\numberwithin{equation}{section}
\theoremstyle{plain}%
\newtheorem{theorem}{Theorem}
\numberwithin{theorem}{section}
\newtheorem{proposition}[theorem]{Proposition}
\newtheorem{example}[theorem]{Example}
\newtheorem{lemma}[theorem]{Lemma}
\newtheorem{corollary}[theorem]{Corollary}
\newtheorem{definition}[theorem]{Definition}
\newtheorem{remark}[theorem]{Remark}
\newtheorem{conjecture}[theorem]{Conjecture}
\newcommand{\C}{\mathbb{C}}
\newcommand{\PP}{\mathbb{P}}
\newcommand{\R}{\mathbb{R}}
\newcommand{\sH}{\mathcal{H}}
\newcommand{\sL}{\mathcal{L}}
\date{}
\begin{document}

\title{\bf Likelihood Geometry}

\author{June Huh and Bernd Sturmfels}

\maketitle

 \begin{abstract}
 \noindent
We study the critical points of monomial functions over an algebraic subset of the 
probability simplex. The number of critical points on the Zariski closure
is a topological invariant of that embedded projective variety, known as its 
maximum likelihood degree. We present an introduction to this theory and
its statistical motivations.  Many favorite objects from
combinatorial algebraic geometry are featured: toric varieties,
$A$-discriminants, hyperplane arrangements,
Grassmannians, and determinantal varieties.
Several new results are included, especially on the likelihood correspondence and its bidegree.
This article represents the lectures given by the second author
at the CIME-CIRM course on Combinatorial Algebraic Geometry
at Levico Terme in June~2013.
         \end{abstract}

 \section*{Introduction}

Maximum likelihood estimation (MLE) is a fundamental computational problem
in statistics, and it has recently been studied with some success from the perspective
of algebraic geometry. In these notes we give an introduction to the geometry
behind MLE for algebraic statistical models for discrete data. As is customary
in algebraic statistics \cite{LiAS}, we shall identify such models with certain algebraic
subvarieties of high-dimensional complex projective spaces.

The article is organized into four sections.
The first three sections correspond to the three lectures given at Levico Terme.
The last section will contain proofs of new results.

In Section 1, we start out with plane curves,
and we explain how to identify the 
relevant punctured Riemann surfaces.
We next present the definitions 
and basic results for likelihood geometry in $\mathbb{P}^n$.
Theorems \ref{thm:finite-to-one}
and \ref{thm:eulerchar} are
concerned with the likelihood correspondence, the sheaf of
differential $1$-forms with logarithmic poles, and the topological Euler characteristic.
The  ML degree of generic complete intersections is given in
Theorem \ref{thm:genericMLdegree}.
Theorem \ref{PositiveData} shows that the likelihood
fibration behaves well over strictly positive data.
Examples of Grassmannians and Segre varieties 
are discussed in detail.
Our treatment of linear spaces in Theorem \ref{LinearCase}
will appeal to readers
interested in matroids and hyperplane arrangements.

Section 2 begins leisurely, with the question
{\em Does watching soccer on TV cause hair loss?} \cite{MSS}.
This leads us to conditional independence and 
low rank matrices. We study  likelihood geometry
of determinantal varieties, culminating in the
duality theorem of Draisma and Rodriguez \cite{DR}.
The ML degrees in Theorems \ref{thm:MLvalues}
and \ref{thm:MLSymmValues} were computed
using the software {\tt Bertini} \cite{Bertini}, underscoring the
benefits of using numerical algebraic geometry for~MLE.
After a discussion of mixture models, highlighting
the distinction between rank and nonnegative rank, we end Section 2
with a review of recent results in \cite{ARSZ} on tensors of nonnegative rank~$2$.

Section 3 starts out with toric models \cite[\S 1.22]{PS}
 and geometric programming \cite[\S 4.5]{BoydVan}.
Theorem \ref{thm:toricMLE} identifies
the ML degree of a toric variety with the Euler characteristic of 
the complement of a  hypersurface in a torus.
Theorem \ref{thm:genericmap} 
furnishes the ML degree of a variety
parametrized by generic polynomials.
Theorem \ref{thm:huh2} characterizes
varieties of ML degree $1$ and it
reveals a beautiful connection to the
$A$-discriminant of \cite{GKZ}.
We introduce the ML bidegree
and the sectional ML degree of
an arbitrary projective variety in $\PP^n$,
and we explain how these two are related.
Section 3 ends with a study of the
operations of  intersection, projection, and restriction
in likelihood geometry. This concerns the algebro-geometric meaning of
the distinction between sampling zeros 
and structural zeros in statistical modeling.

In Section 4 we offer precise definitions and technical explanations
of more advanced concepts from algebraic geometry, including
logarithmic differential forms,  Chern-Schwartz-MacPherson 
classes, and sch\"on very affine varieties. This enables us to present
complete proofs of various results, both old and new,
that are stated in the earlier sections.

\smallskip

We close the introduction with a disclaimer
regarding our overly ambitious title. There are many important topics
in the statistical study of likelihood inference
that should belong to ``Likelihood Geometry''  but are not covered in this article.
Such topics include Watanabe's theory of  singular Bayesian integrals \cite{Wat}, 
differential geometry of likelihood in information geometry \cite{AN},
and real algebraic  geometry of  Gaussian models \cite{Uhl}.
We regret not being able to talk about these topics and many others.
Our presentation here is restricted to  the setting of \cite[\S 2.2]{LiAS}, namely
statistical models for discrete data viewed as projective varieties in $\PP^n$.

\section{First Lecture}

Let us begin our discussion with likelihood on algebraic curves
 in the complex projective plane $\PP^2$.
We fix a system of homogeneous coordinates $p_0,p_1,p_2$  on $\PP^2$.
The set of real points in $\PP^2$ with
${\rm sign}(p_0) = {\rm sign}(p_1) = {\rm sign}(p_2)$ is
identified with the open triangle
$$ \Delta_2 \,\, = \,\, \bigl\{ \,(p_0,p_1,p_2) \in \R^3 \,:\,
p_0,p_1,p_2 > 0 \,\,\,\hbox{and} \,\, \, p_0 +  p_1 + p_2 = 1 \,\bigr\}.$$
Given three positive integers $u_0,u_1,u_2$, the corresponding likelihood function is
$$ \ell_{u_0,u_1,u_2}(p_0,p_1,p_2) \,\, = \,\, \frac{p_0^{u_0} p_1^{u_1} p_2^{u_2}}{(p_0+p_1+p_2)^{u_0+u_1+u_2}}. $$
This defines a rational function on $\PP^2$, and it restricts to a regular function
on $\PP^2 \backslash \mathcal{H}$, where
$$ \mathcal{H} \,\,=\, \bigl\{\, (p_0:p_1:p_2) \in \PP^2 \,: \, p_0p_1p_2(p_0+p_1+p_2) = 0 \,\bigr\} $$
is our arrangement of four distinguished lines.
The likelihood function $\ell_{u_0,u_1,u_2}$ is positive on the triangle $\Delta_2$,
it is zero on the boundary of $\Delta_2$, 
and it attains its maximum at the point 
\begin{equation}
\label{eq:mlexyz} (\hat p_0, \hat p_1, \hat p_2)  \,\, = \,\, \frac{1}{u_0+u_1+u_2}(u_0,u_1,u_2). 
\end{equation}
The corresponding point $(\hat p_0:\hat p_1:\hat p_2)$ is the only 
critical point of the function $\ell_{u_0,u_1,u_2}$ on the 
four-dimensional real manifold $\PP^2 \backslash \mathcal{H}$.
To see this, we consider the {\em logarithmic derivative}
$$ {\rm dlog}(\ell_{u_0,u_1,u_2}) \,\, = \,\,
\biggl(
 \frac{u_0}{p_0} - \frac{u_0+u_1+u_2}{p_0+p_1+p_2}\,,\,\, 
\frac{u_1}{p_1} - \frac{u_0+u_1+u_2}{p_0+p_1+p_2}\,,\,\,
\frac{u_2}{p_2} - \frac{u_0+u_1+u_2}{p_0+p_1+p_2}
\biggr).
$$
We note that this equals $(0,0,0)$
if and only if $(p_0:p_1:p_2)$ is the point $(\hat p_0:\hat p_1:\hat p_2)$  in (\ref{eq:mlexyz}).

\smallskip

Let $X$ be a smooth curve in $\PP^2$ defined by a homogeneous polynomial $f(p_0,p_1,p_2)$.
This curve plays the role of a statistical model,
and our task is to maximize the likelihood function
$\ell_{u_0,u_1,u_2}$ over its set  $X \cap \Delta_2$
of positive real points. To compute that maximum
algebraically, we examine the set of all
critical points of $\ell_{u_0,u_1,u_2}$ on the
complex curve $X \backslash \mathcal{H}$.
That set of critical points is the {\em likelihood locus}. Using Lagrange Multipliers
from Calculus, we see that it  consists of
all points of $X \backslash \mathcal{H}$ such that
${\rm dlog}(\ell_{u_0,u_1,u_2})$ lies in the plane
spanned by ${\rm d}f$ and $(1,1,1)$ in $\C^3$.
Thus, our task is to study the solutions in $\PP^2\backslash \mathcal{H}$
of the equations
\begin{equation}
\label{eq:planefJ}
f(p_0,p_1,p_2) \,=\,0 \qquad \hbox{and} \qquad
{\rm det}
\begin{pmatrix}
  1 & 1 & 1  \smallskip \\
  \frac{u_0}{p_0} & \frac{u_1}{p_1} & \frac{u_2}{p_2}  \medskip \\
   \frac{\partial f}{\partial p_0} &  \frac{\partial f}{\partial p_1} &  \frac{\partial f}{\partial p_2}
   \end{pmatrix}\,=\,\, 0.
 \end{equation}
      Suppose that $X$ has degree $d$. Then,  after clearing denominators, 
      the second equation has degree $d+1$.  
      By B\'ezout's Theorem, we expect the
      likelihood locus to consist of $d(d+1)$ points in $\PP^2 \backslash \mathcal{H}$.
      This is indeed what happens when $f$ is a generic polynomial of degree~$d$.

We define the {\em maximum likelihood degree} (or {\em ML degree}) of our curve $X$
to be the cardinality of the likelihood locus for generic choices of $u_0,u_1,u_2$.
Thus a general plane curve of degree $d$ has ML degree $d(d+1)$.
However, for special curves, the ML degree can be smaller.

\begin{theorem} \label{thm:RiemannSurface}
Let $X $ be a smooth curve
of degree $d$ in $\PP^2$, and 
$a= \#(X \cap \mathcal{H})$ the
 number of its points on the
distinguished arrangement.
Then the ML degree of $X$ equals
$d^2-3d+a$.
\end{theorem}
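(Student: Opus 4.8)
The plan is to realize the likelihood locus of $X$ as the zero set of a single meromorphic $1$-form on the compact Riemann surface $X$, and then to count its zeros with the classical identity that a nonzero meromorphic $1$-form on a genus-$g$ compact Riemann surface has exactly $2g-2$ more zeros than poles (counted with multiplicity). Equivalently, this exhibits the ML degree of the smooth very affine curve $U := X \setminus \mathcal{H}$ as the signed Euler characteristic $-\chi(U)$, which one then computes directly.

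Write $M = u_0 + u_1 + u_2$. By the Lagrange-multiplier description above, a point of $U$ lies in the likelihood locus if and only if the logarithmic $1$-form
$$ \eta_u \;=\; \sum_{i=0}^{2} u_i\,\frac{{\rm d}p_i}{p_i} \;-\; M\,\frac{{\rm d}(p_0{+}p_1{+}p_2)}{p_0{+}p_1{+}p_2} $$
pulls back to zero there. Let $\omega$ be the pullback of $\eta_u$ along $X \hookrightarrow \PP^2$. A smooth plane curve is irreducible, so $X$ is a compact Riemann surface, and by the genus--degree formula its genus is $g = \binom{d-1}{2}$. The form $\omega$ is meromorphic and not identically zero on $X$: it is holomorphic on $U$, where its zero locus is exactly the likelihood locus, and its polar locus is contained in the finite set $X \cap \mathcal{H}$.

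Next I would determine the poles. The four lines of $\mathcal{H}$ are in general position, so each point $P \in X \cap \mathcal{H}$ lies on exactly one or two of them; with weights $(w_0,w_1,w_2,w_3) = (u_0,u_1,u_2,-M)$ and local intersection multiplicities $m_j = i(P;\,X,\,L_j)$ for the lines $L_j$ passing through $P$, the form $\omega$ has at worst a simple pole at $P$, with residue $\sum_{j:\,P\in L_j} m_j w_j$. Each such residue is a fixed nonzero integral linear form in $(u_0,u_1,u_2)$, so for generic $u$ all $a$ of them are nonzero and $\omega$ has precisely $a$ poles, each simple. Therefore $\omega$ has exactly $(2g-2)+a = (d-1)(d-2)-2+a = d^2-3d+a$ zeros counted with multiplicity. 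Since these poles exhaust $X \cap \mathcal{H}$, every zero of $\omega$ lies in $U$; and for generic $u$ the zeros are simple. Hence the likelihood locus is a reduced set of $d^2-3d+a$ points, which is the asserted ML degree. (In topological terms, $\chi(U) = \chi(X) - a = (2-2g) - a$, so $-\chi(U) = d^2-3d+a$.)

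The substantive points are the two genericity claims. That all residues at $X\cap\mathcal{H}$ are nonzero for generic $u$ is immediate, each being a nonzero linear form in the $u_i$. The main obstacle is that $\omega = \omega_u$ has only simple zeros for generic $u$. I would handle this by studying the incidence variety $\mathcal{L}_X = \{(p,u)\in U\times\PP^2 : \omega_u \text{ vanishes at } p\}$: for every $p\in U$ the condition ``$\omega_u$ vanishes at $p$'' is a single linear equation in $(u_0:u_1:u_2)$, and it is nontrivial, since its coefficient vector can vanish only if a nonzero tangent vector of $X$ at $p$ were radial. Thus the first projection realizes $\mathcal{L}_X$ as a $\PP^1$-bundle over $U$, so $\mathcal{L}_X$ is smooth and irreducible of dimension $2$. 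Generic smoothness in characteristic zero, applied to the second projection $\mathcal{L}_X\to\PP^2$, then shows that over a dense open set of $u$ the fiber is a reduced finite set, i.e.\ $\omega_u$ has only simple zeros; their number is the ML degree. This is the one-dimensional instance of the general finiteness statement for the likelihood correspondence.
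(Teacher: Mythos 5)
Your proposal is correct and follows essentially the paper's own route: the paper obtains Theorem \ref{thm:RiemannSurface} as the curve case of Theorem \ref{thm:eulerchar}, whose proof in Section \ref{Proofs} is exactly your argument in arbitrary dimension --- critical points as zeros of the logarithmic form ${\rm dlog}(\ell_u)$, nonvanishing of its residues (orders) along the boundary divisors for generic $u$ (Lemmas \ref{A}--\ref{C}), and the logarithmic Poincar\'e--Hopf count, which in dimension one is precisely your zero-minus-pole identity $2g-2$; your $\PP^1$-bundle plus generic-smoothness step likewise mirrors the proof of Theorem \ref{thm:finite-to-one}. So you have carried out the one-dimensional instance of the general proof self-containedly, and the genericity checks (nonzero residues at the $a$ boundary points, simple zeros) are handled correctly.
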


This  is a very special case of
Theorem  \ref{thm:eulerchar} which identifies the ML degree with the signed
Euler characteristic of $X \backslash \mathcal{H}$.
For a general curve of degree $d$ in $\PP^2$, we have
$ a = 4 d$, and so $d^2 - 3d + a = d(d+1)$
as predicted. However, the number $a$ of points in $X \cap \mathcal{H}$ can drop:

\begin{example} 
\label{ex:linesinplane}
\rm Consider the case $d=1$ of lines.
A generic line has ML degree $2$.
The line $X = V( p_0 + c p_1 )$ has ML degree $1$
provided $c \not\in \{0,1\}$.
The special line $X = V(p_0 + p_1 )$ has ML degree $0$:
 (\ref{eq:planefJ}) has no solutions
on $X \backslash \mathcal{H}$ unless $u_0+u_1 = 0$.
In the three cases, $X \backslash \mathcal{H}$ is the
Riemann sphere $\PP^1$ with four, three, or two points removed.
\hfill $ \diamondsuit $
\end{example}

\begin{example} 
\label{ex:hardyweinberg}
\rm Consider the case $d=2$ of quadrics.
A general quadric has ML degree $6$.
The {\em Hardy-Weinberg curve}, which plays
a fundamental role in population genetics, is given~by
$$ 
f(p_0,p_1,p_2) \,\, = \,\,
{\rm det}
\begin{pmatrix}
2p_0 & p_1 \\
 p_1 & 2p_2
\end{pmatrix} \,\, = \,\, 4 p_0p_2  - p_1^2.
$$
The curve has only three points on the distinguished arrangement:
$$ X \cap \mathcal{H} \,\, = \,\,
\bigl\{ \,(1:0:0), \, (0:0:1), \,  (1:-2:1) \,\bigr\} . $$
Hence the ML degree of the Hardy-Weinberg curve equals $1$.
This means that the maximum likelihood estimate (MLE) is
a rational function of the data. Explicitly, the MLE equals
\begin{equation} \label{eq:hardyweinbergMLE}
( \hat p_0 , \hat p_1, \hat p_2) \,\, = \,\,  \frac{1}{ 4 (u_0+u_1+u_2)^2} \bigl( \,
(2u_0+u_1)^2\, , \,2(2u_0+u_1)(u_1+2u_2)\,, \,(u_1+2u_2)^2\,\bigr).
\end{equation}

In applications, the Hardy-Weinberg curve arises via its parametric representation
\begin{equation}
\label{eq:HW}
\begin{matrix} p_0(s) &=& s^2 \\
                               p_1(s) & = & 2s(1-s) \\
                                p_2(s) & = & (1-s)^2 \end{matrix} 
\end{equation}                                
Here the parameter $s$ is the probability that a biased
coin lands on tails. If we toss that same biased coin twice,
then the above formulas represent the following probabilities:
$$
\begin{matrix}
p_0(s) &  = & \hbox{ probability of 0 heads} \\
p_1(s) & =  & \hbox{ probability of 1 head} \\
p_2(s) & = & \hbox{ probability of 2 heads} \\
\end{matrix}
$$
Suppose now that the experiment of tossing the coin twice
is repeated $N$ times. We record the following counts,
where  $N = u_0+u_1+u_2$ is the sample size of our repeated experiment:
$$
\begin{matrix}
u_0 &=& \hbox{ number of times 0 heads were observed} \\
u_1 &= & \hbox{  number of times 1 head was observed} \\
u_2 &=& \hbox{ number of times 2 heads were observed}
\end{matrix} \qquad \qquad
$$
The MLE problem is to 
estimate the unknown parameter $s$ by maximizing
$$ \ell_{u_0,u_1,u_2}\,\, = \,\, p_0(s)^{u_0} p_1(s)^{u_1} p_2(s)^{u_2} \,\,=\,\,
2^{u_1} s^{2u_0+u_1} (1-s)^{u_1+2u_2}.
$$
The unique solution to this optimization problem is
 $$ \hat s \quad = \quad \frac{2u_0+u_1}{2u_0+2u_1+2u_2} .$$
 Substituting this expression into (\ref{eq:HW}) gives the estimator
 $\bigl(p_0(\hat s), p_1(\hat s), p_2(\hat s)\bigr) $
 for the three probabilities in our model. The
 resulting rational function  coincides with 
(\ref{eq:hardyweinbergMLE}). \hfill $\diamondsuit$
\end{example}

The ML degree is also defined when the given curve $X \subset \mathbb{P}^2$ is not smooth, but it counts critical points of $\ell_u$ 
only in the regular locus of $X$. Here is an example to illustrate this.

\begin{example} \label{ex:nodecusp} \rm
A general cubic curve $X$ in $\PP^2$ has ML degree $12$.
Suppose now that $X$ is a cubic which meets $\mathcal{H}$ transversally
but has one isolated singular point in $\PP^2 \backslash \mathcal{H}$.
If the singular point is a {\em node} then the ML degree of $X$ is $10$,
and if the singular point is a {\em cusp} then the ML degree of $X$ is $9$.
The ML degrees are found by saturating the equations in
(\ref{eq:planefJ}) with respect to the homogenous ideal of the singular point.
\hfill $\diamondsuit$ 
\end{example}

\smallskip

Moving beyond likelihood geometry in the plane,
we shall introduce our objects in any dimension.
We fix the complex projective space $\PP^n$ with  coordinates $p_0,p_1,\ldots,p_n$,
representing probabilities. We summarize the observed data  in a vector
 $u = (u_0,u_1,\ldots,u_n) \in \mathbb{N}^{n+1}$,
 where $u_i$ is the number of samples in state $i$. The likelihood function
  on $\PP^n$ given by $u$ equals
$$ \ell_u \quad = \quad 
\frac{ p_0^{u_0} p_1^{u_1} \cdots p_n^{u_n} }{(p_0+p_1+ \cdots +p_n)^{u_0+u_1+\cdots +u_n}} .$$
 The unique critical point of this rational function on $\PP^n$ is the data point itself:
$$ (u_0:u_1: \cdots : u_n ). $$
Moreover, this point is the global maximum of the likelihood function 
$\ell_u$ on the probability simplex 
$\Delta_n$. Throughout, we identify $\Delta_n$ with the set of all
positive real points in $\PP^n$.

The linear forms in $\ell_u$ define an arrangement
$\mathcal{H}$ of $n+2$ distinguished hyperplanes
in $\PP^n$. The differential of the logarithm of the likelihood function 
is the vector of rational functions
\begin{equation}
\label{eq:nablalog}
{\rm dlog}(\ell_u) \,=\,
\bigl(
\frac{u_0}{p_0},
\frac{u_1}{p_1},\ldots,
\frac{u_n}{p_n} \bigr) 
 - \frac{u_+}{p_+}\cdot (1,1,\ldots,1).
\end{equation}
Here ${p_+} = \sum_{i=0}^n p_i$ and
 ${u_+} = \sum_{i=0}^n u_i$. The vector (\ref{eq:nablalog})
 represents a section of the sheaf 
of differential $1$-forms on $\PP^n$ that have
 logarithmic singularities along $\mathcal{H}$.
This sheaf is denoted
$$ \Omega_{\PP^n}^1({\rm log}(\mathcal{H})). $$

Our aim is to study the restriction of $\ell_u$ to
 a closed subvariety $X  \subseteq \PP^n$.
We will assume that $X$ is
defined over the real numbers, irreducible, and not contained in $\mathcal{H}$.
Let $X_{\rm sing}$ denote the singular locus of $X$, and
$\,X_{\rm reg}$ denote $X \backslash X_{\rm sing}$.
When $X$ serves as a statistical model, the goal is to maximize
the rational function $\ell_u$ on the semialgebraic set $X \cap \Delta_n$. To solve this problem
algebraically, we determine all critical points 
of the log-likelihood function ${\rm log}(\ell_u)$ on
the complex variety  $X$. 
Here we must exclude points that are
singular or lie in $\mathcal{H}$.

\begin{definition} \label{def:MLD_and_LC} \rm
The {\em maximum likelihood degree} of $X$ is the number of complex critical points
of the function $\ell_u$ on $X_{\rm reg}\backslash \mathcal{H}$,
for generic data $u$.
The  {\em likelihood correspondence} $\mathcal{L}_X$ is the
universal family of these critical points. To be precise, $\mathcal{L}_X$ is the
 closure
in $\PP^n \times \PP^n$ of
$$ \bigl\{ (p,u)\,:\,
p \in X_{\rm reg} \backslash \mathcal{H} \,\,\hbox{and} \,\,\,
{\rm dlog}(\ell_u) \,\, \hbox{vanishes at $p$} \bigr\} .$$
\end{definition}

\smallskip

We sometimes write $\,\PP^n_p \times \PP^n_u\,$ for $\,\PP^n \times \PP^n\,$
to highlight that the first factor is the probability space, with coordinates $p$,
while the second factor is the data space, with coordinates $u$.
The first part of the following result appears in \cite[\S 2]{Huh1}.
A precursor was \cite[Proposition 3]{HKS}.

\begin{theorem} \label{thm:finite-to-one}
The likelihood correspondence
$\mathcal{L}_X$ of any irreducible
subvariety $X $ in   $ \PP^n_p$ is an irreducible variety of dimension $n$ in 
the product $\,\PP^n_p \times \PP^n_u$.
The map ${\rm pr}_1:\mathcal{L}_X \rightarrow \PP^n_p$ 
is a projective bundle over $X_{\rm reg}\backslash \mathcal{H}$, and
the map ${\rm pr}_2:\mathcal{L}_X \rightarrow \PP^n_u$ is 
generically finite-to-one.
\end{theorem}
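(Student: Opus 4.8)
The plan is to analyze $\mathcal{L}_X$ fiberwise over its first factor. Write $d=\dim X$ and set $U=X_{\rm reg}\setminus\mathcal{H}$; since $X$ is irreducible and not contained in $\mathcal{H}$, this is a nonempty open, hence irreducible, subvariety of $X$ of dimension $d$. For $p\in U$ the rational section ${\rm dlog}(\ell_u)$ of $\Omega^1_{\PP^n}(\log\mathcal{H})$ is regular at $p$ (away from $\mathcal{H}$ this sheaf agrees with the ordinary cotangent sheaf $\Omega^1_{\PP^n}$), and it depends $\C$-linearly on $u\in\C^{n+1}$, because $u_+=\sum_i u_i$ so ${\rm dlog}(\ell_u)=\sum_i u_i\bigl(d\log p_i-d\log p_+\bigr)$. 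The condition in Definition~\ref{def:MLD_and_LC} that ${\rm dlog}(\ell_u)$ ``vanish at $p$'' is the Lagrange-multiplier condition that its restriction to the tangent space $T_pX$ be zero, so it defines a linear map $\alpha_p\colon\C^{n+1}\to(T_pX)^{\vee}$, and the part of $\mathcal{L}_X$ lying over $p$ is the projective linear space $\PP(\ker\alpha_p)\subseteq\PP^n_u$.

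First I would show that $\alpha_p$ is surjective for every $p\in U$. It factors as $\C^{n+1}\xrightarrow{\,\gamma_p\,}(T_p\PP^n)^{\vee}\twoheadrightarrow(T_pX)^{\vee}$, where $\gamma_p(u)={\rm dlog}(\ell_u)|_p$ and the second map is surjective since $p$ is a smooth point of $X$; thus it is enough to see that $\gamma_p$ is surjective. In the affine chart $p_0\neq0$ with coordinates $z_i=p_i/p_0$ one computes $\ell_u=z_1^{u_1}\cdots z_n^{u_n}/(1+z_1+\cdots+z_n)^{u_+}$, so in the basis $dz_1,\dots,dz_n$ the map $\gamma_p$ is represented by the matrix whose $(j,i)$ entry equals $\delta_{ij}/z_j-1/z_+$ for $i\geq1$ and $-1/z_+$ for $i=0$, with $z_+=1+\sum_j z_j$. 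By the matrix determinant lemma its minor on columns $1,\dots,n$ has determinant $(z_1\cdots z_n\,z_+)^{-1}$, which is nonzero on $U$ since none of the $p_i$ nor $p_+$ vanishes there (if $p$ lies outside the chart $p_0\neq0$ one argues in a chart $p_k\neq0$ instead). Hence $\gamma_p$, and therefore $\alpha_p$, is surjective, so $\dim\ker\alpha_p=n+1-d$ and every fiber of ${\rm pr}_1$ over $U$ is a $\PP^{n-d}$.

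Next I would note that $\alpha_p$ has constant rank $d$ as $p$ varies over $U$, so the kernels $\ker\alpha_p$ assemble into a subbundle of the trivial bundle $\mathcal{O}_U^{n+1}$, whose projectivization $\mathcal{L}^\circ$ is a Zariski-locally-trivial $\PP^{n-d}$-bundle over $U$. By construction $\mathcal{L}^\circ=\{(p,u):p\in U,\ {\rm dlog}(\ell_u)\text{ vanishes at }p\}$ is a locally closed subvariety of $\PP^n_p\times\PP^n_u$ whose closure is, by definition, $\mathcal{L}_X$. Since $U$ is irreducible, $\mathcal{L}^\circ$ is irreducible of dimension $d+(n-d)=n$, and hence so is $\mathcal{L}_X$. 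As $\mathcal{L}^\circ$ is closed in the open set $(X_{\rm reg}\setminus\mathcal{H})\times\PP^n_u$, it equals ${\rm pr}_1^{-1}(U)$, which proves the projective-bundle claim. Finally ${\rm pr}_2\colon\mathcal{L}_X\to\PP^n_u$ is a morphism between varieties of dimension $n$: if it is dominant it is automatically generically finite, and if not, a general point of $\PP^n_u$ has empty fiber; in either case the general fiber is finite.

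The hard part --- really the only substantive point --- will be the surjectivity of $\gamma_p$, equivalently the assertion that the logarithmic forms $d\log(p_i/p_+)$ generate $\Omega^1_{\PP^n}(\log\mathcal{H})$ at every point off $\mathcal{H}$. Once that determinant is computed, the irreducibility, the dimension count, and the bundle structure over $U$ are all formal; the one place demanding a little care is the constant-rank observation, which is what promotes ``every fiber of ${\rm pr}_1$ over $U$ is a $\PP^{n-d}$'' to ``${\rm pr}_1$ is a projective bundle over $U$.''
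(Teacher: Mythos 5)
Your proof is correct and takes essentially the same route as the paper: both realize $\mathcal{L}_X$ over $X_{\rm reg}\setminus\mathcal{H}$ as the projectivization of the kernel of the surjective bundle map $u \mapsto \sum_i u_i\,{\rm dlog}(p_i/p_+)$ into the cotangent sheaf of $X_{\rm reg}\setminus\mathcal{H}$, and then deduce irreducibility, dimension $n$, the bundle structure, and generic finiteness of ${\rm pr}_2$ by the same dimension count. The only cosmetic difference is that you establish surjectivity by an explicit determinant computation in an affine chart, whereas the paper dualizes the injectivity of the derivative of the closed embedding $X\setminus\mathcal{H}\hookrightarrow(\C^*)^{n+1}$.
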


See Section \ref{Proofs} for a proof. The degree of the map ${\rm pr}_2:\mathcal{L}_X \rightarrow \PP^n_u$ to data space is the ML degree of $X$.
This number has a topological interpretation as an Euler characteristic,
provided suitable assumptions on $X$ are being made.
The relationship between the homology of a manifold
and critical points of a suitable function on it is the topic
of Morse theory.

The study of ML degrees was started in \cite[\S 2]{CHKS} by developing
 the connection to  the sheaf
$\,\Omega^1_X({\rm log}(\mathcal{H}))$ of differential $1$-forms on $X$
with logarithmic poles along $\mathcal{H}$.
It was shown in \cite[Theorem 20]{CHKS} that
the ML degree of $X$ equals the signed topological Euler characteristic 
\[
(-1)^{\dim X} \cdot \chi(X \backslash \mathcal{H}),
\] 
provided
$X$ is smooth
and the intersection $\mathcal{H} \cap X$ defines a normal crossing divisor in 
$X \subseteq \PP^n$.
A major drawback of that early result was that the hypotheses are so restrictive that they
essentially never hold for varieties $X$ that arise from statistical models used in practice.
From a theoretical point view, this issue can be addressed by passing to a
resolution of singularities.
However, in spite of existing algorithms for resolution in characteristic zero,
these algorithms do not scale to problems of the sizes of interest in
algebraic statistics. Thus, whatever
computations we wish to do should not be based on resolution of singularities.

The following result due to \cite{Huh1} gives the same topological 
interpretation of the ML degree. The hypotheses here
are much more realistic and inclusive than those in
\cite[Theorem 20]{CHKS}.

\begin{theorem} \label{thm:eulerchar}
If the very affine variety $X \backslash \mathcal{H}$ is smooth
 of dimension $d$,
then the ML degree of $X$ equals the signed topological
Euler characteristic of $(-1)^d \cdot   \chi(X \backslash \mathcal{H})$.
\end{theorem}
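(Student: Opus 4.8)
The plan is to reduce the theorem to a statement about a general section of a vector bundle on the very affine variety $U := X \backslash \mathcal{H}$, and then to apply a Gauss--Bonnet--type formula identifying the top Chern class with the Euler characteristic. First I would observe that, by Theorem~\ref{thm:finite-to-one}, for generic $u$ the critical points of $\ell_u$ on $X_{\rm reg}\backslash\mathcal{H}$ are the zeros of the section $\mathrm{dlog}(\ell_u)$ of the sheaf $\Omega^1_X(\log \mathcal{H})$, which is locally free of rank $d$ on the smooth variety $U$. The key point is that the one-forms $\mathrm{dlog}(p_i/p_+)$ for $i=0,\dots,n$ span $\Omega^1_{\PP^n}(\log\mathcal{H})$ globally (they satisfy only the single relation coming from $p_0+\cdots+p_n = p_+$), so their restrictions span $\Omega^1_X(\log\mathcal{H})$ at every point of $U$; hence $\mathrm{dlog}(\ell_u) = \sum u_i\, \mathrm{dlog}(p_i/p_+)$ is a \emph{generic} global section of a globally generated vector bundle on $U$. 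By a Bertini-type argument its zero scheme is finite and reduced, and its cardinality equals the degree of the top Chern class $c_d(\Omega^1_U(\log\mathcal{H}))$ evaluated against (a compactification of) $U$ — equivalently, the ML degree equals $(-1)^d$ times this Chern number.

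Next I would identify that Chern number with the signed Euler characteristic. The natural tool is the theory of Chern--Schwartz--MacPherson classes (which the introduction promises to develop in Section~4): for the \emph{very affine} variety $U$, one has a compactification and a description of the logarithmic cotangent bundle whose top Segre/Chern class computes $\chi(U)$. Concretely, the statement to invoke is that for a smooth variety $U$ admitting a closed embedding into a torus $(\C^*)^N$ (which $U = X\backslash\mathcal{H}$ does, via the coordinates $p_i/p_+$ and their inverses on the complement of the arrangement), the number of zeros of a generic section of $\Omega^1_U$ extended logarithmically equals $(-1)^{\dim U}\chi(U)$. I would set this up by choosing a smooth compactification $\overline{U}$ of $U$ with simple normal crossing boundary $D = \overline{U}\setminus U$, note that $\Omega^1_{\overline U}(\log D)$ restricts to the relevant bundle on $U$, and use the classical fact (Norimatsu / Deligne) that $c_{\rm top}(\Omega^1_{\overline U}(\log D)) = (-1)^d\chi(U)\cdot[\mathrm{pt}]$ after the standard adjunction-exact-sequence bookkeeping that relates $\chi(\overline U)$, $\chi(D)$, and the Chern classes of the twisted bundle.

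The main obstacle I anticipate is not the Chern-class formula itself but verifying that $\mathrm{dlog}(\ell_u)$, for generic $u$, really does extend to a section of $\Omega^1_{\overline U}(\log D)$ that has \emph{no zeros on the boundary} $D$ and vanishes transversally on $U$ — in other words, that no critical points escape to infinity or to the boundary divisor, and that the count is not thrown off by excess intersection along $D$. This is where the smoothness hypothesis on $X\backslash\mathcal{H}$ is essential and must be used carefully: one needs that the residues of the logarithmic form along each boundary component are generic linear functions of $u$, hence nonvanishing for generic $u$, which forces the zero locus of the section to stay inside $U$. Handling this cleanly probably requires either Huh's argument via the characteristic cycle / index of the $1$-form, or an explicit local analysis near $D$ combined with a dimension count showing the ``bad'' locus of $u$ is a proper subvariety of $\PP^n_u$. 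Once that is in place, combining the Bertini genericity, the boundary-avoidance, and the logarithmic Gauss--Bonnet formula yields ML degree $= (-1)^d\chi(X\backslash\mathcal{H})$. I would defer the technical parts of this boundary analysis and the precise definition of the CSM/logarithmic machinery to Section~\ref{Proofs}, citing \cite{Huh1}, and present here only the structure of the argument.
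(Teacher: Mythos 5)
Your proposal takes essentially the same route as the paper's proof in Section~\ref{Proofs}: pass to a smooth compactification of $X \backslash \mathcal{H}$ with simple normal crossing boundary $D$, check via the orders ${\rm ord}_{D_j}(\ell_u)=\sum_i u_i\,{\rm ord}_{D_j}(\varphi_i)$ --- linear forms in $u$ that are not identically zero --- that ${\rm dlog}(\ell_u)$ has no zeros on the boundary for generic $u$ (this is exactly what Lemmas~\ref{A}--\ref{C} establish), and then apply the logarithmic Poincar\'e--Hopf formula $\int c_d\bigl(\Omega^1(\log D)\bigr)=(-1)^d\chi$. The boundary analysis you defer is precisely the content of the paper's lemmas, so your outline is correct and matches the published argument.
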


The term {\em very affine variety} refers to a closed subvariety of some algebraic torus
$(\mathbb{C}^*)^m$.
Our ambient space $\mathbb{P}^n \backslash \sH$ is a very affine variety because it has a closed embedding
\[
\mathbb{P}^n \backslash \sH \longrightarrow (\mathbb{C}^*)^{n+1},
 \qquad (p_0:\cdots:p_n) \longmapsto \big(\frac{p_0}{p_+},\ldots,\frac{p_n}{p_+}\big).
\]
The study of such varieties is foundational for {\em tropical geometry}.
The special case when $X \backslash \sH$ is a Riemann surface with $a$ punctures,
arising from a curve in $ \PP^2$,
was seen in  Theorem \ref{thm:RiemannSurface}. 
We remark that Theorem \ref{thm:eulerchar} can be deduced from works of Gabber-Loeser \cite{Gabber-Loeser} and Franecki-Kapranov \cite{Franecki-Kapranov} on perverse sheaves on algebraic tori.

The smoothness hypothesis is essential for  Theorem \ref{thm:eulerchar} to hold.
If $X$ is singular then, generally, neither 
$X \backslash \mathcal{H}$ nor $ X_{\rm reg} \backslash \mathcal{H}$
has its signed Euler characteristic equal to the ML degree of $X$. 
Varieties $X$ that demonstrate this are the two singular cubic curves
in Example \ref{ex:nodecusp}.

\begin{conjecture} \label{conj:eulerpositive}
For any projective variety $X \subseteq \mathbb{P}^n$ of dimension $d$, not contained in $\sH$,
\[
(-1)^d \cdot \chi(X \backslash \sH) \,\,\ge \,\, {\rm MLdegree}\,(X).
\]
In particular, the signed topological Euler characteristic $(-1)^d \cdot
 \chi(X \backslash \sH)$ is nonnegative.
\end{conjecture}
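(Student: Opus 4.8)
We do not prove this conjecture; here we outline the line of attack we find most promising. The idea is to bootstrap from the smooth case, where Theorem~\ref{thm:eulerchar} already gives \emph{equality}, and to show that passing to a singular $X$ can only decrease the right-hand side. Write $U = X \setminus \mathcal{H}$ and $U^{\circ} = X_{\mathrm{reg}} \setminus \mathcal{H}$, and fix a Whitney stratification of $X$ that refines $X_{\mathrm{sing}}$ and is compatible with $\mathcal{H}$, so that $U^{\circ}$ is the open dense stratum of $U$. The first step is to encode both quantities as intersection numbers in one object. The likelihood correspondence $\mathcal{L}_X \subseteq \PP^n_p \times \PP^n_u$ is the closure of the relative conormal variety of $U^{\circ}$ with respect to the logarithmic structure along $\mathcal{H}$, and $\mathrm{MLdegree}(X)=\deg(\mathrm{pr}_2|_{\mathcal{L}_X})$ is the pairing of the cycle $[\mathcal{L}_X]=[\overline{\mathrm{Con}(U^{\circ})}]$ against the graph of a generic logarithmic $1$-form $\mathrm{dlog}(\ell_u)$. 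On the other hand, by an index theorem of Kashiwara type in the very affine setting used in \cite{Gabber-Loeser, Franecki-Kapranov}, $(-1)^{d}\chi(U)$ is the pairing of the \emph{full} characteristic cycle $\mathrm{CC}(\mathbb{Q}_U)=[\overline{\mathrm{Con}(U^{\circ})}]+\sum_{S}m_S\,[\overline{\mathrm{Con}(S)}]$ against the same generic form, the sum running over the lower strata $S\subsetneq U$ with integer multiplicities $m_S$ of local-Euler-obstruction type.

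Since the leading term $[\overline{\mathrm{Con}(U^{\circ})}]$ occurs with coefficient $1$ in both cycles, the two pairings agree there, and
\[
(-1)^{d}\chi(U)\;-\;\mathrm{MLdegree}(X)\;=\;\sum_{S\subsetneq U}m_S\cdot\bigl(\text{generic index of }S\bigr),
\]
an expression supported entirely over $X_{\mathrm{sing}}\setminus\mathcal{H}$. The plan is then to prove that this correction is nonnegative, exploiting positivity on algebraic tori: one would like to recognize $(-1)^{d}\chi(U)-\mathrm{MLdegree}(X)$ as $(-1)^{d}\chi$ of an \emph{effective} combination of perverse sheaves on $(\C^*)^{n+1}$ supported on $X_{\mathrm{sing}}\setminus\mathcal{H}$ --- for instance a combination of intersection-cohomology sheaves of stratum closures --- and then invoke the theorem of Franecki--Kapranov that a perverse sheaf on a semiabelian variety has nonnegative Euler characteristic, equivalently that its effective characteristic cycle pairs nonnegatively with a generic logarithmic form. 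Concretely this is an induction on $\dim X_{\mathrm{sing}}$: remove the open stratum, apply the statement to the lower-dimensional very affine variety $X_{\mathrm{sing}}\setminus\mathcal{H}$, and check that the bookkeeping of local Euler obstructions never overshoots.

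The step I expect to be the main obstacle is exactly this last positivity/decomposition. The constant sheaf $\mathbb{Q}_U$ is not perverse once $X$ is singular, so the Franecki--Kapranov inequality does not apply on the nose; one must break $\mathrm{CC}(\mathbb{Q}_U)$ into perverse constituents with \emph{signs under control}, and it is precisely here that Euler characteristics of complex links at the deeper strata can carry mixed signs and spoil the estimate. Even the weaker ``in particular'' assertion $(-1)^{d}\chi(U)\ge 0$ is not immediate from the perverse-sheaf statement, since $\chi(U)$ and the intersection-cohomology Euler characteristic differ by singular-stratum corrections; a safer route to that special case is a Lefschetz-pencil induction --- slice $U$ by the generic fibre of a generic character $(\C^*)^{n+1}\to\C^*$, use $\chi(\C^*)=0$ to write $\chi(U)$ as a sum of vanishing-cycle Euler characteristics at the finitely many critical values, and control their signs by induction on dimension. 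A complementary source of inductive leverage is the sectional ML degree and the ML bidegree of $\mathcal{L}_X$ from Section~3, whose behaviour under generic hyperplane sections should mirror that of $\chi(U)$ under the same sections, giving a second handle on the correction term.
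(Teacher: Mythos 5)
First, a point of calibration: the statement you are addressing is stated in the paper as a \emph{conjecture}, and the paper contains no proof of it in general; the only case settled there is $d=1$, via a short argument at the end of Section~\ref{Proofs} (resolve the curve, note that the zero scheme of the section ${\rm dlog}(\ell_u)$ of $\Omega^1_{\widetilde X}(\log D)$ has length at least the ML degree, apply the logarithmic Poincar\'e--Hopf theorem, and use that $-\chi$ can only increase when passing from the normalization back to the curve). Your text is explicitly an outline rather than a proof, and judged as such it is a reasonable and well-informed plan: the reduction of both sides to intersection numbers against a generic logarithmic form is correct in spirit (this is exactly the mechanism behind Theorem~\ref{thm:eulerchar}, which the paper attributes to \cite{Huh1} and notes can be deduced from \cite{Gabber-Loeser, Franecki-Kapranov}), the ML degree is indeed the pairing of $[\sL_X]=[\overline{{\rm Con}(X_{\rm reg}\backslash\sH)}]$ with a generic fiber of ${\rm pr}_2$ (the boundary part of $\sL_X$ maps to a proper subvariety of $\PP^n_u$, so it does not contaminate the count), and the discrepancy is indeed carried by characteristic-cycle contributions of strata inside $X_{\rm sing}\backslash\sH$, with multiplicities of Euler-obstruction type. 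Your diagnosis of the obstacle is also the right one: $\mathbb{Q}_U$ is not perverse for singular $X$, those multiplicities can have either sign, and the Franecki--Kapranov nonnegativity applies only to perverse objects.

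The genuine gap, then, is that the step you defer --- nonnegativity of the correction term, i.e.\ exhibiting $(-1)^d\chi(U)-{\rm MLdegree}(X)$ as the (signed) Euler characteristic of an effective perverse combination supported on $X_{\rm sing}\backslash\sH$ --- is not a technical bookkeeping issue but is precisely the content of the conjecture beyond the smooth case; no argument is offered for it, and the proposed induction on $\dim X_{\rm sing}$ presupposes sign control of complex-link contributions that is exactly what fails in naive attempts. Likewise, the Lefschetz-pencil sketch for the weaker assertion $(-1)^d\chi(U)\ge 0$ again requires controlling signs of vanishing-cycle contributions at singular points, which is the same difficulty in different clothing. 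Two concrete pointers from the paper that would sharpen your plan: (i) the curve case shows a cleaner mechanism than strata bookkeeping --- compare the zero scheme of ${\rm dlog}(\ell_u)$ on a resolution with the critical points downstairs, as in the remark following the proof of Theorem~\ref{PositiveData}; and (ii) the cycle-level refinement of the conjecture is already visible in Theorem~\ref{proCSM2} and Example~\ref{ex:manyclasses}, where the coefficients of $B_X(p,u)$ are observed (conjecturally) to be bounded by the absolute values of the coefficients of $c_{SM}(\mathbf{1}_{X\backslash\sH})$; proving that coefficientwise statement, e.g.\ via Aluffi's formalism \cite{Alu, AluLectures} combined with \cite{Huh1}, would imply the inequality you want as its leading coefficient and is probably the more structured target to aim at.
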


Analogous conjectures can be made
in the slightly more general setting of \cite{Huh1}. In particular, we conjecture that the inequality
\[
(-1)^d \cdot \chi(V) \,\ge \, 0
\]
holds for any closed $d$-dimensional subvariety $V \subseteq (\mathbb{C}^*)^{m}$.

\begin{remark} \rm
We saw in Example \ref{ex:linesinplane}   that
the ML degree of a projective variety $X$
can be $0$. In all situations
of statistical interest, the variety $X \subset \PP^n$ 
intersects the open simplex $\Delta_n$ in a subset that is Zariski dense in $X$.
If that intersection is smooth  then  $ {\rm MLdegree}(X) \geq 1$. In fact,
arguing as in \cite[Proposition 11]{CHKS}, it can be shown that
for smooth $X$,
\[
{\rm MLdegree}\,(X) \,\,\ge \,\,\#(\text{bounded regions of $X_{\mathbb{R}} \backslash \mathcal{H}$}).
\]
Here a {\em bounded region} is a connected component of
the semialgebraic set $X_{\mathbb{R}} \backslash \mathcal{H}$
whose classical closure is disjoint from 
the distinguished hyperplane $V(p_+)$  in $\PP^n$.

If $X$ is singular then the number of bounded regions of
 $X_{\mathbb{R}} \backslash \mathcal{H}$
can exceed ${\rm MLdegree}\,(X)$.
For instance, let $X\subset \PP^2$ be the cuspidal cubic curve defined by
\[
(p_0{+}p_1{+}p_2)(7 p_0 {-} 9 p_1 {-} 2 p_2)^2 =
(3p_0{+}5p_1{+}4p_2)^3.
\]
The real part
 $X_{\mathbb{R}} \backslash \mathcal{H}$
consists of $8$ bounded and $2$ unbounded regions, but the ML degree of $X$ is $7$.
The bounded region that contains the cusp $(13:17:-31)$  
has no other critical points for $\ell_u$.
\hfill $ \diamondsuit $
\end{remark}

In what follows we present instances that illustrate the
computation of the ML degree. We begin
with the case of
{\em generic complete intersections}.
Suppose that $X \subset \mathbb{P}^n$ is a complete intersection defined by
$r$ generic homogeneous polynomials $g_1,\ldots,g_r$ of degrees $d_1,d_2,\ldots,d_r$.

\begin{theorem} \label{thm:genericMLdegree}
The ML degree of $X$ equals $D d_1 d_2 \cdots d_r $, where
\begin{equation}
\label{eq:upperboundformula}
 D \,\quad = \, 
\sum_{i_1+i_2 + \cdots + i_r \leq n-r}  \!\!\!\!\!
d_1^{i_1} d_2^{i_2} \cdots d_r^{i_r}. 
\end{equation}
\end{theorem}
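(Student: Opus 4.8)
The plan is to use Theorem~\ref{thm:eulerchar}, reducing the computation of the ML degree of $X$ to the computation of the signed topological Euler characteristic $(-1)^d \chi(X \backslash \mathcal{H})$, where $d = n - r$. First I would check that the hypothesis of Theorem~\ref{thm:eulerchar} is satisfied: for generic $g_1,\dots,g_r$, the variety $X \backslash \mathcal{H}$ is smooth of dimension $d$. This follows from Bertini's theorem applied on the very affine variety $\PP^n \backslash \mathcal{H}$; genericity of the $g_i$ (including genericity of the coefficients, not merely the degrees) guarantees that the intersection with the $n+2$ hyperplanes in $\mathcal{H}$ and with $X_{\mathrm{sing}}$ behaves as expected, so in fact $X$ is a smooth complete intersection meeting each component of $\mathcal{H}$ transversally.

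Next I would compute $\chi(X \backslash \mathcal{H})$ via an inclusion-exclusion over the distinguished arrangement. Write $X_i = X \cap V(p_i)$ for $i = 0,\dots,n$ and $X_{n+1} = X \cap V(p_+)$; these are again generic complete intersections inside the relevant hyperplane, and more generally any intersection $X_{i_1} \cap \cdots \cap X_{i_k}$ is a generic complete intersection in a linear subspace of codimension $k$ (using that $p_+$ is a generic linear form relative to the coordinate hyperplanes). By additivity of the Euler characteristic in the constructible/compactly-supported sense (which agrees with the ordinary $\chi$ for complex algebraic varieties), $\chi(X \backslash \mathcal{H})$ is an alternating sum of the $\chi$ of $X$ and of all these linear sections. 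So the whole problem reduces to a single ingredient: the Euler characteristic of a smooth complete intersection of multidegree $(d_1,\dots,d_r)$ in a projective space $\PP^m$, as a function of $m$ and the $d_j$. This is classical — it is a polynomial obtained from the Chern class computation via the adjunction/normal bundle sequence, encoded in the generating function $\prod_j \frac{d_j t}{1 + d_j t} \cdot \frac{1}{(1+t)^{?}}$ type expression — and I would either cite it or derive it quickly from $c(T_X) = c(T_{\PP^m})/\prod_j (1 + d_j H)$ and $\chi(X) = \int_X c_{\dim X}(T_X)$.

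The main obstacle, and the only real work, is the bookkeeping: assembling the alternating sum of these Euler characteristics over all $2^{n+2}$ subsets of the distinguished hyperplanes and showing the total collapses to the clean closed form $D \, d_1 d_2 \cdots d_r$ with $D$ as in~\eqref{eq:upperboundformula}. I expect this to come out cleanly by working with generating functions: encode $\chi$ of the complete intersection in $\PP^m$ as the coefficient of an appropriate monomial in a power series in one variable $t$ with the $d_j$ as parameters, perform the inclusion-exclusion sum over hyperplanes as a geometric-series manipulation (each hyperplane contributes a factor that telescopes), and read off that the surviving terms are exactly the monomials $d_1^{i_1}\cdots d_r^{i_r}$ with $i_1 + \cdots + i_r \le n - r$, each with coefficient $1$, times the overall factor $d_1 \cdots d_r$. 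A useful sanity check along the way is the curve case $n=2$, $r=1$: there $D = 1 + d_1 + \cdots + d_1^{?}$ truncated appropriately, and $D d_1$ should reproduce $d(d+1)$ from Theorem~\ref{thm:RiemannSurface} with $a = 4d$, i.e.\ $d^2 - 3d + 4d$. Another check is $r = n$ (zero-dimensional $X$), where $D = 1$ and the ML degree should equal the Bézout number $d_1 \cdots d_r$, which indeed counts the points of $X$, all lying generically off $\mathcal{H}$.
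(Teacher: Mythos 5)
Your proposal is correct, but it takes a genuinely different route from the paper's proof. The paper never invokes Theorem \ref{thm:eulerchar} here: it writes the critical equations as the maximal minors of the $(r+2)\times(n+1)$ matrix (\ref{eq:uJmatrix}), shows that the resulting determinantal locus $Y$ has the expected codimension $n-r$ and is Cohen--Macaulay, applies B\'ezout to get ${\rm MLdegree}(X)=\deg(X)\cdot\deg(Y)=d_1\cdots d_r\cdot\deg(Y)$, and evaluates $\deg(Y)$ by the Thom--Porteous--Giambelli formula as the complete homogeneous symmetric function of degree $n-r$ in the row degrees $0,1,d_1,\ldots,d_r$, which is $D$. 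You instead reduce to the signed Euler characteristic via Theorem \ref{thm:eulerchar} (legitimate, since Bertini gives smoothness of $X$ and hence of $X\setminus\mathcal{H}$), stratify by the $n+2$ hyperplanes, which are in linearly general position so that every $k$-fold intersection with $k\le n$ is a $\PP^{n-k}$ and every $(n+1)$-fold intersection is empty, and use the classical Chern-class formula for smooth complete intersections. Your ``expected'' collapse does in fact occur: writing $\chi$ of the generic complete intersection in $\PP^{m}$ as $d_1\cdots d_r$ times the coefficient of $h^{m-r}$ in $(1+h)^{m+1}/\prod_j(1+d_jh)$, the alternating sum over strata telescopes via $\sum_k\binom{n+2}{k}\bigl(-h/(1+h)\bigr)^k=(1+h)^{-(n+2)}$ to $d_1\cdots d_r\,[h^{n-r}]\bigl((1+h)\prod_j(1+d_jh)\bigr)^{-1}$, and after substituting $h\mapsto -h$ the sign $(-1)^{n-r}$ turns this into $[h^{n-r}]\bigl((1-h)\prod_j(1-d_jh)\bigr)^{-1}=D$, which is exactly \eqref{eq:upperboundformula}. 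As for what each approach buys: the paper's argument is intersection-theoretically self-contained, produces the explicit critical equations (useful computationally, cf.\ Example \ref{ex:Grassmannian}), and does not rest on the comparatively heavy Theorem \ref{thm:eulerchar}, whose proof is deferred to Section \ref{Proofs}; yours is more topological and explains directly why $D\,d_1\cdots d_r$ is a signed Euler characteristic, at the cost of requiring the (true, but to-be-stated) simultaneous genericity that all coordinate-linear sections of $X$ are smooth complete intersections of the expected dimension, which holds because only finitely many linear subspaces are involved.
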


\begin{proof}
By Bertini's Theorem, the generic complete 
intersection $X$ is smooth in $\mathbb{P}^n$.
All critical points of the likelihood function $\ell_u$ on $X$ 
lie in the dense open subset $X \backslash \mathcal{H}$.
 Consider the following $(r+2) \times (n+1)$-matrix with entries in 
 the polynomial ring $\R[p_0,p_1,\ldots,p_n]$: 
\begin{equation}
\label{eq:uJmatrix}
 \bmatrix u \\ {\tilde J}(p) \endbmatrix \quad = \quad 
\bmatrix 
& u_0 & u_1 & \cdots & u_n \\ 
& p_0 & p_1 & \cdots & p_n \smallskip \\ 
& 
p_0 \frac{\partial g_1}{\partial p_0} & 
p_1 \frac{\partial g_1}{\partial p_1} & \cdots & 
p_n \frac{\partial g_1}{\partial p_n} \medskip \\ 
& 
p_0 \frac{\partial g_2}{\partial p_0} & 
p_1 \frac{\partial g_2}{\partial p_1} & \cdots & 
p_n \frac{\partial g_2}{\partial p_n} \\ 
&  \vdots & \vdots & \ddots & \vdots & \smallskip \\ 
& 
p_0 \frac{\partial g_r}{\partial p_0} & 
p_1 \frac{\partial g_r}{\partial p_1} & \cdots & 
p_n \frac{\partial g_r}{\partial p_n} 
\endbmatrix . 
\end{equation}
 Let $Y$ denote the determinantal variety in $\mathbb{P}^n$ given by the vanishing of its 
$(r+2) \times (r+2)$ minors.  
The codimension of $Y$ is at most 
$n-r$, which is a general upper bound for ideals of maximal 
minors, and hence the dimension of $Y$ is at least $r$. 
Our genericity assumptions ensure that 
the matrix $\tilde J(p)$ has maximal row rank $r+1$ for all $p \in X$. 
Hence a point $p \in X$ lies in $Y$ 
if and only if the vector $u$ is in the row span of $\tilde J(p)$. 
Moreover, by Theorem \ref{thm:finite-to-one}, 
$$ (X_{\rm reg} \backslash \mathcal{H}) \,\cap \, Y \quad = \quad X \, \cap \, Y  $$ 
is a finite subset of $\mathbb{P}^n$, and its cardinality is the 
desired ML degree of $X$.

 Since $X$ has dimension  
$n-r$, we conclude that $Y$ has the maximum possible codimension,  
namely $n-r$,  and that the 
intersection of $X$ with the determinantal variety $Y$ is proper. 
We note that $Y$ is  Cohen-Macaulay, since $Y$ has  
maximal codimension $n-r$, and  
ideals of minors  of generic matrices are Cohen-Macaulay. 
B\'ezout's Theorem implies 
$$ 
{\rm ML degree}(X) \quad = \quad {\rm degree}(X) \cdot {\rm 
degree}(Y) \quad = \quad d_1 \cdots d_r \cdot {\rm degree}(Y). $$ 
The degree of the determinantal variety $Y$  equals 
the degree of the determinantal variety given by generic 
forms of the same row degrees.   By the
Thom-Porteous-Giambelli formula, this degree is 
the complete homogeneous symmetric function of degree $\,{\rm 
codim}(Y) = n-r\,$ evaluated at the row degrees of the matrix. 
Here, the row degrees are $\,0,1,d_1, \ldots, d_r $, and the value 
of that symmetric function is precisely $D$. 
We conclude that ${\rm degree}(Y) = D$. 
Hence the ML degree 
of the  generic complete intersection $\,X = \mathcal{V}( g_1,\ldots,g_r ) \,$ equals 
$\, D \cdot d_1 d_2 \cdots d_n $. 
\end{proof}

\begin{example} [$r=1$]  \label{ex:generichyper} \rm
A generic hypersurface of degree $d$ in $\mathbb{P}^n$ has ML degree
$$ d \cdot D \,\, = \,\, d + d^2 + d^3 + \cdots + d^n. $$
\end{example}

\begin{example}[$r=2,n = 3$] \label{eq:twosurfaces} \rm
A  space curve  that is the generic intersection
of two surfaces of degree $d$ and $e$ in $\mathbb{P}^3$ has ML degree
$\, de + d^2e + d e^2 $. \hfill $\diamondsuit$
\end{example}

\begin{remark} \rm
It was shown in \cite[Theorem 5]{HKS} that 
(\ref{eq:upperboundformula}) is an upper bound for the ML degree of any
variety $X$ {\em of codimension $r$} that is defined by polynomials of degree $d_1,\ldots,d_r$.
In fact, the same is true under the weaker hypothesis that $X$ is cut out by
polynomials of degrees $d_1 \geq \cdots \geq d_r \geq d_{r+1} \geq \cdots \geq d_s$,
so $X$ need not be a complete intersection.
However, the hypothesis ${\rm codim}(X) = r$ is essential in order
for ${\rm MLdegree}(X) \leq $ (\ref{eq:upperboundformula})  to hold.
That codimension hypothesis was forgotten when this
upper bound was cited in \cite[Theorem 2.2.6]{LiAS}
and in  \cite[Theorem 3.31]{PS}.
Hence these two book references are not
correct as stated. 

Here is a  simple counterexample. Let $n=3$ and $d_1=d_2 = d_3 = 2$.
Then the bound (\ref{eq:upperboundformula}) is the B\'ezout number $8$,
and this is also the correct ML degree for a general complete intersection of three quadrics in $\PP^3$.
Now let $X$ be a general rational normal curve in $\PP^3$.
The curve $X$ is  defined by three quadrics, namely, the
$2 \times 2$-minors of a $2 \times 3$-matrix filled
with general linear forms in
$p_0,p_1,p_2,p_3$. Since $X$ is a Riemann sphere with $15$ punctures,
Theorem \ref{thm:eulerchar} tells us that ${\rm MLdegree}(X) = 13$,
and this exceeds the bound of $8$.
\hfill $\diamondsuit $
\end{remark}

We now come to a variety that is ubiquitous in statistics, namely the model of
{\em independence} for two binary random variables \cite[\S 1.1]{LiAS}.
This model is represented by Segre's quadric surface $X$ in $\mathbb{P}^3$.
By this we mean the surface  defined by the $2 \times 2$-determinant:
$$ X \, = \, V(p_{00} p_{11} - p_{01} p_{10} ) \,\,\subset \,\, \PP^3. $$
The surface $X$ is isomorphic to $\PP^1 \times \PP^1$, so it is smooth,
and we can apply Theorem \ref{thm:eulerchar} to find the ML degree.
In other words, we seek  to determine the Euler characteristic of the 
open complex surface $\, X \backslash \mathcal{H}\,$ where
\[
\mathcal{H} = \bigl\{\, p \in \PP^3:
p_{00} p_{01} p_{10} p_{11} (p_{00} {+} p_{01} {+} p_{10} {+} p_{11}) = 0 \bigr\}.
\]
To this end, we write $X = \PP^1 \times \PP^1$
with coordinates $\bigl((x_0:x_1),(y_0:y_1) \bigr)$.
Our surface is parametrized by $p_{ij} = x_i y_j$, and hence
$$
\begin{matrix}
X \backslash \mathcal{H} &= &
\big(\PP^1 \times \PP^1\big) \backslash \big\{ x_0x_1y_0y_1(x_0+x_1)(y_0+y_1) = 0\big\}
\qquad \qquad \smallskip
\\ &= & 
\big(\PP^1 \backslash \{x_0x_1(x_0+x_1) = 0\}\big) \,\times\,
\big(\PP^1 \backslash \{y_0y_1(y_0+y_1) = 0\}\big) 
\smallskip
\\ & = &
\big(\hbox{$2$-sphere} \backslash \{\hbox{three points}\}\big) \,\times\,
\big(\hbox{$2$-sphere} \backslash \{\hbox{three points}\}\big) .
\end{matrix}
$$
Since the Euler characteristic is additive and multiplicative, 
$$ \chi(X \backslash \mathcal{H}) \,\,=\,\, (-1) \cdot (-1)\,\, =\,\, 1. $$
This means that the map $u \mapsto \hat p$ from
the data to the MLE is a rational function in each coordinate.
The following ``word problem for freshmen'' is
aimed at finding that  function.

\begin{example} \label{ex:biologist} \rm
Do this exercise:
A biologist friend of yours wishes to test whether two binary random variables
are independent. She collects data and records the
matrix of counts 
$$\,u  \,\,=\,\, \begin{pmatrix} u_{00} & u_{01} \\
u_{10} & u_{11} \end{pmatrix} .$$
How to ascertain whether
 $u$ lies close to the independence model
 $$ \qquad X  \,\,=\,\, V(p_{00} p_{11} - p_{01} p_{10}) \,? $$
A statistician who recently started working in her lab explains that, as the first step in the
analysis of her data,  the biologist should calculate the
 maximum likelihood estimate (MLE)
$$ \hat p \, = \, \begin{pmatrix} \hat p_{00} & \hat p_{01} \\
\hat p_{10} & \hat p_{11} \end{pmatrix} . $$
Can you help your friend by supplying the formula for
$\hat p$ as a rational function in $u$?

\smallskip

The solution to this word problem is as follows.
 The MLE is the rank $1$ matrix
 \begin{equation}
 \label{eq:MLE22}
 \hat p \, = \,\frac{1}{(u_{++})^2}
          \begin{pmatrix} u_{0+} \\ u_{1+} \end{pmatrix}
\cdot \begin{pmatrix} u_{+0} & u_{+1} \end{pmatrix}. 
\end{equation}
We illustrate the concepts introduced above
by deriving this well-known formula.
The likelihood correspondence   $\mathcal{L}_X$ 
of $\,X = V(p_{00} p_{11} - p_{01} p_{10})\,$
is the subvariety of $X \times \mathbb{P}^3$ defined~by 
\begin{equation}
\label{eq:MLE22b}
 U \cdot ( p_{00},p_{01},p_{10},p_{11})^T  \,\, = \,\,0 , 
 \end{equation}
where $U$ is the matrix 
$$
U \,\, = \,\,
\begin{pmatrix}
        0   & -u_{10} - u_{11} &      0   & u_{00}+u_{01} \\
   u_{11}+u_{01} &  -u_{00}-u_{10} &     0  &    0    \\
    u_{11}+u_{10} &    0  & -u_{01}-u_{00} &    0    \\
        0   &   0  & -u_{01}-u_{11} & u_{00}+u_{10} 
 \end{pmatrix}.
$$
 We urge the reader to derive
(\ref{eq:MLE22b}) 
from Definition \ref{def:MLD_and_LC}
using a computer algebra system.

Note that the determinant of $U$ vanishes identically.
In fact, for generic $u_{ij}$, the matrix $U$ has rank $3$,
so its kernel is spanned by a single vector. The coordinates
of that vector are given by Cramer's rule, and we find them
to be equal to the rational functions in (\ref{eq:MLE22}).

The locus where the function $u \mapsto \hat p$ is undefined
consists of those $u$  where the matrix rank of $U$ drops below $3$.
A computation shows that the rank of $U$ drops
 to $2$ on the variety 
$$V(u_{00}+u_{10},u_{01}+u_{11}) \,\cup \,
V(u_{00}+u_{01},u_{10}+u_{11}), $$
and it drops to $0$ on  the point $\,V(u_{00}+u_{01},u_{10}+u_{11},u_{01}+u_{11})$.  
In particular, the likelihood function 
$\ell_u$ given by that point $u$
 has infinitely many critical points
in the quadric  $X$.
\hfill $\diamondsuit$
\end{example}

We note that all coefficients of the linear forms 
that define the exceptional loci in $\PP^3_u$ for
the independence model are positive. This 
means that data points $u$ with all coordinates positive
can never be exceptional.
We will prove in Section \ref{Proofs} that this usually holds.
Let ${\rm pr}_1:\sL_X \to \mathbb{P}^n_p$ and ${\rm pr}_2: \sL_X \to \mathbb{P}^n_u$ be the projections from the likelihood correspondence to
  $p$-space and  $u$-space respectively.
We are interested in the fibers of  ${\rm pr}_2$ over
positive points~$u$.

\begin{theorem}\label{PositiveData}
Let $u \in \mathbb{R}^{n+1}_{> 0}$, and
let $X \subset \PP^n$ be an
irreducible variety such that no
singular points of any intersection
 $X \cap \{p_i = 0\}$ lies in the hyperplane at infinity $\{p_+ = 0\}$. Then
\begin{enumerate}
\item the likelihood function $\ell_u$ on $X$ has only finitely many critical 
points in  $X_{\rm reg} \backslash \mathcal{H}$;
\item if the fiber ${\rm pr}_2^{-1}(u)$ is contained in $X_{\rm reg}$, then its length equals the ML degree of $X$.
\end{enumerate}
\end{theorem}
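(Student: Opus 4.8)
The plan is to analyze the projection $\mathrm{pr}_2 \colon \mathcal{L}_X \to \mathbb{P}^n_u$ near a positive data point $u$, using the structure of the likelihood correspondence established in Theorem \ref{thm:finite-to-one}. Since $\mathcal{L}_X$ is irreducible of dimension $n$ and $\mathrm{pr}_2$ is generically finite-to-one, the fiber $\mathrm{pr}_2^{-1}(u)$ is either finite or has positive-dimensional components. The key point for part (1) is to rule out the latter for strictly positive $u$, and for part (2) to show that a length-$(\deg \mathrm{pr}_2)$ scheme-theoretic fiber is obtained exactly when the fiber avoids $X_{\mathrm{sing}}$.

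First I would set up the affine chart where the critical equations live. On $X_{\mathrm{reg}} \backslash \mathcal{H}$, a point $p$ is critical for $\ell_u$ iff $\mathrm{dlog}(\ell_u)$ annihilates the tangent space $T_p X$, equivalently iff the row vector $(u_0/p_0, \ldots, u_n/p_n) - (u_+/p_+)(1,\ldots,1)$ lies in the span of the conormal directions; clearing denominators this becomes a polynomial incidence condition. The crucial observation — already visible in the independence-model example and in the linear-forms analysis — is that the ``bad'' behavior of $\mathrm{pr}_2$ (fibers jumping in dimension, or the MLE map becoming undefined) is confined to a subvariety of $\mathbb{P}^n_u$ cut out by forms with a sign pattern forcing them to vanish only at $u$ with a coordinate zero or a coordinate-sum zero. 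To make this precise in general, I would argue by a closure/specialization argument: take the closure $\overline{\mathcal{L}_X}$, and suppose a component $Z$ of $\mathrm{pr}_2^{-1}(u)$ has positive dimension. Points of $Z$ lie in $\overline{X_{\mathrm{reg}} \backslash \mathcal{H}}$, so either $Z \subseteq X_{\mathrm{reg}} \backslash \mathcal{H}$ or $Z$ meets $\mathcal{H}$ or $X_{\mathrm{sing}}$. In the first case, a positive-dimensional family of genuine critical points of $\ell_u$ on the compact-away-from-$\mathcal{H}$ locus contradicts Morse-theoretic nondegeneracy (or one invokes Theorem \ref{thm:eulerchar}/\ref{thm:finite-to-one} directly: the generic fiber is finite and the ML degree is the honest count, so a positive-dimensional fiber over a positive point would have to be explained by limits into $\mathcal{H}$). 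The real content is the boundary analysis: I would use the hypothesis that no singular point of any slice $X \cap \{p_i = 0\}$ lies on $\{p_+ = 0\}$ to show that as critical points degenerate, they cannot escape to a bad configuration over positive $u$ — concretely, the limiting equations on $X \cap \mathcal{H}$ would force a relation among the (positive) coordinates of $u$ that is impossible, because the relevant linear forms in $u$ have all-positive coefficients, exactly as in the independence example.

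For part (2), assuming the fiber $\mathrm{pr}_2^{-1}(u)$ is finite (by part (1)) and contained in $X_{\mathrm{reg}}$: here $\mathrm{pr}_1$ exhibits $\mathcal{L}_X$ as a $\mathbb{P}^{r}$-bundle over $X_{\mathrm{reg}} \backslash \mathcal{H}$ (with $r = \mathrm{codim}\, X$ appropriately, from Theorem \ref{thm:finite-to-one}), so $\mathcal{L}_X$ is smooth, hence Cohen–Macaulay, over that locus. Then $\mathrm{pr}_2$ restricted to the open set $\mathrm{pr}_1^{-1}(X_{\mathrm{reg}})$ is a morphism from a Cohen–Macaulay $n$-fold to the smooth $n$-fold $\mathbb{P}^n_u$ which is generically finite; a morphism between smooth (or CM-over-smooth) varieties of the same dimension that is quasi-finite over $u$ is flat at $u$ by miracle flatness / the local criterion for flatness. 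Flatness forces the length of every fiber to equal the generic fiber length, which is $\deg \mathrm{pr}_2 = \mathrm{MLdegree}(X)$. The hypothesis ``$\mathrm{pr}_2^{-1}(u) \subset X_{\mathrm{reg}}$'' is what lets us stay inside the CM locus of $\mathcal{L}_X$, so that miracle flatness applies at every point of the fiber; without it the fiber could meet the singular locus of $\mathcal{L}_X$ and acquire embedded or deficient length.

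The main obstacle I anticipate is the degeneration analysis in part (1): controlling the limits of critical points as they approach $\mathcal{H}$ and proving that the no-singularities-at-infinity hypothesis genuinely prevents a positive-dimensional critical locus over positive $u$. One clean route is to reduce to the case of the ambient space — intersect with the torus $(\mathbb{C}^*)^{n+1}$ via the embedding $\mathbb{P}^n \backslash \mathcal{H} \hookrightarrow (\mathbb{C}^*)^{n+1}$ — and then run a properness argument for the map from (the relevant piece of) $\mathcal{L}_X$ to $u$-space, showing the fiber over real positive $u$ is proper hence finite; the hypothesis on slices $X \cap \{p_i=0\}$ is precisely the condition that makes the compactification behave, ruling out components of the fiber escaping along the boundary strata. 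Getting the book-keeping of which boundary stratum is reachable, and matching it to the positivity of the coefficients of $u$ in the degenerate equations, is where the work lies; the flatness argument in part (2) should then be essentially formal.
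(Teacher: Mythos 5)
Your overall strategy is the right one, but the proposal defers exactly the step that constitutes the paper's proof, so as it stands there is a genuine gap. The heart of the matter is the boundary analysis that you explicitly postpone (``where the work lies''). The paper carries it out by fixing a resolution $\pi:\widetilde{X}\to X$, an isomorphism over $X_{\rm reg}\backslash\mathcal{H}$, whose boundary is a simple normal crossing divisor with components $D_1,\dots,D_k$, and proving a sign lemma: for every $D_j$ with $\pi(D_j)\subseteq\mathcal{H}$, the orders ${\rm ord}_{D_j}(\varphi_i)$ of the coordinate functions $\varphi_i=p_i/p_+$ all have the same sign and do not all vanish --- this is precisely where the hypothesis that no singular point of $X\cap\{p_i=0\}$ lies on $\{p_+=0\}$ enters. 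Consequently ${\rm ord}_{D_j}(\ell_u)=\sum_i u_i\,{\rm ord}_{D_j}(\varphi_i)\neq 0$ for strictly positive $u$, and hence ${\rm dlog}(\ell_u)$, viewed as a section of $\Omega^1_{\widetilde{X}}(\log D)$, vanishes nowhere on $\pi^{-1}(\mathcal{H})$. Your remark that ``the relevant linear forms in $u$ have all-positive coefficients, exactly as in the independence example'' is the correct intuition, but in general it is exactly this statement that must be proved, and no argument for it is given. Note also that your dismissal of the ``interior'' case by Morse-theoretic nondegeneracy is not valid: $u$ is a fixed special positive vector, no genericity is available, and Example \ref{ex:may31shocker} shows that a positive-dimensional critical locus over a positive $u$ really can occur once the hypothesis fails. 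The correct (and needed) reason is simply that a positive-dimensional component of the critical locus has a projective closure which must meet the ample divisor $\mathcal{H}$, and then the nonvanishing of the logarithmic form on the boundary yields the contradiction; this is how the paper argues, via the projection formula.

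For part (2), your miracle-flatness argument on the locus ${\rm pr}_1^{-1}(X_{\rm reg}\backslash\mathcal{H})$ is essentially parallel to the paper's conservation-of-number computation on the Cohen--Macaulay variety $(\sL_X)_{\rm reg}$, and can be made to work. But it has the same hidden dependence: the hypothesis only places ${\rm pr}_2^{-1}(u)$ over $X_{\rm reg}$, while the scheme-theoretic fiber of the closure $\mathcal{L}_X$ could a priori contain limit points lying over $X_{\rm reg}\cap\mathcal{H}$, outside the projective-bundle locus where your flatness argument lives. The paper excludes such points by the same nonvanishing lemma (any such limit would lift to a zero of ${\rm dlog}(\ell_u)$ on $\pi^{-1}(\mathcal{H})$). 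So both parts of your proposal rest on the boundary lemma that is missing; supplying it --- for instance via the resolution-of-singularities and order-of-vanishing computation sketched above --- is the actual content of the theorem.
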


The hypothesis concerning ``no singular point'' will be satisfied for
essentially all statistical models of interest.
Here is an example which shows that this hypothesis is necessary.

\begin{example}
\label{ex:may31shocker} \rm
We consider the smooth cubic curve $X$ in $\PP^2$ that is defined by
$$ f\, \,\,= \,\,\, (p_0+p_1+p_2)^3 \,+\, p_0p_1p_2 .$$
The  ML degree of the curve $X$ is $3$.
Each intersection $X \cap \{p_i = 0\}$ is a triple point that lies
on the line at infinity $\{p_+ = 0\}$.
  The fiber ${\rm pr}_2^{-1}(u)$
of the likelihood fibration over the positive point $u = (1:1:1)$ is
the entire curve $X$.
\hfill $\diamondsuit $
\end{example}

If $u$ is not positive
in Theorem \ref{PositiveData},
 then the fiber of ${\rm pr}_2$ over $u$ may have positive dimension. We saw an instance of this at the end of Example \ref{ex:biologist}.
Such {\em resonance loci} have been studied extensively 
 when $X$ is a linear subspace of $\mathbb{P}^n$. See \cite{CDFV} and references therein.

The following cautionary example shows that
the length of the scheme-theoretic fiber of $\sL_X \to \mathbb{P}^n_u$ over
special points $u$ in the open simplex $\Delta_n$
may exceed the ML degree of~$X$.

\begin{example} \label{ex:cautionary}  \rm
Let $X$ be the curve in $\mathbb{P}^2$ defined by the ternary cubic
\[ f\,\, = \,\, p_2(p_1-p_2)^2+(p_0-p_2)^3 .
\]
This curve intersects $\sH$ in $8$ points, has ML degree $5$, and has a cuspidal singularity at
\[
P\,\,:=\,\,(1:1:1).
\]
The prime ideal in $\R[p_0,p_1,p_2,u_0,u_1,u_2]$ 
for the likelihood correspondence $\mathcal{L}_X$  is
minimally generated by five polynomials, having
degrees $(3,0), (2,2), (3,1), (3,1), (3,1)$. They are
 obtained by saturating the
two equations in (\ref{eq:planefJ}) with respect to 
$\,\langle p_0 p_2 \rangle \cap \langle p_0-p_1,p_2-p_1 \rangle$.

The scheme-theoretic fiber of ${\rm pr}_1$ over a general point of $X$ is a reduced line in the $u$-plane, while the fiber of ${\rm pr}_1$ over $P$ is the double line
\[
L:= \big\{\, (u_0:u_1:u_2) \in \mathbb{P}^2 : (2u_0-u_1-u_2)^2=0\, \big\} .
\]
The reader is invited to verify the following assertions using a computer algebra system:
\begin{enumerate}
\item[(a)] If $u$ is a general point of $\mathbb{P}^2_u$, then $\text{pr}_2^{-1}(u)$ 
consists of $5$ reduced points in $X_{\rm reg}\backslash \sH$. 
\item[(b)]  If $u$ is a general point on the line $L$, then 
the locus of critical points $\text{pr}_2^{-1}(u)$ consists 
of $4$ reduced points in $X_{\rm reg} \backslash \sH$ and the reduced point $P$.
\item[(c)] If $u$ is the point $(1:1:1) \in L$, then
$\text{pr}_2^{-1}(u)$ is a zero-dimensional scheme of length $6$. This scheme consists of $3$ reduced points in $X_{\rm reg} \backslash \sH$ and $P$ counted with multiplicity $3$.
\end{enumerate}
In particular, the fiber in (c) is not algebraically equivalent to the general fiber
(a). This example illustrates one of the difficulties classical geometers had to face when formulating the ``principle of conservation of numbers''. See \cite[Chapter 10]{Fulton} for a modern treatment.
\hfill $\diamondsuit$
\end{example}

\smallskip

It is instructive to examine  classical varieties from projective geometry from the
likelihood perspective. For instance, we may study the Grassmannian in its Pl\"ucker 
embedding. Grassmannians are a nice test case because they are
smooth, so that Theorem \ref{thm:eulerchar} applies.

\begin{example}\label{ex:Grassmannian} \rm
Let $X = G(2,4)$ denote the Grassmannian of lines in $\PP^3$.
In its Pl\"ucker embedding in $\mathbb{P}^5$, this Grassmannian is the quadric
hypersurface defined by
\begin{equation}
\label{eq:pluckerquadric}
 p_{12} p_{34} - p_{13} p_{24} + p_{14} p_{23} \,\,= \,\,0 . 
\end{equation}
As in (\ref{eq:uJmatrix}),
the critical equations for the likelihood function $\ell_u$ are 
the $3 \times 3$-minors of
\begin{equation}
\label{eq:G24jacobian}
 \begin{bmatrix}
       u_{12} &   u_{13} &    u_{14} &   u_{23} &   u_{24} &   u_{34} \\
       p_{12} &    p_{13} &    p_{14} &   p_{23} &  p_{24} &   p_{34} \\
\,     p_{12} p_{34} &   - p_{13} p_{24} &   p_{14} p_{23} &  p_{14} p_{23} &
         - p_{13} p_{24} &  p_{12} p_{34} \,
\end{bmatrix} .
\end{equation}
By Theorem  \ref{thm:finite-to-one},
the likelihood correspondence $\mathcal{L}_X$ is a
five-dimensional subvariety of $\mathbb{P}^5 \times \mathbb{P}^5$.
The cohomology class of this subvariety can be represented by the bidegree
of its ideal:
\begin{equation}
\label{eq:GrassmannBidegree}
B_X(p,u) \quad = \quad
{\bf 4} p^5 \,+\, 6 p^4 u \,+\, 6 p^3 u^2 \,+\, 6 p^2 u^3 \,+\, 2pu^4 .
\end{equation}
This is the {\em multidegree}, in the sense of
\cite[\S 8.5]{MS}, of $\mathcal{L}_X$ with respect to the
natural $\mathbb{Z}^2$-grading on the polynomial ring $\mathbb{R}[p,u]$.
We can use \cite[Proposition 8.49]{MS} to
compute the bidegree from the prime ideal of $\mathcal{L}_X$.
Its leading coefficient $4$ is the ML degree of $X$.
Its trailing coefficient $2$ is the degree of $X$.
The polynomials $B_X(p,u)$ will be studied in Section~3.

The prime ideal of $\mathcal{L}_X$ is computed from the equations 
in (\ref{eq:pluckerquadric}) and (\ref{eq:G24jacobian})
by saturation with respect to  $\mathcal{H}$.
It is minimally generated by the following eight polynomials in $\mathbb{R}[p,u]$:
\begin{itemize}
\item[(a)] one polynomial of  degree $(2,0)$, namely the Pl\"ucker quadric,
\item[(b)] six polynomials of degree $(1,1)$, given by $2 \times 2$-minors of
$$ \begin{pmatrix}
p_{12}-p_{34} & p_{13}-p_{24} & p_{14}-p_{23} \\
u_{12}-u_{34} &  u_{13}-u_{24} & u_{14}-u_{23} 
\end{pmatrix}  
\qquad \hbox{and}
$$
$$ \!\!\! \begin{pmatrix}
 p_{12}{+}p_{13}{+}p_{23} & p_{12}{+}p_{14}{+}p_{24} & p_{13}{+}p_{14}{+}p_{34} & p_{23}{+}p_{24}{+}p_{34} \\
 u_{12}{+}u_{13}{+}u_{23} & u_{12}{+}u_{14}{+}u_{24} & u_{13}{+}u_{14}{+}u_{34} & u_{23}{+}u_{24}{+}u_{34} 
 \end{pmatrix},
 $$
 \item[(c)] one polynomial of  degree $(2,1)$, for instance
$$ 
\begin{matrix}
  2 u_{24} p_{12}p_{34}
+2 u_{34} p_{13} p_{24}
+(u_{23}+u_{24}+u_{34}) p_{14} p_{24}\\
 -(u_{13}+u_{14}+u_{34})p_{24}^2
-(u_{12}+2 u_{13}+u_{14}-u_{24}) p_{24} p_{34}.
\end{matrix}
      $$
\end{itemize}
For a fixed positive data vector $u > 0$, these six polynomials in (b)
reduce to three linear equations, and these cut out a plane $\PP^2 $ inside
$ \PP^5$. To find the four critical points of $\ell_u$ on $X = G(2,4)$,
  we must then intersect the two conics (a) and (c) in that plane $\PP^2$.

The ML degree of the Grassmannian $G(r,m)$ in
$\PP^{\binom{m}{r}-1}$ is the signed Euler characteristic
of the manifold $\,G(r,m) \backslash \mathcal{H}\,$ obtained
by removing $\binom{m}{r}+1$ distinguished hyperplane sections.
It would be very interesting to find a general formula for this ML
degree. At present, we only know that
the ML degree of $G(2,5)$ is $26$, and that
the ML degree of $G(2,6)$ is $156$.
By Theorem \ref{thm:eulerchar}, these 
numbers give the Euler characteristic of 
$G(2,m) \backslash \mathcal{H}$ for $m \leq 6$.
 \hfill $\diamondsuit$
\end{example}

We end this lecture with a discussion of the delightful case
when $X$ is a linear subspace of $\PP^n$, and the open variety
 $X \backslash \mathcal{H}$ is the complement
of a {\em hyperplane arrangement}.
In this context,  following Varchenko \cite{Varchenko}, the  likelihood function
$\ell_u$ is known as as the {\em master function},
and  the statement of Theorem \ref{thm:eulerchar} 
was  first proved by Orlik and Terao in \cite{Orlik-Terao}.
We assume that
$X$ has dimension $d$, is defined over $\mathbb{R}$,
and does not contain the vector ${\bf 1} = (1,1,\ldots,1)$.
We can regard $X$ as a
$(d+1)$-dimensional linear subspace of $\R^{n+1}$.
The orthogonal complement $X^\perp$ with respect to the
standard dot product is a linear space of dimension $n-d$ in $\R^{n+1}$.
The linear space $X^{\perp} + {\bf 1}$ spanned by $X^\perp$ and the vector ${\bf 1}$
 has dimension $n-d+1$ in $\R^{n+1}$,
and hence can be viewed as subspace of codimension $d$ in $\PP^n_u$.
In our next formula,
 the operation $\star$ is the Hadamard product or coordinatewise product.

\begin{proposition} \label{prop:linLC}
The likelihood correspondence $\mathcal{L}_X$ in $\PP^n \times \PP^n$ is defined by
\begin{equation}
\label{eq:linearlikelihood}
 p \in X \quad \hbox{and} \quad
u \,\in\, p \star (X^\perp + {\bf 1}).
\end{equation}
The prime ideal of $\mathcal{L}_X$ is obtained from these
constraints by saturation with respect to $\mathcal{H}$.
\end{proposition}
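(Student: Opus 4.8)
The plan is to reduce the statement to a Lagrange-multiplier computation, exploiting that a linear space is its own tangent space everywhere. I would first record that $X$, being linear, is smooth, so $X_{\rm reg}=X$, and that under the cone map $\C^{n+1}\setminus\{0\}\to\PP^n$ the projective tangent space to $X$ at any point $p\in X\setminus\mathcal{H}$ lifts to the linear subspace $X\subseteq\C^{n+1}$ itself. Hence $p$ is a critical point of $\ell_u|_X$ precisely when the covector ${\rm dlog}(\ell_u)$ annihilates $X$; and since ${\rm dlog}(\ell_u)(p)\cdot p=u_+-u_+=0$ for every $p\notin\mathcal{H}$, this is the honest condition that the vector ${\rm dlog}(\ell_u)(p)\in\C^{n+1}$ lie in the orthogonal complement $X^\perp$.

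Next I would convert this to the asserted Hadamard form. By (\ref{eq:nablalog}), the condition ${\rm dlog}(\ell_u)(p)\in X^\perp$ is equivalent to $(u_0/p_0,\dots,u_n/p_n)\in X^\perp+\C\,{\bf 1}$: one direction is immediate from (\ref{eq:nablalog}), and conversely, writing $(u_i/p_i)_i=w+c\,{\bf 1}$ with $w\in X^\perp$ and pairing with $p\in X$ forces $c=u_+/p_+$ (legitimate since $p_+\neq0$ on $X\setminus\mathcal{H}$), so that ${\rm dlog}(\ell_u)(p)=w\in X^\perp$. As all $p_i$ are nonzero on $X\setminus\mathcal{H}$, the Hadamard multiplication $v\mapsto p\star v$ is invertible there, so $(u_i/p_i)_i\in X^\perp+{\bf 1}$ says exactly $u\in p\star(X^\perp+{\bf 1})$. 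This shows that the locally closed set $S:=\{(p,u):p\in X\setminus\mathcal{H},\ u\in p\star(X^\perp+{\bf 1})\}$ coincides with the set whose closure defines $\mathcal{L}_X$ in Definition~\ref{def:MLD_and_LC}, which already proves the first assertion.

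For the ideal-theoretic claim I would make the constraints explicit: choosing linear forms $h_1,\dots,h_{n-d}$ that cut out $X$, whose coefficient vectors $c^{(1)},\dots,c^{(n-d)}$ form a basis of $X^\perp$, the constraint $u\in p\star(X^\perp+{\bf 1})$ becomes the vanishing of the maximal minors of the matrix (\ref{eq:uJmatrix}) with $g_\alpha=h_\alpha$, whose rows are $u$, $p$, and $p\star c^{(1)},\dots,p\star c^{(n-d)}$. Let $W\subseteq\PP^n\times\PP^n$ be cut out by the $h_\alpha$ and these minors. Over $X\setminus\mathcal{H}$ the rows $p,\,p\star c^{(\alpha)}$ are linearly independent (because ${\bf 1}\notin X^\perp$, which follows from $X\not\subseteq\mathcal{H}$), so $W$ restricted to $(\PP^n\setminus\mathcal{H})\times\PP^n$ is a $\PP^{n-d}$-bundle over $X\setminus\mathcal{H}$: smooth, irreducible, of dimension $n$, and equal to the locus $S$ above. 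Hence $\mathcal{L}_X=\overline{S}$ is the unique irreducible component of $W$ not contained in $\mathcal{H}\times\PP^n$, and $W$ is generically reduced along it; therefore the saturation by $\mathcal{H}$ of the ideal generated by the $h_\alpha$ and the minors is radical with variety $\mathcal{L}_X$, hence equals the prime ideal of $\mathcal{L}_X$ by the irreducibility in Theorem~\ref{thm:finite-to-one}. The Lagrange computation and the Hadamard bookkeeping are routine; the step needing care — and the main obstacle — is this last one: verifying that clearing denominators introduces no spurious component away from $\mathcal{H}$ and that the saturated ideal is already radical. I would handle it through the bundle picture of $W$ over $X\setminus\mathcal{H}$ just described, or, paralleling the proof of Theorem~\ref{thm:genericMLdegree}, through Cohen--Macaulayness of the relevant determinantal ideal together with the count $\dim\mathcal{L}_X=n$.
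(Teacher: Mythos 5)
Your treatment of the first assertion is essentially the paper's own proof: the paper likewise notes that where all $p_i\neq 0$ the condition $u\in p\star(X^\perp+\mathbf{1})$ means $u/p$ lies in the span of $X^\perp$ and $\mathbf{1}$, and that the relevant representative of $u/p$ modulo $\mathbf{1}$ is exactly ${\rm dlog}(\ell_u)$; your pairing-with-$p$ step that forces the coefficient to be $u_+/p_+$ just makes explicit what the paper compresses into one sentence. Where you genuinely go beyond the paper is the second assertion: the published proof stops at the set-theoretic statement and gives no argument that saturating the constraints (\ref{eq:linearlikelihood}) by $\mathcal{H}$ produces the radical, prime ideal of $\mathcal{L}_X$, whereas you supply one via the determinantal presentation and the fact that over $X\setminus\mathcal{H}$ the scheme cut out by the minors is a $\PP^{n-d}$-bundle (using $\mathbf{1}\notin X^\perp$, i.e.\ $X\not\subseteq\mathcal{H}$), hence reduced and irreducible, so that the saturation is radical with variety $\mathcal{L}_X$. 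One caution on how you phrase that step: ``generically reduced along $\mathcal{L}_X$'' alone would not exclude embedded components away from $\mathcal{H}$ surviving the saturation; what actually closes the argument is the stronger fact you have available, namely reducedness (indeed smoothness) of the scheme on all of $(\PP^n\setminus\mathcal{H})\times\PP^n$, seen locally by row-reducing $u$ against the everywhere independent rows $p,\,p\star c^{(1)},\ldots,p\star c^{(n-d)}$, so that the minor ideal becomes a regular sequence of $d$ forms linear in $u$ — precisely the point you flag as the main obstacle, and the one worth writing out.
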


\begin{proof}
If all $p_i$ are non-zero then
$\,u \,\in\, p \star (X^\perp + {\bf 1})\,$ says that
\[
u/p := \big(\frac{u_0}{p_0}, \frac{u_1}{p_1}, \ldots, \frac{u_n}{p_n}\big)
\]
lies in the subspace
$X^{\perp} + {\bf 1}$. Equivalently, the vector obtained by
adding a multiple of $(1,1,\ldots,1)$ to $u/p$ is perpendicular to $X$.
We can take that vector to be the differential (\ref{eq:nablalog}).
Hence  (\ref{eq:linearlikelihood})
expresses the condition that
$p$ is a critical point of $\ell_u$ on $X$.
\end{proof}

The intersection $X \cap \mathcal{H}$
is an arrangement of $n+2$ hyperplanes in $X \simeq \PP^d$.
For special choices of the subspace $X$, it may happen that two or more hyperplanes coincide.
Taking $\{p_+ = 0\}$ as the hyperplane at infinity, 
we view   $X \cap \mathcal{H}$  as an
arrangement of $n+1$ hyperplanes in the  affine space $\mathbb{R}^d$.
A region of this arrangement is {\em bounded} if it is disjoint from $\{p_+ = 0\}$.

\begin{theorem}\label{LinearCase}
The ML degree of $X$ is the number of bounded regions of the real affine hyperplane arrangement $X \cap \sH$ in $\mathbb{R}^d$. The bidegree of the likelihood correspondence $\sL_X$ is the 
$h$-polynomial of the broken circuit complex of
the rank $d{+}1\!$ matroid associated with $X \cap \sH$.
\end{theorem}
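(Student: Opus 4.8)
The plan is to prove the two assertions of Theorem~\ref{LinearCase} separately, using the explicit description of $\sL_X$ from Proposition~\ref{prop:linLC} together with the Euler characteristic formula of Theorem~\ref{thm:eulerchar}.

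\textbf{Step 1: the ML degree equals the number of bounded regions.}
Since $X$ is a linear subspace, it is smooth, and $X\backslash\sH$ is the complement of an arrangement $\sA$ of $n+1$ affine hyperplanes in $\R^d$ (after choosing $\{p_+=0\}$ as the hyperplane at infinity). By Theorem~\ref{thm:eulerchar}, the ML degree of $X$ equals $(-1)^d\chi(X\backslash\sH)$, where $\chi$ is the topological Euler characteristic of the complexified complement $M(\sA_\C)$. First I would invoke the classical theorem of Orlik--Solomon (or the Orlik--Terao proof cited just before the statement) that the complement of a complex affine arrangement of rank $d$ has $(-1)^d\chi(M(\sA_\C))$ equal to the number of bounded regions of the real arrangement; equivalently, $(-1)^d\chi$ is the value $|\mu(\hat 0,\hat 1)|$-type evaluation $T(\sA)$ at an appropriate point, or the top coefficient $b_d$ of the Poincar\'e polynomial $\pi(\sA,t)=\sum b_i t^i$ evaluated so that $(-1)^d\chi=\pi(\sA,-1)=\sum(-1)^i b_i$ --- but the cleanest route is Zaslavsky's theorem: the number of bounded regions of a real affine arrangement of rank $d$ is $(-1)^d\chi_\sA(1)$ where $\chi_\sA$ is the characteristic polynomial, and this equals $(-1)^d\chi(M(\sA_\C))$ by the Orlik--Solomon computation of the cohomology of the complement. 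Chaining these identities gives the first assertion. I should be careful about one subtlety: if the defining linear forms of $X\cap\sH$ are not pairwise distinct (the ``special choices'' remarked on in the text), the matroid to use is the one on the \emph{distinct} hyperplanes, and I would note that coincident hyperplanes do not change $X\backslash\sH$, so the statement is unaffected.

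\textbf{Step 2: the bidegree is the $h$-polynomial of the broken circuit complex.}
Here I would use the parametrization from Proposition~\ref{prop:linLC}: $\sL_X$ is (the closure of the relevant locus in) $\{(p,u): p\in X,\ u\in p\star(X^\perp+{\bf 1})\}$. Since $\mathrm{pr}_1:\sL_X\to X$ is, away from $\sH$, a projective bundle whose fiber over $p$ is the linear space $p\star(X^\perp+{\bf 1})\cong\PP^{n-d}$, the variety $\sL_X$ is birational to a $\PP^{n-d}$-bundle over $\PP^d$, hence its multidegree (bidegree) $B_X(p,u)=\sum_i b_i\, p^{n-i}u^{i}$ has coefficients $b_i$ that are intersection numbers on this bundle. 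The key computation is to identify these $b_i$ with the number of independent sets, or rather with the $h$-vector entries, of the broken circuit complex. The bridge is the \emph{Orlik--Terao algebra} / the known fact that the bigraded Hilbert series of the coordinate ring of $\sL_X$ (equivalently the class of $\sL_X$ in the Chow ring of $\PP^n\times\PP^n$) is governed by the characteristic polynomial of the matroid: concretely, $\sum b_i t^i = t^d\,\chi_\sA(1/t)/(\text{leading sign})$ rewritten, and the identity that the $h$-polynomial of the broken circuit complex of a rank-$(d{+}1)$ matroid $\mathsf M$ is $\sum_{i=0}^d (-1)^i w_i t^{d-i}$ where $w_i$ are the (unsigned) Whitney numbers of the first kind, i.e. the coefficients of the characteristic polynomial. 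So the plan is: (i) compute the class $[\sL_X]\in H^*(\PP^n\times\PP^n)$ via the projective-bundle structure, expressing $b_i$ in terms of the Segre/Chern classes of the bundle $p\star(X^\perp+{\bf 1})$, which are controlled by the combinatorics of which coordinate hyperplanes $\{p_i=0\}$ meet $X$; (ii) recognize the resulting generating function as the characteristic polynomial of the matroid of $X\cap\sH$; (iii) apply the classical identity (due to Brylawski/Las Vergnas, see Bj\"orner's survey) equating the reverse of the characteristic polynomial with the $h$-polynomial of the broken circuit complex.

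\textbf{Main obstacle.}
The first assertion is essentially a citation-chaining exercise, so the real work is Step~2(i)--(ii): extracting the bidegree of $\sL_X$ from Proposition~\ref{prop:linLC} and matching it to the characteristic polynomial. The subtlety is that $\sL_X$ is the \emph{closure} of the graph over $X\backslash\sH$, so the naive projective-bundle class must be corrected by contributions supported over $X\cap\sH$; controlling these boundary contributions --- showing they assemble exactly into the M\"obius-function coefficients of the arrangement lattice --- is where a careful deletion-restriction or Orlik--Terao-algebra argument is needed, and is the step I expect to consume most of the proof. Once the bidegree is identified with $\sum_i w_i t^{d-i}$ (up to sign/reversal), invoking Theorem~\ref{thm:eulerchar} for the top coefficient ($=$ number of bounded regions, reconciling with Step~1) and the Brylawski identity for the full polynomial closes the argument.
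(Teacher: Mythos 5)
Your Step 1 is fine: since a linear $X$ is smooth, Theorem \ref{thm:eulerchar} plus the Orlik--Solomon computation of $\chi$ of a complex arrangement complement plus Zaslavsky's theorem gives the first assertion, which is exactly the chain the paper attributes to Varchenko \cite{Varchenko} and Orlik--Terao \cite{Orlik-Terao}; your remark about coincident hyperplanes is also harmless, since parallel elements contribute degree-one broken circuits and do not change the $h$-polynomial. The problem is Step 2, and it is the gap you yourself flag as the ``main obstacle'': you never actually compute the class of the \emph{closure} $\sL_X$. Knowing that $\sL_X$ is birational to a $\PP^{n-d}$-bundle over $\PP^d$ does not determine its class in $H^*(\PP^n\times\PP^n)$, and the ``bundle'' $p\star(X^\perp+\mathbf{1})$ is only a vector bundle over $X\backslash\sH$ (its fibers jump along $X\cap\sH$), so its ``Segre/Chern classes over $X$'' are not defined; moreover one cannot integrate Chern classes over the noncompact $X\backslash\sH$ to get the intersection numbers $b_i$. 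Saying that the boundary corrections ``assemble exactly into the M\"obius-function coefficients'' is a restatement of the theorem, not an argument, and no mechanism is proposed for running deletion--restriction directly on the bidegrees of the embedded closures (that would require comparing likelihood correspondences of the triple arrangement/deletion/restriction, which you do not set up).

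The paper fills precisely this hole with the characteristic-class machinery of Section \ref{Proofs}: because an essential arrangement complement is sch\"on (via the wonderful compactification of De Concini--Procesi \cite{DP}), Theorem \ref{proCSM2} (i.e.\ \cite[Theorem 2]{Huh1}) identifies $B_X(p,u)$, up to sign, with the CSM class $c_{SM}(\mathbf{1}_{X\backslash\sH})$ --- the point being that on a simple normal crossing compactification the open bundle $\ker\gamma^\vee$ extends as the kernel of the evaluation map into $\Omega^1(\log D)$, so the closure \emph{is} a projectivized vector bundle there and its class is a genuine Chern/Segre class computation. The combinatorial identification is then done at the level of constructible functions: CSM classes satisfy inclusion--exclusion, so they obey the same deletion--restriction recursion as the characteristic polynomial, and induction on the number of hyperplanes gives Proposition \ref{characteristic}; the passage to the broken-circuit $h$-vector is the identity (\ref{eq:chih}), as you intended. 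So your outline points in a workable direction (and your Orlik--Terao-algebra instinct matches the alternative route of \cite{Denham-Garrousian-Schulze}), but as written the decisive step --- controlling $\sL_X$ over $X\cap\sH$ --- is missing, and supplying it is essentially the content of Theorem \ref{proCSM2} and Proposition \ref{characteristic}.
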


We need to explain the second assertion. The hyperplane
arrangement  $X \cap \sH$ consists of the intersections
of the $n+2$ hyperplanes in $\sH$ with $X \simeq \PP^d$. We
regard these as hyperplanes through the origin in $\R^{d+1}$. They
define a matroid $M$ of rank $d+1$  on $n+2$ elements.
We identify these elements
with the variables $x_1,x_2,\ldots,x_{n+2}$.
For each circuit $C$ of $M$ let  
$m_C = (\prod_{i \in C} x_i)/x_j$ 
where $j$ is the smallest index such that $x_j \in C$.
The {\em broken circuit complex} of $M$ is the 
simplicial complex with Stanley-Reisner ring
$\, \mathbb{R}[x_1,\ldots,x_{n+2}]/ \langle \,m_C \,: \,C \,\,\hbox{circuit of} \,\, M \,\rangle$.
See \cite[\S 1.1]{MS} for Stanley-Reisner basics.
The Hilbert series of this graded ring has the form
$$
\frac{h_0 + h_1 z + \cdots + h_{d} z^d}{(1-z)^{d+1}}.
$$
What is being claimed in Theorem \ref{LinearCase} is that
the bidegree of $\mathcal{L}_X$ equals
\begin{equation}
\label{eq:linearBidegree}
 B_X(p,u) \quad = \quad
(h_0 u^d + h_1 p u^{d-1}  + h_2 p^2 u^{d-2} + \cdots + h_d p^d) \cdot p^{n-d}
\end{equation}
Equivalently, this is the class of $\sL_X$ in the cohomology ring
\[
H^*(\mathbb{P}^n \times \mathbb{P}^n;\mathbb{Z}) =
\mathbb{Z}[p,u]/\langle p^{n+1}, u^{n+1} \rangle.
\]
There are several (purely combinatorial) definitions of the invariants $h_i$
of the matroid $M$. For instance, they are
coefficients of the following specialization of the
{\em characteristic polynomial}:
\begin{equation}
\label{eq:chih}
\chi_M(q+1)\quad = \quad  q \cdot \Big(h_0 q^d-h_{d-1}q^{d-1}+\cdots+(-1)^{d-1} h_1 q +(-1)^d h_0\Big).
\end{equation}
Theorem \ref{LinearCase} was used in \cite{Huh0} to prove a conjecture of Dawson,
stating that the sequence  $h_0,h_1, \ldots,h_d$ is log-concave,
 when $M$ is representable over a field of characteristic zero.

The first assertion in Theorem \ref{LinearCase} was  proved by Varchenko in \cite{Varchenko}.
For definitions and characterizations of the characteristic polynomial $\chi$, and many 
pointers to matroid basics, we refer to \cite{OTBook}.
 A proof of the second assertion was given by Denham et al. in a slightly different setting \cite[Theorem 1]{Denham-Garrousian-Schulze}. We give a proof in Section \ref{Proofs} following \cite[\S 3]{Huh1}.
The ramification locus of the likelihood fibration
${\rm pr}_2:\mathcal{L}_X \rightarrow \PP^n_u\,$ is known as the 
{\em entropic discriminant} \cite{SSV}.

\begin{example} \rm
Let $d=2$ and $ n=4$, so $X$ is a plane in $\PP^4$, defined
by two linear forms
\begin{equation}
\label{eq:ex24A}
\begin{matrix}
 c_{10} p_0 + c_{11} p_1 + c_{12} p_2 + c_{13} p_3 + c_{14} p_4 & = &  0, \\
 c_{20} p_0 + c_{21} p_1 + c_{22} p_2 + c_{23} p_3 + c_{24} p_4 & = & 0. \\
 \end{matrix}
\end{equation}
Following Theorem  \ref{LinearCase}, we view
$X \cap \mathcal{H} $ as an arrangement of five lines in the affine plane
$$  \{\, p \in X \,: \, p_0 + p_1 + p_2 + p_3 + p_4 \not= 0 \,\} \quad \simeq \,\,\, \mathbb{C}^2. $$
Hence, for generic $c_{ij}$, the ML degree of $X$ is equal to $6$,
the number of bounded regions of this arrangement.
The condition $u \,\in\, p \star (X^\perp + {\bf 1})$ 
in Proposition \ref{prop:linLC} translates into
\begin{equation}
\label{eq:ex24B} {\rm rank} \begin{bmatrix}
u_0 & u_1 & u_2 & u_3 & u_4 \\
p_0 & p_1 & p_2 & p_3 & p_4 \\
 c_{10} p_0 & c_{11} p_1 & c_{12} p_2 & c_{13} p_3 & c_{14} p_4  \\
  c_{20} p_0 & c_{21} p_1 & c_{22} p_2 & c_{23} p_3 & c_{24} p_4 
  \end{bmatrix}
\,  \leq \,3 .  
\end{equation}
The $4 \times 4$-minors of this $4 \times 5$-matrix,
together with the two linear forms defining $X$,
form a system of equations that has six solutions in $\PP^4$, for generic $c_{ij}$.
All solutions have real coordinates. In fact, there is one solution in each bounded region
of $X \backslash \mathcal{H}$.
The likelihood correspondence $\mathcal{L}_X$ is the fourfold in $\PP^4 \times \PP^4$
given by the equations (\ref{eq:ex24A}) and (\ref{eq:ex24B}). 

We now illustrate the second statement  in Theorem \ref{LinearCase}.
Suppose that the  real numbers $c_{ij}$ are generic, so $M$ is the uniform
matroid of rank three on six elements. The Stanley-Reisner ring of the
 broken circuit complex of $M$ equals
 $$ \R [x_1,x_2,x_3,x_4,x_5,x_6]/\langle 
x_2 x_3x_4,x_2 x_3x_5,x_2 x_3x_6,
\ldots, x_4 x_5 x_6
\rangle. $$
The Hilbert series of this graded algebra is 
$$ \frac{h_0 + h_1 z + h_2 z^2}{(1-z)^3} \quad = \quad
\frac{1 + 3 z + 6 z^2}{(1-z)^3}. $$
We conclude that  the bidegree 
(\ref{eq:linearBidegree})
of the likelihood correspondence $\mathcal{L}_X$ equals
$$ B_X(p,u) \quad = \quad 6 p^4 + 3 p^3 u + p^2 u^2 . $$
For special choices of the coefficients $c_{ij}$ in  (\ref{eq:ex24A}),
    some triples of lines in the arrangement $X \cap \mathcal{H}$ may meet in a point.
    For such matroids,  the ML degree drops from $6$ to some integer between $0$ and $5$.
  We recommend it as an exercise to the reader to explore these cases.
 For instance, can you find explicit $c_{ij}$ so that the ML degree of
 $X$ equals $3$? What are the prime ideal and the bidegree of $\mathcal{L}_X$ in that case? 
 How can the ML degree of $X$ be $0$ or~$1$?
  \hfill $\diamondsuit$
\end{example}

It would be interesting to know which statistical model $X$ in $\mathbb{P}^n$ defines the likelihood correspondence $\sL_X$ which is a complete intersection in $\mathbb{P}^n \times \mathbb{P}^n$.
When $X$ is a linear subspace of $\mathbb{P}^n$, this question is closely related
to the concept of {\em freeness} of a hyperplane arrangement.

 \begin{proposition}\label{CI}
If the hyperplane arrangement $X \cap \sH$ in $X$ is free, then the likelihood correspondence $\mathcal{L}_X$ is an ideal-theoretic complete intersection in $\mathbb{P}^n \times \mathbb{P}^n$.
\end{proposition}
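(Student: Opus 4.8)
The plan is to make the defining equations in Proposition~\ref{prop:linLC} explicit and count them against the codimension of $\sL_X$. By Theorem~\ref{thm:finite-to-one}, $\sL_X$ has dimension $n$ in $\PP^n \times \PP^n$, hence codimension $n$. The condition $p \in X$ contributes $n-d$ linear forms (in the $p$-variables alone). The condition $u \in p \star (X^\perp + {\bf 1})$ is, after saturating along $\sH$, a statement that the vector $u/p$ lies in the $(n-d+1)$-dimensional space $X^\perp + {\bf 1}$; equivalently, the $d$ linear functionals on $\R^{n+1}$ cutting out $X^\perp+{\bf 1}$ vanish on $u/p$. Clearing the denominator $p_0\cdots p_n$ turns each such functional into a bihomogeneous polynomial of bidegree $(1,1)$ in $(p,u)$. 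So altogether we have $n-d$ equations of bidegree $(1,0)$ and $d$ equations of bidegree $(1,1)$ — a total of $n$ equations. The claim is that, when $X \cap \sH$ is free, these $n$ equations generate the prime ideal of $\sL_X$, which, having the expected codimension $n$, then exhibits $\sL_X$ as an ideal-theoretic complete intersection.

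First I would reduce to the affine chart on $X \cong \PP^d$, working with the $d$ bidegree-$(1,1)$ equations modulo the $p$-linear equations; so the real content is a statement purely about the matroid $M = X \cap \sH$ of rank $d+1$. Here is where freeness enters: Terao's theorem says $M$ free (with exponents $e_1,\dots,e_{d+1}$, one of which is $1$ corresponding to the Euler derivation) is equivalent to the module of logarithmic derivations $D(M)$ being free with a basis $\theta_1,\dots,\theta_{d+1}$ of those degrees. The $d$ bidegree-$(1,1)$ generators of $I(\sL_X)$ are precisely obtained by pairing the $u$-covector against a basis of $D(M)$ (the derivations encode exactly the coefficient vectors one is allowed to add to $u/p$, i.e.\ the tangent directions to $X$ together with the Euler vector), so freeness is what guarantees that $d$ derivations suffice to generate — without freeness one needs more generators and loses the complete intersection property. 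I would invoke the result of Denham--Garrousian--Schulze \cite{Denham-Garrousian-Schulze} underlying the second part of Theorem~\ref{LinearCase}, which already packages the generators of $I(\sL_X)$ in terms of a presentation of $D(M)$; freeness makes that presentation have exactly $d+1$ generators and no relations, cutting the generator count to the minimum.

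The remaining step is to verify that these $n$ equations actually generate the \emph{prime} ideal ideal-theoretically, not merely up to radical or up to saturation. For this I would check that the ideal $J$ generated by the $n-d$ linear forms and the $d$ saturated bidegree-$(1,1)$ forms is already saturated with respect to $\sH$: a complete intersection ideal of the expected codimension in a Cohen--Macaulay (indeed smooth) ambient space is unmixed, so $J$ has no embedded components and no lower-dimensional components, hence if $V(J) = \sL_X$ set-theoretically and $J$ is radical then $J = I(\sL_X)$. Radicality should follow because $\sL_X$ is irreducible (Theorem~\ref{thm:finite-to-one}) and a complete intersection with the same support as an irreducible reduced variety, of the same dimension, is generically reduced along that variety, hence reduced by unmixedness (Serre's criterion: $R_0 + S_1$).

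The main obstacle I anticipate is exactly this last point — ruling out that clearing denominators introduces spurious components supported on $\sH$ so that the naive complete intersection ideal is strictly smaller than $I(\sL_X)$, i.e.\ that the saturation step in Proposition~\ref{prop:linLC} is genuinely unnecessary under freeness. Resolving it requires showing that the Jacobian-type ideal built from a free basis of $D(M)$ is \emph{already} $\sH$-saturated, which is essentially the statement that the logarithmic derivation module detects the non-free-resolution behavior along each hyperplane correctly; this is where Saito's criterion (the determinant of the coefficient matrix of $\theta_1,\dots,\theta_{d+1}$ equals, up to scalar, the product of the defining linear forms of $M$) does the work, and I would lean on it to conclude that no extra saturation is possible. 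Everything else — the bidegree bookkeeping, the codimension count, the appeal to Cohen--Macaulayness — is routine once that anchor is in place.
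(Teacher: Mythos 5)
Your overall strategy---cut $\sL_X$ by the $n-d$ linear forms defining $X$ together with $d$ bihomogeneous forms obtained by pairing ${\rm dlog}(\ell_u)$ with a basis of the module of logarithmic derivations of the arrangement, then argue that these $n$ forms, having the expected codimension ${\rm codim}\,\sL_X=n$ from Theorem \ref{thm:finite-to-one}, generate the prime ideal---is the right one: it is in essence the argument of Cohen--Denham--Falk--Varchenko, and the paper's own proof is nothing more than a citation of \cite[Theorem 2.13]{CDFV} and \cite[Corollary 3.8]{CDFV} (that, not \cite{Denham-Garrousian-Schulze}, is the reference that packages the generators). However, your bookkeeping of the equations is wrong. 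Contracting ${\rm dlog}(\ell_u)$ against a homogeneous basis $\theta_1=E,\theta_2,\ldots,\theta_{d+1}$ of $D(M)$ with exponents $1,e_2,\ldots,e_{d+1}$ gives forms of bidegree $(e_j-1,1)$, not $(1,1)$ (the Euler contraction vanishes identically because the weights $(u_0,\ldots,u_n,-u_+)$ sum to zero). If all $d$ forms had bidegree $(1,1)$, the class of $\sL_X$ would be $p^{n-d}(p+u)^d$, i.e.\ $h_i=\binom{d}{i}$, contradicting Theorem \ref{LinearCase} for any free arrangement with an exponent $\ge 3$. Similarly, clearing the denominator $p_0\cdots p_n$ from a functional $\sum_i \lambda_i u_i/p_i$ cutting out $X^\perp+\mathbf{1}$ produces a form of bidegree $(n,1)$, not $(1,1)$, and such forms are not minimal generators of $I(\sL_X)$; the minimal generators really do come from $D(M)$, as in Proposition \ref{prop:linLC} after saturation.

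The more serious issue is the step you yourself flag as the main obstacle: showing that the ideal $J$ generated by the $n-d$ linear forms and the $d$ contractions is already $\sH$-saturated, equivalently that $V(J)$ has no irreducible components lying over the arrangement, so that $V(J)=\sL_X$ set-theoretically. This is precisely the content of the cited results in \cite{CDFV}, and your appeal to Saito's criterion does not supply it: Saito's criterion certifies that $\theta_1,\ldots,\theta_{d+1}$ form a free basis of $D(M)$, but it gives no bound on the dimension of $V(J)$ over the flats of $X\cap\sH$, which is what excludes spurious components; in \cite{CDFV} this is where the codimension estimate for the logarithmic ideal, using freeness, does the real work. Your subsequent Serre-criterion argument ($R_0+S_1$, unmixedness of a complete intersection) is fine, but it is conditional on exactly this set-theoretic equality, so the gap propagates. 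Once that one step is established (or simply cited, as the paper does), the rest of your outline goes through.
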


\begin{proof}
For the definition of freeness see \S 1 in the paper
\cite{CDFV} by Cohen, Denman, Falk and Varchenko.
The proposition  is implied by their  \cite[Theorem 2.13]{CDFV} and \cite[Corollary 3.8]{CDFV}.
\end{proof}

Using
Theorem \ref{LinearCase}, this provides a likelihood geometry proof of Terao's theorem that the characteristic polynomial of a free arrangement factors into integral linear forms \cite{Terao}.

\section{Second Lecture}

In our newspaper we frequently read about studies aimed at
proving that a  behavior or food causes a certain 
medical condition. We begin the second lecture with an
introduction to statistical issues arising in such studies.  
The ``medical question''  we wish to address is {\em Does Watching Soccer on TV Cause Hair Loss?}
\ We learned this amusing example from  \cite[\S 1]{MSS}.

In a fictional study,
$296$ British subjects aged between $40$ to $50$ were interviewed about their hair length
and how many hours per week they watch soccer (a.k.a.~``football'') on TV.
Their responses are summarized in the  following {\em contingency table}
of format  $3 \times 3$:
$$ U \quad \, = \,\,\,\,
\bordermatrix{
 &  \hbox{lots of hair} & \hbox{medium hair} & \hbox{little hair} \cr
\hbox{$\leq 2$ hrs} & 51 & 45 & 33 \cr
\hbox{$2$--$6$ hrs} &  28 & 30 & 29 \cr
\hbox{$\geq 6$ hrs} & 15 & 27 & 38 }
    $$
    For instance, $29$ respondents reported having little hair and watching between
    $2$ and $6$ hours of soccer on TV per week.
    Based on these data, are these two random variables independent, or
    are we inclined to believe that watching soccer on TV and hair loss are correlated?

On first glance, the latter seems to be the case. Indeed, being independent
means that the data matrix $U$ should be close to a rank $1$ matrix. However,
all $2 \times 2$-minors of $U$ are strictly positive, indeed by quite a margin, and this
suggests a positive correlation.

However, this interpretation is deceptive. A much better explanation of our data can be given by
identifying a certain hidden random variable. That hidden variable
is {\em gender}. Indeed, suppose that among the respondents
$126$ were males and $170$ were females.
Our data matrix $U$ is then the sum of the male table and the female table, maybe as follows:
\begin{equation}
\label{eq:genderdecomposition}
U \,\, = \,\,
\begin{pmatrix}
 3 & 9 & 15 \\
  4 & 12 & 20 \\
  7 & 21 & 35 
  \end{pmatrix} \,\, +\,\,
\begin{pmatrix}
 48 & 36 & 18 \\
 24 & 18 & 9  \\
    8 &  6 &  3  
    \end{pmatrix}.
    \end{equation}
Both of these tables have rank $1$, hence $U$ has rank $2$.
Hence, the appropriate null hypothesis $H_0$ for analyzing our situation is not
independence but it is  {\em conditional independence}:
$$ H_0: \,\,\, \hbox{\em  Soccer on TV
 and  Hair Loss are  Independent  given Gender.} $$
And, based on the data $U$, we most definitely do not reject that null hypothesis.

\smallskip

The key feature of the matrix $U$ above was that it has rank $2$.
We now define low rank matrix models in general.
Consider two discrete random variables $X$ and $Y$ having $m$ and $n$
states respectively. Their {\em joint
probability distribution} is written as an $m \times n$-matrix
$$
P \quad = \quad \begin{pmatrix}
p_{11} & p_{12} & \cdots & p_{1n} \\
p_{21} & p_{22} & \cdots & p_{2n} \\
 \vdots & \vdots & \ddots & \vdots \\
p_{m1} & p_{m2} & \cdots & p_{mn} \\
\end{pmatrix}
$$
whose entries are nonnegative and sum to $1$.
Here $p_{ij}$ represents the probability that $X$ is in state $i$
and $Y$ is in state $j$. The of all probability distributions
is the standard simplex $\Delta_{mn-1}$ of dimension $mn-1$.
We write $\mathcal{M}_r$ for the  manifold of  rank $r$ matrices in $\Delta_{mn-1}$.

The matrices $P$ in  $\mathcal{M}_1$ represent  independent distributions.
Mixtures of $r$ independent distributions correspond to matrices in $\mathcal{M}_r$.
As always in applied algebraic geometry, we can make any problem 
that involves semi-algebraic sets progressively easier by three steps:
\begin{itemize}
\item disregard inequalities,
\item replace real numbers with complex numbers,
\item replace affine space by projective space.
\end{itemize}
In our situation, this leads us to replacing
$\mathcal{M}_r$ with its  Zariski closure 
 in complex  projective space $\PP^{mn-1}$.
 This Zariski closure is the projective variety  $\mathcal{V}_r$  of
complex $m {\times} n$ matrices of  rank $\leq r$. 
Note that $\mathcal{V}_r$ is singular along $\mathcal{V}_{r-1}$.
The codimension of  $\mathcal{V}_r$ is $(m-r)(n-r)$. It is a non-trivial exercise to  write 
the degree of $\mathcal{V}_r$ in terms of $m,n,r$. Hint: \cite[Example 15.2]{MS}.

Suppose now that i.i.d.~samples are drawn from an unknown joint
distribution on our two random variables $X$ and $Y$.
We summarize the resulting data in a contingency table
$$
U \quad = \quad \begin{pmatrix}
u_{11} & u_{12} & \cdots & u_{1n} \\
u_{21} & u_{22} & \cdots & u_{2n} \\
 \vdots & \vdots & \ddots & \vdots \\
u_{m1} & u_{m2} & \cdots & u_{mn} \\
\end{pmatrix}.
$$
The entries of the matrix $U$ are nonnegative integers whose sum is $u_{++}$.

The {\em likelihood function} for the contingency table $U$ is the following
function on $\Delta_{mn-1}$:
$$
P \quad \mapsto \quad
\binom{u_{++}}{u_{11} u_{12} \cdots u_{mn}}
\prod_{i=1}^m \prod_{j=1}^n p_{ij}^{u_{ij}} .
$$
Assuming fixed sample size, this is the likelihood of observing the data $U$
given an unknown  probability distribution $P$ in $\Delta_{mn-1}$.
In what follows we suppress the multinomial coefficient.
Furthermore, we regard the likelihood function as a rational function on $\PP^{mn-1}$,
so we write
$$ \ell_U \,\,= \,\, \frac{ \prod_{i=1}^m \prod_{j=1}^n p_{ij}^{u_{ij}}}{p_{++}^{u_{++}}}. $$
We wish to find a low rank probability matrix $P$ that best explains
the data $U$. Maximum likelihood estimation means solving
the following optimization problem:
\begin{equation}
\label{eq:matrixMLE}
\hbox{ Maximize 
$\,\ell_U(P)\,$ subject to $\,P \in \mathcal{M}_r $.}
\end{equation}
The optimal solution $\hat P$ is a rank $r$ matrix. 
This is the {\em maximum likelihood estimate} for $U$.

For $r = 1$, the independence model,
 the maximum likelihood estimate $\hat P$ is obtained
from the data matrix $U$ by the following formula,
already seen for $m=n=2$ in (\ref{eq:MLE22}).
Multiply the vector of row sums with the
vector of column sums and divide by the sample size:
\begin{equation}
\label{eq:rank1MLE}
\hat{P} \,\,\, = \,\,\,
\frac{1}{(u_{++})^2} \cdot
\begin{pmatrix}
u_{1+} \\ 
u_{2+} \\
\vdots \\
u_{m+} \end{pmatrix} \cdot
\begin{pmatrix} u_{+1} & u_{+2} & \cdots & u_{+n} \end{pmatrix}.
\end{equation}
Statisticians, scientists and engineers refer to such a formula
as an ``analytic solution''. In our view, it would be more appropriate
to call this an ``algebraic solution''. After all, we are here using
algebra not analysis. Our algebraic solution for $r=1$ reveals the following points:
\begin{itemize}
\item The MLE $\hat{P}$ is a {\em rational function} of the data $U$.
\item The function $U \mapsto \hat{P}$ is an algebraic function of degree $1$.
\item The ML degree of the independence model $\mathcal{V}_1$ equals $1$.
\end{itemize}
We next discuss the smallest case when the ML degree is larger than $1$.

\begin{example}\label{ex:33determinant} \rm
Let $m=n=3$ and $r=2$.
Our MLE problem is to
maximize
\[  \ell_U \,=\,
 (p_{11}^{u_{11}} p_{12}^{u_{12}} p_{13}^{u_{13}}
  p_{21}^{u_{21}} p_{22}^{u_{22}} p_{23}^{u_{23}}
   p_{31}^{u_{31}} p_{32}^{u_{32}} p_{33}^{u_{33}})/
   p_{++}^{u_{++}}
\]
 subject to the constraints $P \geq 0$ and ${\rm rank}(P) = 2$,
 where $P = (p_{ij})$ is a $3 {\times} 3$-matrix of unknowns.
 The equations that characterize the critical points of this optimization problem~are
 $$ {\rm det}(P)\,\, = \,\,\,
 \begin{small}
 \begin{matrix}
 \phantom{-} p_{11} p_{22} p_{33}
-  p_{11} p_{23} p_{32}
- p_{12} p_{21} p_{33} \\
+ p_{12} p_{23} p_{31}
+ p_{13} p_{21} p_{32}
- p_{13} p_{22} p_{31}
\end{matrix} \end{small}
\,\,\,=\,\,\, 0
  $$
and the vanishing of the $3 \times 3$-minors of the following $3 \times 9$-matrix:
$$ 
\begin{bmatrix}
 u_{11} & u_{12} & u_{13} & u_{21} & u_{22} & u_{23} & u_{31} & u_{32} & u_{33} \\
  p_{11} & p_{12} & p_{13} & p_{21} & p_{22} & p_{23} & p_{31} & p_{32} & p_{33} \\
   p_{11}a_{11}\! &\!\! p_{12}a_{12} \!\! & \!p_{13}a_{13} \!\! & 
      p_{21}a_{21}\!\! &\! p_{22}a_{22} \!\! & \!p_{33}a_{33} \!\! & 
      p_{31}a_{31}\!\! &\! p_{32}a_{32} \!\! & \!p_{33}a_{33}
\end{bmatrix}
$$
where $a_{ij} = \frac{\partial {\rm det}(P)}{\partial p_{ij}}$ is the cofactor of $p_{ij}$ in $P$.
For random positive data $u_{ij}$, these equations have $10$ solutions with ${\rm rank}(P) = 2$
in $\PP^8 \backslash \mathcal{H}$. Hence the ML degree of $\mathcal{V}_2$ is
$10$. If we regard the $u_{ij}$ as unknowns, then saturating the
above determinantal equations with respect to
$\,\mathcal{H} \cup \mathcal{V}_1\,$ yields the
prime ideal of  the likelihood correspondence
$\mathcal{L}_{{\mathcal{V}_2}} \subset \PP^8 \times \PP^8 $. 
See Example \ref{ex:manyclasses} 
for  the bidegree and other enumerative invariants of the
$8$-dimensional variety $\mathcal{L}_{\mathcal{V}_2}$.
\hfill $ \diamondsuit $
\end{example}

Recall from Definition \ref{def:MLD_and_LC} that
the {\em ML degree} of  a statistical model (or a projective variety)
is the number of critical points of the likelihood function for generic data.

\begin{theorem}
\label{thm:MLvalues}
The known values for the ML degrees of the determinantal varieties $\mathcal{V}_r$ are
$$
\begin{matrix}
        & \!(m,n) = \!\! & \!(3,3) & \!\!(3,4)\! & \!(3,5)\! & \! (4,4) & \! (4,5)  & (4,6)  & (5,5) \\
r=1 & &      1 &   1   &   1 & 1 &  1 &  1   & 1  \\
r=2 & &      10 & 26 & {\bf 58} & {\bf 191} & {\bf 843} &  {\bf 3119} &  \, {\bf 6776} \\
r=3 & &      1   & 1 & 1 & {\bf 191} &  {\bf 843} &  {\bf 3119} & {\bf 61326}\, \\
r=4 & &       & & &  1      &1 & 1 &\, {\bf 6776} \\
r=5 & &       & & &        &   &     &  \,\, 1 \\
\end{matrix}
$$
\end{theorem}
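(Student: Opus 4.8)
The plan is to split the table into entries forced by general principles and entries requiring a dedicated computation. The row $r=1$ is the independence model: by the explicit rational formula (\ref{eq:rank1MLE}) the maximum likelihood estimate $\hat P$ is a single-valued function of the data $U$, so ${\rm pr}_2:\mathcal{L}_{\mathcal{V}_1}\to\PP^{mn-1}_u$ is birational and every entry in that row is $1$. When $r=\min(m,n)$ the rank condition is vacuous, so $\mathcal{V}_r=\PP^{mn-1}$ and its ML degree is $1$, the unique critical point of $\ell_U$ being $U$ itself; this accounts for $(3,3),(3,4),(3,5)$ at $r=3$, all square cases at $r=m$, and $(5,5)$ at $r=5$. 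Finally, the duality theorem of Draisma and Rodriguez \cite{DR} matches the likelihood geometry of $\mathcal{V}_r$ with that of $\mathcal{V}_{\min(m,n)+1-r}$; this is precisely the symmetry visible in the table (the two $191$'s for $(4,4)$, the two $843$'s for $(4,5)$ and $(4,6)$, the two $6776$'s for $(5,5)$), so only one member of each symmetric pair has to be computed.

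What remains are the values $10,26,58,191,843,3119,6776,61326$, of which $10$ and $26$ already appear in \cite{HKS, CHKS} (and $10$ is re-derived in Example \ref{ex:33determinant}). For each remaining value I would write down the critical equations exactly as in Example \ref{ex:33determinant}: the $(r{+}1)\times(r{+}1)$ minors of $P$ together with the condition that the matrix with rows $u$, $p$, and the $p$-scaled partial derivatives of those minors drops rank, i.e.\ that ${\rm dlog}(\ell_U)$ lies in the conormal space of $\mathcal{V}_r$ at $P$. One then saturates by $\mathcal{H}\cup\mathcal{V}_{r-1}$ to discard the extraneous components supported on the distinguished hyperplanes and on the singular locus $\mathcal{V}_{r-1}$ of $\mathcal{V}_r$; by Theorem \ref{thm:finite-to-one} the result is a finite scheme for generic $U$, and the ML degree is its length. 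Note that Theorem \ref{thm:eulerchar} offers no shortcut, since $\mathcal{V}_r$ is singular, so the count cannot be replaced by an Euler characteristic of $\mathcal{V}_r\backslash\mathcal{H}$.

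To evaluate that length I would use numerical algebraic geometry: fix random complex data $U$, build a start system, and track the solution paths of the critical system in {\tt Bertini} \cite{Bertini} via regeneration or a parameter homotopy, then count the endpoints lying in $\mathcal{V}_r\setminus(\mathcal{H}\cup\mathcal{V}_{r-1})$. The main obstacle is certifying that this number is exact rather than a lower bound: for $(5,5)$ with $r=3$ the path count already reaches tens of thousands, and delicate endgames are needed near $\mathcal{H}$ and near $\mathcal{V}_{r-1}$, so the raw count must be backed up by a trace test and a monodromy loop confirming that the witness set is complete. For the smallest case $(m,n)=(3,3)$ I would additionally cross-check symbolically, computing a Gr\"obner basis of the saturated ideal of $\mathcal{L}_{\mathcal{V}_2}$ and reading off the length of a generic fiber of ${\rm pr}_2$; and for every dual pair the Draisma--Rodriguez symmetry supplies a final internal consistency check, since two independent homotopies must return the same integer. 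Since no general closed-form formula for these ML degrees is available, each value is ultimately an isolated certified computation rather than a consequence of a single theorem.
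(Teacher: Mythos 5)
Your proposal is correct in substance and matches the paper's high-level strategy -- the table is not a consequence of one theorem but a collection of certified computations, with the $r=1$ and $r=\min(m,n)$ rows trivial and the remaining entries computed symbolically (the $10$ and $26$ from \cite{HKS}) or numerically in {\tt Bertini} (the bold entries from \cite{HRS}) -- but it differs in the key computational formulation. You propose to work with the implicit critical equations exactly as in Example \ref{ex:33determinant}, i.e.\ the $(r{+}1)\times(r{+}1)$ minors of $P$ together with the rank-drop condition on the augmented Jacobian, followed by saturation with respect to $\mathcal{H}\cup\mathcal{V}_{r-1}$ and path tracking with trace-test and monodromy certification. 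The paper states explicitly that this geometric formulation was \emph{replaced} in \cite{HRS} by a parametric representation of the rank constraint: writing $P$ in the block form built from $P_1,L_1,R_1$ and introducing Lagrange multipliers $\Lambda$ yields the square system $P \star (R\cdot\Lambda\cdot L)^T + u_{++}\cdot P = U$ of $mn$ equations in $mn$ unknowns. That squaring-up is what made the large cases (e.g.\ $61326$ for $(5,5)$, $r=3$) feasible: it automatically places the solutions at rank exactly $r$, so no saturation by $\mathcal{V}_{r-1}$ or $\mathcal{H}$ is needed, and it avoids tracking an overdetermined minor system, which for your route would require squaring up or witness-set machinery and would likely be impractical at the largest sizes. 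Two smaller remarks: historically the duality was a \emph{conjecture} extracted from this very table and only later proved by Draisma--Rodriguez, so the original computations did all entries independently; using \cite{DR} now to halve the work and as a consistency check is legitimate but reverses the logic of the paper. Also, for the $r=1$ row the explicit formula (\ref{eq:rank1MLE}) exhibits one critical point but does not by itself rule out further complex critical points; uniqueness is Birch's theorem for the toric (Segre) variety $\mathcal{V}_1$, as in Proposition \ref{prop:toricLC}.
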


The numbers $10$ and $26$ were computed back in 2004 using 
the symbolic software {\tt Singular}, and
they were reported in \cite[\S 5]{HKS}. The bold face numbers
were found in 2012 in \cite{HRS} using the numerical software {\tt Bertini}.
In what follows we shall describe some of the details.

\begin{remark} \rm
Each determinantal variety $\mathcal{V}_r$ 
is singular along the smaller variety $\mathcal{V}_{r-1}$.
Hence, the very affine variety
$ \mathcal{V}_r \backslash \mathcal{H}$
is singular for $r \geq 2$, so Theorem \ref{thm:eulerchar} does not apply.
Here, $\mathcal{H} =  \{p_{++} \prod \! p_{ij} = 0 \} $.
According to Conjecture \ref{conj:eulerpositive},
the ML degree above provides a lower bound for the signed topological Euler characteristic of  $\mathcal{V}_r \backslash \mathcal{H}$.
The difference between the two numbers reflect the nature of the singular
locus $\,\mathcal{V}_{r-1} \backslash \mathcal{H}\,$
inside $\,\mathcal{V}_r \backslash \mathcal{H}$.
For plane curves that have nodes and cusps, we encountered this issue 
in Examples \ref{ex:nodecusp} and \ref{ex:cautionary}.
\end{remark}

We begin with a geometric description of the likelihood correspondence.
An $m \times n$-matrix $P$ is a regular point in $\mathcal{V}_r$ if and only if
${\rm rank}(P)= r$. 
The tangent
space $T_P$ is a subspace of dimension
$rn{+}rm{-}r^2$ in $\C^{m \times n}$.
Its orthogonal complement
 $T_P^\perp$ has dimension  $(m{-}r)(n{-}r)$.

The partial derivatives of the log-likelihood 
function  $ {\rm log}(\ell_U)\,$ on $\,\PP^{mn-1}\,$ are
$$
\frac{\partial { {\rm log}(\ell_U)}}{ \partial p_{ij} } \,\, = \,\, \frac{u_{ij} }{p_{ij}} - \frac{u_{++}}{p_{++}}  .
$$

\begin{proposition} \label{prop:critical}
An $m \times n$-matrix $P$ of rank $r$ is a critical point for
 ${\rm log}(\ell_U)$ on $\mathcal{V}_r$ if and only if
the linear  subspace  $T_P^\perp$ contains the matrix
$$ \left[ \,\frac{u_{ij} }{p_{ij}} - \frac{u_{++}}{p_{++}} \, \right]_{i=1,\ldots,m \atop j = 1,\ldots,n}$$
\end{proposition}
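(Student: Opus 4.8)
The plan is to invoke the standard description of constrained critical points. If ${\rm rank}(P)=r$ then $P$ is a smooth point of $\mathcal{V}_r$, so the projective point $[P]\in\mathcal{V}_r\backslash\mathcal{H}$ is a critical point of $\log(\ell_U)|_{\mathcal{V}_r}$ exactly when the differential $d\log(\ell_U)$ at $[P]$ annihilates the tangent space of $\mathcal{V}_r$ there. Identifying a covector $\sum_{i,j} c_{ij}\,dp_{ij}$ on $\mathbb{C}^{m\times n}$ with the matrix $[c_{ij}]$ through the trace form $\langle X,Y\rangle=\sum_{i,j}x_{ij}y_{ij}$ (the identification implicit in the statement, since $T_P^\perp$ is formed inside $\mathbb{C}^{m\times n}$), this says that the \emph{gradient} of $\log(\ell_U)$ at $P$ must lie in $T_P^\perp$. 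Thus the proposition follows once we identify that gradient.

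First I would compute it. From $\log(\ell_U)=\sum_{i,j}u_{ij}\log p_{ij}-u_{++}\log p_{++}$ together with $\partial p_{++}/\partial p_{ij}=1$, the formula displayed just before the proposition gives
\[
\frac{\partial\log(\ell_U)}{\partial p_{ij}} \;=\; \frac{u_{ij}}{p_{ij}}-\frac{u_{++}}{p_{++}},
\]
so the gradient of $\log(\ell_U)$ at $P$ is precisely the matrix $M:=\bigl[\,u_{ij}/p_{ij}-u_{++}/p_{++}\,\bigr]$ from the statement. (Here we of course need $P\notin\mathcal{H}$, which is part of the meaning of ``critical point of $\ell_U$''; see Definition \ref{def:MLD_and_LC}.)

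The one point needing care is the passage between the affine and projective tangent spaces. Since $\mathcal{V}_r\subseteq\mathbb{C}^{m\times n}$ is a cone, the affine tangent space to $\mathcal{V}_r$ at the matrix $P$ --- which is the ``$T_P$'' of the statement, a subspace of $\mathbb{C}^{m\times n}$ of dimension $rm+rn-r^2$ --- contains the Euler line $\mathbb{C}\cdot P$, and its quotient by that line is the projective tangent space $T_{[P]}\mathcal{V}_r$. Because $\ell_U$ is homogeneous of degree $0$, the pulled-back function $\log(\ell_U)$ is constant along $\mathbb{C}\cdot P$; concretely
\[
\langle M,P\rangle \;=\; \sum_{i,j}u_{ij}-\frac{u_{++}}{p_{++}}\,p_{++} \;=\; u_{++}-u_{++} \;=\; 0,
\]
so $d\log(\ell_U)$ automatically vanishes on the Euler direction. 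Hence $d\log(\ell_U)$ annihilates $T_{[P]}\mathcal{V}_r$ if and only if it annihilates the whole affine tangent space $T_P$, i.e.\ if and only if $M\perp T_P$, i.e.\ $M\in T_P^\perp$ --- the asserted equivalence.

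Finally, although it is not needed for the statement, it is worth recording the explicit conormal space for later use: picking a rank factorization $P=A_0B_0$ with $A_0\in\mathbb{C}^{m\times r}$, $B_0\in\mathbb{C}^{r\times n}$, one has $T_P=\{A_0B+AB_0:\,A\in\mathbb{C}^{m\times r},\,B\in\mathbb{C}^{r\times n}\}=\{X:\,X(\ker P)\subseteq{\rm im}(P)\}$, whence $T_P^\perp=\{Y\in\mathbb{C}^{m\times n}:\,P^TY=0\ \text{and}\ YP^T=0\}$, of dimension $(m-r)(n-r)$. So the membership $M\in T_P^\perp$ unwinds to the bilinear equations $P^TM=0$, $MP^T=0$. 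The only genuine obstacle in the whole argument is the cone-versus-projective bookkeeping of the third paragraph; identifying the gradient and checking $\langle M,P\rangle=0$ are immediate.
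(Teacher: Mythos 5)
Your argument is correct and is exactly the reasoning the paper intends: the paper states Proposition \ref{prop:critical} without proof, as an immediate consequence of the displayed partial derivatives of $\log(\ell_U)$ and the definition of $T_P$, which is what you spell out (including the useful check $\langle M,P\rangle=0$ handling the cone-versus-projective point). Nothing further is needed.
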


In order to get to the numbers in Theorem \ref{thm:MLvalues},
the geometric formulation was replaced 
in \cite{HRS} with a
parametric representation of the rank constraints.
The following linear algebra formulation worked well for
non-trivial computations.
 Assume $m \leq n$.
Let $P_1, R_1, L_1$ and $\Lambda$ be matrices
of unknowns of formats $r \times r$,
$r \times (n{-}r)$,  $(m{-}r) \times r$,  and $(n{-}r) \times (m {-} r)$. Set
$$
L = \begin{pmatrix} L_1 \!&\! -I_{m-r}\end{pmatrix}\! , \,\,\,
P = \begin{pmatrix} P_1  &\! P_1 R_1 \\ L_1 P_1 &\! L_1 P_1 R_1 \end{pmatrix} \!, \,\,\,
 \hbox{and} \,\,\,
R = \begin{pmatrix} R_1 \\ - I_{n-r} \end{pmatrix} \! ,
$$
where $I_{m-r}$ and $I_{n-r}$ are identity matrices. In the next statement
we use the symbol $\,\star\,$ for the Hadamard (entrywise) product of two matrices
that have the same format.

\begin{proposition}
Fix a general $m \times n$ data matrix $U$. The polynomial system 
$$  P \star (R \cdot \Lambda \cdot L)^T \,+\, u_{++} \cdot P \,\,=\,\, U
$$
consists of $mn$ equations in $mn$ unknowns. For generic $U$, it
 has finitely many complex solutions $(P_1,L_1,R_1,\Lambda)$. The 
$m {\times} n$-matrices $P$ resulting from these solutions are precisely the critical points  of the 
likelihood function $\ell_U$ on the determinantal variety $\mathcal{V}_r$.
\end{proposition}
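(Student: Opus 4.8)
The plan is to reduce the statement to the normal-space criterion of Proposition~\ref{prop:critical}, using the given parametrization to produce an explicit basis of the space $T_P^{\perp}$. Write $A$ for the $m\times r$ matrix whose blocks are $I_r$ stacked over $L_1$, and $B$ for the $r\times n$ matrix whose blocks are $I_r$ next to $R_1$, so that the displayed matrix $P$ equals $A\,P_1\,B$. A one-line computation shows $LA=0$ and $BR=0$, hence $LP=0$ and $PR=0$ for all values of the parameters. When $P_1$ is invertible, $P$ has rank exactly $r$, the rows of $L$ span the left kernel of $P$ and the columns of $R$ span its right kernel; the classical description of the tangent space of the determinantal variety at such a smooth point is $T_P\mathcal{V}_r=\{\,X\in\C^{m\times n}:LXR=0\,\}$. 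Taking orthogonal complements for the trace inner product and passing to the adjoint of $X\mapsto LXR$ gives
\[
T_P^{\perp}\;=\;\bigl\{\,L^{T}C\,R^{T}\,:\,C\in\C^{(m-r)\times(n-r)}\,\bigr\}\;=\;\bigl\{\,(R\,\Lambda\,L)^{T}\,:\,\Lambda\in\C^{(n-r)\times(m-r)}\,\bigr\},
\]
the map $\Lambda\mapsto(R\Lambda L)^{T}=L^{T}\Lambda^{T}R^{T}$ being injective because $L^{T}$ and $R^{T}$ have full rank, and its image having the correct dimension $(m-r)(n-r)$.

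Next I would substitute this into Proposition~\ref{prop:critical}: a rank-$r$ matrix $P$ is a critical point of $\ell_U$ on $\mathcal{V}_r$ if and only if the score matrix $S=\bigl[\,\tfrac{u_{ij}}{p_{ij}}-\tfrac{u_{++}}{p_{++}}\,\bigr]$ equals $(R\Lambda L)^{T}$ for some $\Lambda$. Multiplying the $(i,j)$ entry of the identity $S=(R\Lambda L)^{T}$ by $p_{ij}$ clears denominators and turns it into $P\star(R\Lambda L)^{T}+\tfrac{u_{++}}{p_{++}}\,P=U$. To remove the stray $p_{++}$ — legitimate since critical points live in $\PP^{mn-1}$ and $P$ may be rescaled — I would sum all $mn$ entries of the system $P\star(R\Lambda L)^{T}+u_{++}\,P=U$ in the statement; this yields $\langle P,(R\Lambda L)^{T}\rangle+u_{++}\,p_{++}=u_{++}$, and $\langle P,(R\Lambda L)^{T}\rangle={\rm tr}\bigl((LP)^{T}\Lambda^{T}R^{T}\bigr)=0$ because $LP=0$. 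Hence every solution of the stated system automatically has $p_{++}=1$, and on that affine slice the system is exactly the critical-point condition, with $(P_1,L_1,R_1)$ bookkeeping the parametrization and $\Lambda$ the Lagrange multiplier.

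It then remains to set up a bijection and count. From a rank-$r$ matrix $P$ whose leading $r\times r$ block $P_1$ is invertible, the off-diagonal blocks of $P$ recover $R_1$ and $L_1$ uniquely, and $\Lambda$ is then recovered uniquely from $S=(R\Lambda L)^{T}$ by the injectivity above; conversely any such tuple with $p_{++}=1$ produces a genuine rank-$r$ critical point. So, on the chart where $P_1$ is invertible, solutions of the system correspond bijectively to critical points of $\ell_U$ in $(\mathcal{V}_r)_{\rm reg}\setminus\sH$ lying in that chart, and by Theorem~\ref{thm:finite-to-one} there are finitely many of those for generic $U$, their count being the ML degree of $\mathcal{V}_r$.

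The step I expect to be the real obstacle is the genericity needed to delete the phrase ``lying in that chart'': one must show that for general $U$ every critical point of $\ell_U$ on $(\mathcal{V}_r)_{\rm reg}\setminus\sH$ has an invertible leading block, and that the system acquires no spurious solutions with $P_1$ singular — which would force ${\rm rank}\,P<r$, i.e.\ $P$ in the singular locus of $\mathcal{V}_r$. The first is a dimension count on the likelihood correspondence: the locus $B\subset\mathcal{V}_r$ of rank-$r$ matrices with singular leading block is a proper closed subset, and since ${\rm pr}_1:\mathcal{L}_{\mathcal{V}_r}\to\mathcal{V}_r$ is a projective bundle over $(\mathcal{V}_r)_{\rm reg}\setminus\sH$ by Theorem~\ref{thm:finite-to-one} and $\mathcal{L}_{\mathcal{V}_r}$ is the closure of the part over that open set, ${\rm pr}_1^{-1}(B)$ has dimension strictly below $\dim\mathcal{L}_{\mathcal{V}_r}=mn-1$, so ${\rm pr}_2\bigl({\rm pr}_1^{-1}(B)\bigr)$ is not dense in $\PP^{mn-1}_u$. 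For the second, a transversality (Jacobian-rank) argument for the system at rank-$r$ solutions, together with an analogous dimension count on the closure of the locus of rank-$(<r)$ solutions, rules out spurious components for generic $U$. Combining everything, for generic $U$ the $mn\times mn$ system has finitely many solutions and the matrices $P$ read off from them are exactly the critical points of $\ell_U$ on $\mathcal{V}_r$, which is the assertion.
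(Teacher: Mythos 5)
The paper never proves this proposition: it is quoted from \cite{HRS} as the computational formulation used there, and no argument for it appears in Section 4 either, so there is no in-paper proof to compare against. Judged on its own, your argument is sound and follows the natural route: writing $P=AP_1B$ with $LA=0$, $BR=0$, identifying $T_P\mathcal{V}_r=\{X: LXR=0\}$ at a point with invertible leading block, hence $T_P^{\perp}=\{(R\Lambda L)^T\}$ by the adjoint computation and a dimension count, and then feeding this into Proposition \ref{prop:critical}. The observation that summing the $mn$ equations kills $\langle P,(R\Lambda L)^{T}\rangle$ (since $LP=0$) and forces $p_{++}=1$ is exactly the right way to reconcile the affine system with the projective criticality condition, and the recovery of $(L_1,R_1,\Lambda)$ from a critical $P$ with invertible leading block gives the asserted bijection on that chart. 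Your dimension count on $\sL_{\mathcal{V}_r}$, showing that for generic $U$ no critical point has singular leading block, is also correct.

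Two points should be made explicit to finish. First, genericity of $U$ gives all $u_{ij}\neq 0$, and the $(i,j)$ equation reads $p_{ij}\bigl((R\Lambda L)^T_{ij}+u_{++}\bigr)=u_{ij}$, so every solution automatically has all $p_{ij}\neq 0$; this is what puts $P$ outside $\mathcal{H}$ and legitimizes the entrywise division in the converse direction. Second, the step you flag as the obstacle — no spurious solutions with $\det P_1=0$, and finiteness — needs no transversality argument: for fixed $(P_1,L_1,R_1,\Lambda)$ the system is affine-linear in $U$ with coefficient matrix $I-p\mathbf{1}^T$ in vectorized form, whose determinant is $1-p_{++}$. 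Hence the incidence variety of (parameters, data) has fibers over parameter space that are a single point when $p_{++}\neq 1$ and at most a line (inside the locus $p_{++}=1$) otherwise; restricting to $\{\det P_1=0\}$ therefore gives a set of dimension at most $mn-1$, whose projection to data space is not dense, while the total incidence variety has dimension at most $mn$, so all fibers over generic $U$ are finite. Combined with Theorem \ref{thm:finite-to-one}, which makes the number of chart solutions equal to the (finite) number of critical points, this closes the proof along exactly the lines you propose.
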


We next present the analogue to
Theorem \ref{thm:MLvalues} for symmetric matrices
$$
P \quad = \quad \begin{pmatrix}
2p_{11} & p_{12} & p_{13} & \cdots & p_{1n} \\
p_{12} & 2 p_{22} & p_{23} & \cdots & p_{2n} \\
p_{13} & p_{23} & 2 p_{33} & \cdots & p_{3n} \\
 \vdots & \vdots & \vdots & \ddots & \vdots \\
p_{1n} & p_{2n} & p_{3n} & \cdots & 2 p_{nn} \\
\end{pmatrix}.
$$
Such matrices, with nonnegative coordinates $p_{ij}$ that sum to $1$,
represent joint probability distributions for two identically distributed
random variables with $n$ states.
The case $n= 2$ and $r=1$ is the Hardy-Weinberg curve, which we
discussed in detail in Example \ref{ex:hardyweinberg}.

\begin{theorem}
\label{thm:MLSymmValues}
The known values for  ML degrees of symmetric matrices of rank at most $r$
(mixtures of $r$ independent identically distributed random variables) are
$$
\begin{matrix}
        & n\,\, = &\, 2 \,&\, 3 \,& 4 & 5 & 6 \\
r=1 & &         1 & 1 & 1 & 1 & 1  \\
r=2 & &         1 & 6 \,& {\bf 37} & {\bf 270} & {\bf 2341}\\
r=3 & &        & 1 & {\bf 37} & {\bf 1394}& ?\\
r=4 & &         &   &   1      & {\bf 270} & ?\\
r=5 & &         &   &          &   1       & {\bf 2341}\\
\end{matrix}
$$
\end{theorem}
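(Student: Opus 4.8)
The plan is to reduce each entry of the table to the count of isolated complex solutions of an explicit square polynomial system, and then to perform that count by numerical homotopy continuation, following the same strategy used for the rectangular case in the proposition preceding this theorem.

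First I would record the critical (likelihood) equations for the symmetric determinantal variety. Let $\mathcal{V}_r^{\mathrm{sym}} \subset \PP^{\binom{n+1}{2}-1}$ be the projective variety of symmetric $n \times n$ matrices of rank at most $r$; its regular locus consists of the matrices of rank exactly $r$, and at such a point $P$ the tangent space inside $\mathrm{Sym}_n(\C)$ has dimension $rn - \binom{r}{2}$, so its orthogonal complement $T_P^\perp$ with respect to the trace form has dimension $\binom{n-r+1}{2} = \mathrm{codim}\,\mathcal{V}_r^{\mathrm{sym}}$. Exactly as in Proposition~\ref{prop:critical}, a rank-$r$ symmetric matrix $P$ is a critical point of $\log(\ell_U)$ on $\mathcal{V}_r^{\mathrm{sym}}$ if and only if $T_P^\perp$ contains the symmetric gradient matrix $\big[\, u_{ij}/p_{ij} - u_{++}/p_{++} \,\big]_{1 \le i \le j \le n}$.

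Next I would turn this into a square system suitable for path-tracking by specializing the block parametrization of the rectangular case: take $L_1 = R_1^{\!T}$, let $P_1$ be a symmetric $r \times r$ matrix of unknowns and $R_1$ an $r \times (n-r)$ matrix of unknowns, set $P = \begin{pmatrix} P_1 & P_1 R_1 \\ R_1^{\!T} P_1 & R_1^{\!T} P_1 R_1 \end{pmatrix}$, and let $\Lambda$ be a symmetric $(n-r) \times (n-r)$ matrix of Lagrange unknowns. With $L = \begin{pmatrix} R_1^{\!T} & -I_{n-r} \end{pmatrix}$ and $R = \begin{pmatrix} R_1 \\ -I_{n-r} \end{pmatrix}$, the equations $P \star (R\,\Lambda\,L)^T + u_{++}\cdot P = U$ form a system of $\binom{n+1}{2}$ equations in $\binom{n+1}{2}$ unknowns whose solutions, after discarding those with $\mathrm{rank}(P) < r$ or $P \in \mathcal{H}$, are exactly the critical points of $\ell_U$ on $\mathcal{V}_r^{\mathrm{sym}}$; for generic positive data $U$ their number is the ML degree. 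The extreme cases $r = 1$ and $r = n$ give ML degree $1$ by the rational-function argument already seen for the Hardy--Weinberg curve in Example~\ref{ex:hardyweinberg}, and the symmetric analogue of the Draisma--Rodriguez duality \cite{DR} pairs rank $r$ with rank $n{+}1{-}r$, which both explains the symmetry of the table and halves the number of genuinely new computations.

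Finally, for each pair $(n,r)$ carrying a numerical entry I would solve the system above with {\tt Bertini} at random positive $U$, filter out boundary and lower-rank solutions, and certify the resulting count by monodromy loops together with a trace test; for $n \le 3$ one can corroborate the answer symbolically by computing a Gr\"obner basis of the critical ideal over $\mathbb{Q}$ and saturating by $\mathcal{H} \cup \mathcal{V}_{r-1}^{\mathrm{sym}}$. The main obstacle is purely one of scale: the number of unknowns is $\binom{n+1}{2}$ and the ML degrees themselves grow rapidly, so that \emph{guaranteeing} that path-tracking has found all isolated solutions becomes delicate, and for the two largest cases $(n,r) = (6,3)$ and $(6,4)$ it was not carried through --- hence the entries marked ``?''.
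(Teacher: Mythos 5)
Your proposal matches the paper's own treatment: the table is a report of computations, with the entries obtained from the critical equations of Proposition \ref{prop:critical} via the block parametrization of the rank constraint (specialized to the symmetric case, as in \cite{HRS}), solved numerically in {\tt Bertini} for the bold entries and symbolically for the small ones, with the extreme ranks giving ML degree $1$ and the Draisma--Rodriguez duality \cite{DR} accounting for the symmetry of the columns and the unresolved $(6,3)$, $(6,4)$ entries left as ``?''. So you take essentially the same route as the paper, including the correct square-system count $\binom{r+1}{2}+r(n-r)+\binom{n-r+1}{2}=\binom{n+1}{2}$.
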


At present we do not know the common value of the ML degree for
$n=6$ and $r=3,4$.
In what follows we take a closer look at the model
for symmetric $3 \times 3$-matrices of rank $2$.

\begin{example} \label{ex:galois33} \rm
Let $n=3 $ and $r=2$, so $X$ is a cubic hypersurface in $\PP^5$. The
likelihood correspondence $\mathcal{L}_X $
is a five-dimensional subvariety of $\PP^5 \times \PP^5$ having
bidegree 
$$ B_X(p,u) \,\, = \,\, 6 p^5+12 p^4 u+15 p^3 u^2+12 p^2 u^3+3 p u^4. $$
The bihomogeneous prime ideal of $\mathcal{L}_X$ is minimally generated by $23$ polynomials, namely:
\begin{itemize}
\item One polynomial of bidegree $(3,0)$; this is the determinant of $P$.
\item Three polynomials of degree $(1,1)$. These come from the underlying
   toric model \\ $\{{\rm rank}(P)=1 \}$. As suggested in Proposition   \ref{prop:toricLC},
   they are the
         $2 \times  2$-minors of
$$     \begin{pmatrix}
     2 p_0+p_1+p_2  & p_1+2 p_3+p_4  & p_2+p_4+2 p_5 \\
     2 u_0 + u_1 + u_2  & u_1 + 2 u_3+u_4  & u_2+u_4+2 u_5
\end{pmatrix}.    $$
\item One polynomial of degree $(2,1)$,
\item three polynomial of degree $(2,2)$,
\item nine polynomials of degree $(3,1)$,
\item six  polynomials of degree $(3,2)$.
\end{itemize}
It turns out that this ideal represents an expression 
for the MLE $\hat P$ in terms of radicals in~$U$.

We shall work this out for one
numerical example. Consider
the data matrix $U$ with
$$
u_{11} = 10, \,
u_{12} = 9, \,
u_{13}  = 1, \,
u_{22}  = 21,\,
u_{23} = 3,\,
u_{33}  = 7. $$       
For this choice, all six critical points of the likelihood function                                   
are real and positive:
$$
\begin{matrix} \smallskip
p_{11} & p_{12} & p_{13} & p_{22} & p_{23} & p_{33} && \,\,\,{\rm log} \,\ell_U(p) \\
0.1037 &  0.3623 & 0.0186 & 0.3179 & 0.0607 & 0.1368 &&    -82.18102 \\
0.1084 &  0.2092 & 0.1623 & 0.3997 & 0.0503 & 0.0702 &&   -84.94446 \\
\medskip
0.0945 &  0.2554 & 0.1438 & 0.3781 & 0.4712 & 0.0810  &&   -84.99184 \\
0.1794 &  0.2152 & 0.0142 &  0.3052 & 0.2333 &  0.0528 && -85.14678 \\
0.1565 &  0.2627 & 0.0125 & 0.2887 & 0.2186 & 0.0609 &&    -85.19415 \\
0.1636 &  0.1517 & 0.1093 & 0.3629 & 0.1811 & 0.0312 &&   -87.95759 \\
\end{matrix}
$$
The first three points are local maxima in $\Delta_5$ and the last
three points are local minima. These six points define an algebraic field extension
of degree $6$ over $\mathbb{Q}$. One might expect that the
Galois group of these six points over $\mathbb{Q}$ is the 
full symmetric group $S_6$. If this were the case then the above coordinates
could not be written in radicals.
However, that expectation is wrong.
The Galois group of the likelihood fibration ${\rm pr}_2:\mathcal{L}_X \rightarrow \PP^{5}_U$
given by the $3 \times 3$ symmetric problem is
a subgroup of $S_6$ isomorphic to the solvable group $S_4$.

To be concrete, for the data above,  the minimal polynomial
for the MLE  $\hat p_{33}$ equals
$$ \begin{matrix}
9528773052286944p_{33}^{6} - 4125267629399052p_{33}^{5} 
+ 713452955656677p_{33}^{4} \qquad \qquad \\ - 63349419858182p_{33}^{3}  
+ 3049564842009p_{33}^{2} - 75369770028p_{33} + 744139872 \, = \, 0.
\end{matrix}
$$
We solve this equation in radicals as follows:
 $$
 \begin{matrix}
 \medskip
  p_{33} & =&
\frac{16427}{227664} + \frac{1}{12}\!\left(\zeta - \zeta^{2}\right)\omega_2 
 - \frac{66004846384302}{19221271018849}\omega_2^2 +  \\ & & 
 \left(
  \frac{14779904193}{211433981207339} \zeta^{2}
- \frac{14779904193}{211433981207339}\zeta   \right)\omega_1\omega_2^2 + \frac{1}{2}\omega_3,
\end{matrix}
$$
 where $\zeta$ is a primitive third root of unity,
$\,\omega_1^2  =  94834811/3$, and
$$
\begin{matrix} \medskip
\omega_2^3 & \!\! =&  \left(\frac{5992589425361}{150972770845322208}\zeta - \frac{5992589425361}{150972770845322208}\zeta^{2}\right) + \frac{97163}{40083040181952}\omega_1,
\qquad \\ \smallskip
\omega_3^2 & \!\! = &\!\! \!  \frac{5006721709}{1248260766912} + \left(\frac{212309132509}{4242035935404}\zeta - \frac{212309132509}{4242035935404}\zeta^{2}\right)\! \omega_2 - \frac{2409}{20272573168}\omega_1\omega_2
\\ & &  - \frac{158808750548335}{76885084075396}\omega_2^2
+ \left(  \frac{17063004159}{422867962414678}\zeta^{2} - \frac{17063004159}{422867962414678}\zeta \right)\omega_1\omega_2^2 .
\end{matrix}
$$ 
The explanation for the extra symmetry 
stems from the duality theorem below. It furnishes an involution on the
set of six critical points that allows us to express them in radicals.
\hfill $\diamondsuit$
\end{example}

The tables in Theorems \ref{thm:MLvalues} and \ref{thm:MLSymmValues}
suggest that the columns will always be symmetric.
This fact was conjectured in \cite{HRS} and subsequently proved 
by Draisma and Rodriguez in \cite{DR}.

\begin{theorem}
\label{conj:duality}
Fix $m \leq n$ and consider the determinantal varieties $\mathcal{V}_i$ for either general or symmetric matrices.
Then the ML degrees for rank $r$ and for rank $m{-}r{+}1$~coincide.
\end{theorem}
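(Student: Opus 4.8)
The plan is to establish a bijection between the critical points of $\ell_U$ on $\mathcal{V}_r$ and those on $\mathcal{V}_{m-r+1}$, realized through a concrete algebraic duality on matrix space. First I would record the geometric characterization from Proposition \ref{prop:critical}: a rank-$r$ matrix $P$ is critical for $\log(\ell_U)$ precisely when the ``score matrix'' $\bigl[\frac{u_{ij}}{p_{ij}} - \frac{u_{++}}{p_{++}}\bigr]$ lies in $T_P^{\perp}$, the orthogonal complement of the tangent space to $\mathcal{V}_r$ at $P$. The key algebraic observation is that the entrywise reciprocal operation interacts with the secant/determinantal structure in a controlled way. Concretely, I would introduce the map that sends a critical pair $(P,U)$ to a new pair built from the Hadamard inverse: writing $Q$ for the matrix with entries proportional to $u_{ij}/p_{ij}$ (suitably normalized so that its entries sum to $1$), one checks that $Q$ has rank $m-r+1$ and is itself a critical point of the likelihood function $\ell_U$ on $\mathcal{V}_{m-r+1}$ for the \emph{same} data $U$.

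The main steps, in order, are as follows. Step one: verify the rank statement, i.e. that if $\mathrm{rank}(P) = r$ and $P$ is critical, then the Hadamard-reciprocal-type matrix $Q$ associated to $(P,U)$ has rank exactly $m-r+1$. This is where the condition ``critical point'' is essential — it is false for arbitrary $P$ — and it is the crux of the argument. I expect this to follow from a rank-additivity identity: the tangent space condition forces $P$ and $Q$ to be built from complementary data (roughly, $Q$ lies in a space dual to $T_P$), and the relevant linear-algebra lemma says that a rank-$r$ locus and the ``orthogonally dual'' rank locus have ranks summing to $m+1$ under this correspondence. Step two: check that $Q$, so constructed, satisfies the critical-equation characterization of Proposition \ref{prop:critical} with $\mathcal{V}_r$ replaced by $\mathcal{V}_{m-r+1}$; this should be a symmetric/formal consequence of the construction, since the roles of $P$ and $Q$ are interchangeable. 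Step three: observe that applying the construction twice returns (a scalar multiple of) the original $P$, so the map is an involution and hence a bijection; since it is defined over the field generated by the data, it also carries the Galois-theoretic content alluded to in Example \ref{ex:galois33}. Step four: run the identical argument in the symmetric category, where $P$ and $Q$ are both symmetric and $m = n$, so rank $r$ pairs with rank $n-r+1$.

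The hard part will be Step one — proving the rank of $Q$ is exactly $m-r+1$ rather than merely bounded by it. The inequality in one direction might come from a determinantal/minors count, but pinning down equality requires exploiting the genericity of $U$ together with the criticality of $P$ in an essential way; a naive dimension count is not enough because the determinantal varieties are singular along their smaller strata, so one must be careful that $Q$ does not accidentally drop into $\mathcal{V}_{m-r}$. I would handle this by working with an explicit parametrization of a neighborhood of a smooth point of $\mathcal{V}_r$ (for instance the block parametrization $P = \begin{pmatrix} P_1 & P_1 R_1 \\ L_1 P_1 & L_1 P_1 R_1\end{pmatrix}$ used just before Theorem \ref{conj:duality}), computing $T_P^{\perp}$ in those coordinates, and showing that the matrix $Q$ forced into $T_P^{\perp}$ by the critical equations automatically has the complementary block rank. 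A secondary subtlety is bookkeeping the projective normalizations: the likelihood function lives on $\PP^{mn-1}$, so one must track the scalar factors coming from $p_{++}$ and $u_{++}$ so that the involution is well-defined on the appropriate open subsets of the likelihood correspondences $\mathcal{L}_{\mathcal{V}_r}$ and $\mathcal{L}_{\mathcal{V}_{m-r+1}}$, and then invoke Theorem \ref{PositiveData} (or the finiteness in Theorem \ref{thm:finite-to-one}) to conclude that the bijection on generic fibers equates the two ML degrees.
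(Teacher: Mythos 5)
Your overall strategy --- produce an explicit involution on the critical points of $\ell_U$ and conclude that the generic fibers of the two likelihood fibrations have the same cardinality --- has exactly the right shape, and it is in fact how the result is proved; but note that the paper itself does not reprove this theorem: it is the Draisma--Rodriguez theorem, cited from \cite{DR}, and the paper records the precise form of the duality in Theorem \ref{thm:duality2}, namely a bijection $P \leftrightarrow Q$ between critical points on $\mathcal{V}_r$ and on $\mathcal{V}_{m-r+1}$ with $P \star Q = \Omega_U$, where $\Omega_U$ has entries $u_{ij}u_{i+}u_{+j}/(u_{++})^3$. The concrete gap in your proposal is that your candidate dual matrix is not the right one. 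You take $Q$ to have entries proportional to $u_{ij}/p_{ij}$ with a single global normalization; the correct dual has entries $q_{ij} = u_{ij}u_{i+}u_{+j}/\bigl((u_{++})^3 p_{ij}\bigr)$, which differs from yours by the row and column scalings $u_{i+}$ and $u_{+j}$. Such diagonal scalings preserve rank (so your Step one is unaffected by the discrepancy) but do not preserve criticality of $\ell_U$, so your Step two fails as stated. The failure is already visible in the extreme case $r=1$: a critical point on $\mathcal{V}_1$ is $p_{ij}=u_{i+}u_{+j}/(u_{++})^2$, and the unique critical point of $\ell_U$ on $\mathcal{V}_m=\PP^{mn-1}$ is the data point $U/u_{++}$; the correct formula indeed returns $q_{ij}=u_{ij}/u_{++}$, whereas your recipe produces entries proportional to $u_{ij}/(u_{i+}u_{+j})$, which is not proportional to $u_{ij}$ for generic $U$ and hence is not critical.

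Beyond the formula, the substantive content of the theorem --- your Step one (the dual matrix has rank exactly $m-r+1$, not less) and Step two (it satisfies the criterion of Proposition \ref{prop:critical} for $\mathcal{V}_{m-r+1}$) --- is asserted rather than proved: the ``rank-additivity identity'' and the computation of $T_P^{\perp}$ in the block parametrization are precisely where the real work of \cite{DR} lies, and you correctly flag this as the hard part without supplying it; nothing in the sketch rules out the dual point degenerating into $\mathcal{V}_{m-r}$. To repair the outline, replace your $Q$ by the matrix with entries $u_{ij}u_{i+}u_{+j}/((u_{++})^3p_{ij})$; the relation $P \star Q = \Omega_U$ is symmetric in $P$ and $Q$, so your involution argument in Step three survives, and the bijection on generic fibers (using finiteness from Theorem \ref{thm:finite-to-one}) then gives equality of the two ML degrees, in both the general and the symmetric settings. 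What remains --- and it is the main point --- is exactly the rank-and-criticality statement of Theorem \ref{thm:duality2}, for which the paper defers to \cite{DR} rather than giving an argument.
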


In fact, the main result in \cite{DR} establishes the following more precise statement.
Given a data matrix $U$ of format $m \times n$,  we write 
$\,\Omega_U$ for the 
$m \times n$-matrix
whose $(i,j)$ entry equals
$$ \frac{u_{ij} \cdot u_{i+} \cdot u_{+j}}{(u_{++})^3}. $$

\begin{theorem} \label{thm:duality2}
Fix $m \leq n$ and $U$ an $m \times n$-matrix with strictly positive integer
entries. There exists a bijection
 between the complex critical points $P_1,P_2,\ldots,P_s$
 of the likelihood function $\,\ell_U$ on $\mathcal V_r$ and the complex
critical points $Q_1,Q_2,\ldots, Q_s$ of $\,\ell_U$ on $\mathcal V_{m-r+1}$
 such that
 $$
 P_1 \star Q_1 \, = \,  P_2 \star Q_2  \, = \, \, \cdots \,\,=\,
 P_s \star Q_s \,\, = \,\,  \Omega_U.
$$
 Thus, this bijection preserves reality, positivity, and
rationality.
 \end{theorem}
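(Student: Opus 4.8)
The plan is to produce the bijection by an explicit formula and then check it works. Given a critical point $P$ of $\ell_U$ on $\mathcal{V}_r$ — so all $p_{ij}\neq 0$ and $\operatorname{rank}P=r$ — define the matrix $Q:=\Omega_U\oslash P$ with entries $q_{ij}=\omega_{ij}/p_{ij}$, so that $P\star Q=\Omega_U$ by construction. I would prove that $Q$ is a critical point of $\ell_U$ on $\mathcal{V}_{m-r+1}$ of rank exactly $m-r+1$. Granting this for every admissible rank, the map $P\mapsto Q$ is injective (on its image the inverse is $Q\mapsto\Omega_U\oslash Q$) and, applying the statement with $r$ replaced by $m-r+1$, surjective onto the rank-$(m-r+1)$ critical points; hence it is the asserted bijection, and in particular the two ML degrees agree, which is Theorem~\ref{conj:duality}. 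Since $\Omega_U$ has strictly positive rational entries, $P\mapsto Q$ visibly preserves reality, positivity, and rationality.

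The first step is to rewrite Proposition~\ref{prop:critical} symmetrically. At a rank-$r$ point $P$ the normal space $T_P^{\perp}\subset\mathbb{C}^{m\times n}$ consists of the matrices whose columns lie in the left kernel of $P$ and whose rows lie in the right kernel of $P$; equivalently, writing $M:=[u_{ij}/p_{ij}]_{ij}-(u_{++}/p_{++})\,\mathbf{1}$ and $\widetilde U:=[u_{ij}/p_{ij}]_{ij}$, the matrix $P$ is critical precisely when $M^{\top}P=0$ and $PM^{\top}=0$. Reading off the diagonal entries of these identities yields the margin relations $p_{i+}/p_{++}=u_{i+}/u_{++}$ and $p_{+j}/p_{++}=u_{+j}/u_{++}$, and, after normalising $p_{++}=1$ (so $\widetilde U=M+u_{++}\mathbf{1}$), they are equivalent to the statement that $P^{\top}\widetilde U$ and $\widetilde U P^{\top}$ are rank-one matrices proportional to $P^{\top}\mathbf{1}$ and $\mathbf{1}P^{\top}$ respectively. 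These ``transpose identities'' are the algebraic mechanism behind the duality.

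With $p_{++}=1$, a direct computation gives $q_{ij}=\tfrac{u_{i+}u_{+j}}{u_{++}^{3}}\,\widetilde U_{ij}$, i.e.\ $Q=\tfrac{1}{u_{++}^{3}}D_r\,\widetilde U\,D_c$ with $D_r=\operatorname{diag}(u_{i+})$ and $D_c=\operatorname{diag}(u_{+j})$. Since $M^{\top}P=0$ forces the columns of $M$ into the left kernel of $P$, of dimension $m-r$, and $\widetilde U=M+u_{++}\mathbf{1}$ is a rank-one perturbation of $M$, subadditivity of rank gives $\operatorname{rank}Q=\operatorname{rank}\widetilde U\le m-r+1$, so $Q\in\mathcal{V}_{m-r+1}$. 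For the exact rank one observes that the all-ones vector lies neither in the left kernel of $P$ (its image under $P^{\top}$ is the strictly positive vector of column sums) nor, symmetrically, in the right kernel; hence the rank-one update is effective and $\operatorname{rank}Q=\operatorname{rank}M+1$, which equals $m-r+1$ provided $\operatorname{rank}M=m-r$ at the critical point — true for generic, and with more care for all strictly positive, data $U$. Thus $Q$ is a smooth point of $\mathcal{V}_{m-r+1}$.

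It remains to verify that $Q$ is critical, i.e.\ that $N:=[u_{ij}/q_{ij}]_{ij}-(u_{++}/q_{++})\,\mathbf{1}$ satisfies $N^{\top}Q=0$ and $QN^{\top}=0$. Here $u_{ij}/q_{ij}=\tfrac{u_{++}^{3}}{u_{i+}u_{+j}}p_{ij}$, so $[u_{ij}/q_{ij}]_{ij}=u_{++}^{3}D_r^{-1}P\,D_c^{-1}$; substituting this together with $Q=\tfrac{1}{u_{++}^{3}}D_r\widetilde U D_c$ into $N^{\top}Q$ produces a difference of two rank-one matrices, and the transpose identities for $P$ (together with $P^{\top}M=0$ and the margin relations) collapse both summands to the \emph{same} multiple of $\mathbf{1}\,(u_{+1},\dots,u_{+n})$; tracking the scalars shows in particular that $q_{++}=1$, so the difference vanishes, and $QN^{\top}=0$ follows by the transposed computation. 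This last step — disentangling the criticality of $P$ far enough to see the cancellation in $N^{\top}Q$, including the normalisation $q_{++}=1$ — is the main obstacle, and it is there, together with the exact-rank claim, that positivity of $U$ is genuinely used (to keep $Q$ off the singular locus $\mathcal{V}_{m-r}$ and to guarantee all Hadamard divisions make sense). For the symmetric determinantal varieties the argument is verbatim the same, using that $\Omega_U$ is symmetric when $U$ is and that the relevant tangent and normal spaces consist of symmetric matrices.
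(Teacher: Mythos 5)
The paper itself contains no proof of this statement: it is quoted from Draisma and Rodriguez \cite{DR}, so your proposal has to be judged on its own terms rather than against an argument in the text. Its formal core is sound and is indeed the natural route. Writing $\widetilde U=[u_{ij}/p_{ij}]$ and $M=\widetilde U-(u_{++}/p_{++})\mathbf{1}$, Proposition \ref{prop:critical} does amount to saying that a rank-$r$ point $P$ is critical iff $P^{\top}M=0$ and $MP^{\top}=0$; the diagonal entries give the margin relations $p_{i+}/p_{++}=u_{i+}/u_{++}$ and $p_{+j}/p_{++}=u_{+j}/u_{++}$; with $p_{++}=1$ one has $Q=u_{++}^{-3}D_r\widetilde U D_c$, the identity $\sum_{i,j}u_{i+}\widetilde U_{ij}u_{+j}=u_{++}^{3}$ (a consequence of the margins and $P^{\top}\widetilde U=u_{++}P^{\top}\mathbf 1$) gives $q_{++}=1$, and substituting $[u_{ij}/q_{ij}]=u_{++}^{3}D_r^{-1}PD_c^{-1}$ into $NQ^{\top}$ and $Q^{\top}N$ does collapse each to a difference of two equal rank-one matrices, so $Q$ satisfies the critical equations at whatever rank it has. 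Your rank-one-update argument is also correct, provided you justify it by the margin relations rather than by ``positivity'' of the (possibly complex) $P$: one needs $\mathbf 1\notin\operatorname{col}(M)\subseteq\ker P^{\top}$ and $\mathbf 1\notin\operatorname{row}(M)\subseteq\ker P$, which hold because $P^{\top}\mathbf 1$ and $P\mathbf 1$ are the margin vectors $(u_{+j}/u_{++})$ and $(u_{i+}/u_{++})$. This yields $\operatorname{rank}Q=\operatorname{rank}M+1$.

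The genuine gap is the step you postpone: that $\operatorname{rank}M=m-r$, i.e.\ $\operatorname{rank}Q=m-r+1$ exactly, at every critical point. This is not a refinement to be supplied ``with more care for all strictly positive data''; it is the crux, and in that generality it is false. Take $m=n=3$, $r=2$, and any strictly positive integer matrix $U$ of rank $2$. Then $P=U/u_{++}$ is a regular (rank-$2$) point of $\mathcal{V}_2$ and is critical, since the vector $[u_{ij}/p_{ij}-u_{++}/p_{++}]$ is identically zero there; yet $\Omega_U\oslash P=[u_{i+}u_{+j}/u_{++}^{2}]$ has rank one, so it lies in the singular locus of $\mathcal{V}_{m-r+1}=\mathcal{V}_2$ and is not a critical point in the sense of Definition \ref{def:MLD_and_LC}. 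Since $P\star Q=\Omega_U$ determines the partner entrywise, no bijection with the stated property exists for such $U$; the theorem must be read with the genericity of the data that underlies the enumeration of ML-degree-many critical points. Even for generic $U$, the exact-rank claim needs an actual argument — for instance, that the locus in the likelihood correspondence of $\mathcal{V}_r$ where $\operatorname{rank}(\Omega_U\oslash P)\le m-r$ is a proper closed subset whose image misses generic data, which in turn requires producing at least one good pair or a degeneration — and this is exactly where the substance of \cite{DR} lies; your outline does not supply it. A smaller point: the symmetric case is not verbatim, since in the paper's coordinates the symmetric matrix carries $2p_{ii}$ on the diagonal and the normal space must be taken inside symmetric matrices, so $\Omega_U$ and the cancellation computation need separate bookkeeping there.
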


The key to computing the ML degree tables and to formulating
the duality conjectures in \cite{HRS}, was the use of
numerical algebraic geometry. The software {\tt Bertini}
allowed for the computation of  thousands of instances
in which the formula of Theorem \ref{thm:duality2} was confirmed.

 {\tt Bertini}  is  numerical software, based on homotopy continuation, for finding all complex
solutions to a system of polynomial equations (and much more).
The software is available at \cite{Bertini}.
The developers,
Daniel  Bates, Jonathan Hauenstein, Andrew  Sommese, Charles Wampler, 
have just completed a new textbook \cite{BHSW} on the
mathematics behind {\tt Bertini}.

For the past two decades, algebraic geometers have increasingly
employed computational methods as a tool for their research. However, these
computations have almost always been symbolic (and hence exact). They
relied on Gr\"obner-based
software such as {\tt Singular} or {\tt Macaulay2}. Algebraists often feel
a certain discomfort when asked to trust a numerical computation.
We encourage discussion about this issue, by raising the following question.

\begin{example} \rm
In the rightmost column of  Theorem \ref{thm:MLSymmValues}, it is
asserted that the solution to a certain enumerative geometry problem is {\bf 2341}.
Which of these would {\bf you} trust most:
\begin{itemize}
\item the output of a symbolic computation?
\item the output of a numerical computation?
\item a proof written by an algebraic geometer?
\end{itemize}
In the authors' view, it always pays off to be critical
and double-check all computations, regardless
of how they were carried out. And, this applies to all three of the above. 
\hfill $\diamondsuit$
\end{example}

One of the big advantages of numerical algebraic geometry
over Gr\"obner bases when it comes to MLE is
the separation between {\em Preprocessing} and {\em Solving}.
For any particular variety $X \subset \PP^n$, such
as $X = \mathcal{V}_r$, we preprocess by solving the
likelihood equations once, for a generic data set $U_0$
chosen by us. The coordinates of $U_0$ may be
complex (rather than real) numbers. We can chose them with stable
numerics in mind, so as to compute all critical points
up to high accuracy. This step can take a long time,
but the output is highly reliable.

After solving the equations once, for that
generic $U_0$, all subsequent computations
for any other data set $U$ are very fast. In particular, the
computation is fully parallelizable. If we have $m$ processors
at our disposal, where $m = {\rm MLdegree}\,(X)$,
then each processor can track one of the 
paths. To be precise,  homotopy continuation starts from the critical points 
of $\ell_{U_0}$ and transform them into the
critical points of $\ell_{U}$.
Geometrically speaking, for fixed $X$,
 the homotopy amounts to 
 walking on the sheets of the likelihood fibration
 ${\rm pr}_2:\mathcal{L}_X \rightarrow \mathbb{P}^n_u$.
 
To illustrate this point, here are the 
timings (in seconds) that were reported in \cite{HRS}
for the determinantal varieties $X = \mathcal{V}_r$.
Those computations were carried out in {\tt Bertini} on a 64-bit Linux cluster with $160$ processors.
The first row is the preprocessing time for solving the equations once.
The second row is the time needed to solve
any subsequent instance:
\begin{table}[h]
  \centering
  \begin{tabular}{ccccccc}
  $(m,n,r)$ & $\!\!(4,4,2)\!\!$ & $\!\!(4,4,3)\!\!$ & $\!\!(4,5,2)\!\!$ & $ \!\! (4,5,3) \!\! $ & $\!\!(5,5,2)\!\!$ & $\!\!(5,5,4)$ \\
  \hline
 \!\!\! Preprocessing \!\! & 257 & 427 & 1938 & 2902 & 348555 &   \!\!146952   \\
  Solving & 4 & 4 & 20 & 20 & 83 & 83  \\
\end{tabular}
\end{table}

\noindent
This table suggests that combining  numerical algebraic geometry
with existing tools from computational statistics might lead to a 
viable tool for certifiably solving MLE problems.

\smallskip

We are now at  the point where it is essential to offer a disclaimer.
The low rank model $\mathcal{M}_r$ does not correctly represent the
notion of conditional
independence. The model we should have used instead is the
 {\em mixture model} ${\rm Mix}_r$. By definition, ${\rm Mix}_r$ is
  the set of probability distributions
$P$ in $\Delta_{mn-1}$ 
that are convex combinations of $r$ independent distributions, 
each taken from $\mathcal{M}_1$.
Equivalently, the mixture model ${\rm Mix}_r$ consists of all~matrices
\begin{equation}
\label{eq:mixformula}
 P \,\,=\,\, A \cdot \Lambda \cdot B,
\end{equation}
where $A$ is a nonnegative $m {\times} r$-matrix whose
rows sum to $1$, 
$\Lambda$ is a nonnegative $r {\times} r$ diagonal matrix
whose entries sum to $1$, 
and $B$ is a nonnegative $r {\times} n$-matrix whose
columns sum to $1$. The formula (\ref{eq:mixformula}) expresses
${\rm Mix}_r$ as the image of a trilinear map between polytopes:
$$ \phi \,: \,(\Delta_{m-1})^r \times \Delta_{r-1} \times (\Delta_{n-1})^r \,\rightarrow \,
\Delta_{mn-1}\,, \quad
 (A, \Lambda, B)\,\,\, \mapsto\,\,\, P. $$
The following result is well-known; see e.g.~\cite[Example 4.1.2]{LiAS}.

\begin{proposition} \label{prop:mix}
Our low rank model $\mathcal{M}_r$ is the Zariski closure 
of the mixture model $\,{\rm Mix}_r\,$ in the probability simplex $\Delta_{mn-1}$. 
If $\,r \leq 2\,$ then $\,{\rm Mix}_r = \mathcal{M}_r$.
If $\,r \geq 3\,$ then $\,{\rm Mix}_r \subsetneq \mathcal{M}_r$.
\end{proposition}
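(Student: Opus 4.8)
The plan is to establish the three assertions of Proposition~\ref{prop:mix} in order: first the Zariski-closure statement, then the equality ${\rm Mix}_r = \mathcal{M}_r$ for $r \leq 2$, and finally the strict inclusion ${\rm Mix}_r \subsetneq \mathcal{M}_r$ for $r \geq 3$. For the first claim, I would argue that ${\rm Mix}_r$ and $\mathcal{M}_r$ are both semialgebraic subsets of $\Delta_{mn-1}$ of the same dimension, and that ${\rm Mix}_r$ contains a Euclidean-open subset of $\mathcal{M}_r$. The key observation is that any rank-$r$ matrix $P$ with strictly positive entries admits a factorization $P = A \Lambda B$ of the required type with $A, \Lambda, B$ having positive entries: starting from \emph{any} rank factorization $P = CD$ with $C$ of size $m \times r$ and $D$ of size $r \times n$, one rescales the rows of $D$ (absorbing the scalars into a diagonal $\Lambda$) and the columns of $C$ so that the rows of $C$ sum to $1$ and the columns of $D$ sum to $1$; positivity of $P$ together with a genericity/perturbation argument lets one choose the factorization so that all entries of $A$, $\Lambda$, $B$ are themselves positive. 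Since the positive rank-$r$ locus is Zariski dense in $\mathcal{V}_r = \overline{\mathcal{M}_r}$, this shows $\overline{{\rm Mix}_r} = \mathcal{V}_r$, and intersecting with $\Delta_{mn-1}$ gives the first assertion.

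For the case $r = 1$ the equality ${\rm Mix}_1 = \mathcal{M}_1$ is immediate, since $\mathcal{M}_1$ consists exactly of the rank-one probability matrices, which are precisely the products (outer products) of a row-sum vector and a column-sum vector, i.e.\ the image of $\phi$ with $r=1$. For $r=2$, I would show that every rank-$\leq 2$ probability matrix $P$ lies in the image of $\phi$. Write $P$ as a convex-type combination using a rank factorization: the crucial point is that when $r = 2$ one can always arrange the two rank-one summands to be themselves (up to scaling) probability matrices with nonnegative entries. Concretely, if $P$ has rank $2$, its row space is a $2$-dimensional subspace of $\mathbb{R}^n$; the set of nonnegative vectors in that subspace (after normalizing row sums) is a segment, and writing $P$'s rows in barycentric coordinates on that segment exhibits $P = A \Lambda B$ with the stochasticity and nonnegativity constraints satisfied. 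The rank-$\leq 1$ case is absorbed as a boundary case. This is essentially the statement that nonnegative rank equals rank for matrices of rank at most $2$, a classical fact; I would cite \cite[Example 4.1.2]{LiAS} as indicated and include this short argument for completeness.

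For the strict inclusion when $r \geq 3$, I would exhibit, or invoke, a nonnegative matrix of rank exactly $3$ (or exactly $r$) whose \emph{nonnegative rank} strictly exceeds its rank; such a matrix lies in $\mathcal{M}_r$ but not in ${\rm Mix}_r$ once normalized. The standard example is a suitably chosen $m \times n$ matrix with $m, n \geq 4$ — for instance one built from the ``$4\times 4$'' construction in the literature on nonnegative rank — whose rank is $3$ but which cannot be written as a sum of three nonnegative rank-one matrices. One also checks that $\mathcal{M}_r$ and ${\rm Mix}_r$ have the same dimension (so the inclusion is of a lower-dimensional-boundary type only in a refined sense — in fact ${\rm Mix}_r$ is a full-dimensional semialgebraic subset, just not all of $\mathcal{M}_r$), which is why the Zariski closures still agree. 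I expect the main obstacle to be the careful bookkeeping in the positivity-of-factors argument for the first claim — ensuring a rank factorization can be rescaled \emph{and} chosen with all three factors strictly positive — together with locating an explicit small witness matrix for the strict inclusion; the $r \leq 2$ equality is then comparatively routine linear algebra.
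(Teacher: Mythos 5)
The paper itself does not prove Proposition \ref{prop:mix}; it simply cites \cite[Example 4.1.2]{LiAS}, so your proposal has to be judged on its own merits, and it contains one genuine gap. The ``key observation'' you use for the Zariski-closure statement --- that \emph{every} rank-$r$ matrix with strictly positive entries admits a factorization $P=A\Lambda B$ with nonnegative (even positive) factors of inner dimension $r$ --- is false for $r\ge 3$. Strict positivity of the entries does not force nonnegative rank to equal rank: there exist strictly positive rank-$3$ matrices of nonnegative rank $4$, and this is precisely why the algebraic boundary of ${\rm Mix}_3$ described in Example \ref{ex:44nonneg} has $288$ components besides the $16$ coordinate hyperplanes. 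If your observation were correct, every positive point of $\mathcal{M}_r$ would lie in ${\rm Mix}_r$, so $\mathcal{M}_r\setminus{\rm Mix}_r$ would be confined to matrices with zero entries, contradicting that example (and trivializing the interesting part of the proposition). The rescaling argument you sketch only normalizes a rank factorization $P=CD$; it cannot make the factors nonnegative, and no perturbation argument rescues this. Fortunately the conclusion you need is much weaker and easy to obtain: it suffices that ${\rm Mix}_r$ is full-dimensional in $\mathcal{V}_r$. For generic strictly positive $a_1,\dots,a_r\in\mathbb{R}^m$, $b_1,\dots,b_r\in\mathbb{R}^n$, the differential of $(a_i,b_i)\mapsto\sum_i a_ib_i^T$ has image $A\cdot\mathbb{R}^{r\times n}+\mathbb{R}^{m\times r}\cdot B$, which is exactly the tangent space of the rank-$\le r$ locus at the (rank-$r$) image point; hence the image contains a Euclidean-open subset of the rank-$r$ manifold, and since $\mathcal{V}_r$ is irreducible, ${\rm Mix}_r$ is Zariski dense in it. That repairs your first step.

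Your treatment of $r\le 2$ (the intersection of the $2$-dimensional row space with the nonnegative orthant is a pointed cone with at most two extreme rays, and expressing the rows in those rays gives the nonnegative factorization) is the standard argument and is correct. For $r\ge 3$, the slack matrix of the square, which the paper records immediately after the proposition, is a correct witness for $r=3$; but note that for $r\ge 4$ a matrix ``of rank exactly $3$ whose nonnegative rank exceeds its rank'' does not suffice --- you need its nonnegative rank to exceed $r$ itself (otherwise it lies in ${\rm Mix}_r$). So for general $r$ one must either produce rank-$\le r$ matrices of nonnegative rank greater than $r$ (for example slack matrices of polygons with many vertices, or a padded construction whose nonnegative rank you actually bound from below), and one should also note that the format must satisfy $r<\min(m,n)$, since for $r=\min(m,n)$ the nonnegative rank never exceeds $r$ and the strict inclusion fails. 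With the corrected density argument and this adjustment for $r\ge 4$, the proposal becomes a complete proof.
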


The point here is the distinction between the
rank and the nonnegative rank of a nonnegative matrix.
Matrices in $\mathcal{M}_r$ have rank $\leq r$ and matrices
in ${\rm Mix}_r$ have nonnegative rank $\leq r$.
Thus elements of $\mathcal{M}_r \backslash {\rm Mix}_r $
are matrices whose nonnegative rank exceeds its rank.

\begin{example} \rm
The following $4 \times 4$-matrix has rank $3$ but nonnegative rank $4$:
$$ P \quad = \quad \frac{1}{8} \cdot \begin{pmatrix} 
1 & 1 & 0 & 0 \\
0 & 1 & 1 & 0 \\
0 & 0 & 1 & 1 \\
1 & 0 & 0 & 1 
\end{pmatrix}
$$
This is the slack matrix of a regular square.
It is an element of $\mathcal{M}_3 \backslash {\rm Mix}_3$.
\hfill $\diamondsuit$
\end{example}

Engineers and scientists care more about ${\rm Mix}_r$ than $\mathcal{M}_r$.
In many applications,  nonnegative rank is more relevant than  rank.
The reason can be seen in (\ref{eq:genderdecomposition}).
In such a low-rank decomposition, we do not want the female table or  the male table
to have a negative entry.

This raises the following important questions:
How to maximize the likelihood function $\ell_U$ over ${\rm Mix}_r$? 
What are the algebraic degrees associated with that optimization problem?

\smallskip

Statisticians seek to maximize the likelihood function $\ell_U$ on ${\rm Mix}_r$ 
by using the {\em expectation-maximization} (EM) algorithm in  the space 
$(\Delta_{m-1})^r \times \Delta_{r-1} \times (\Delta_{n-1})^r$
of parameters $(A, \Lambda, B)$.
In each iteration, the EM algorithm strictly decreases the 
{\em Kullback-Leibler divergence} from the current model point
$P = \phi(A,\Lambda,B)$ to the empirical distribution
$\,\frac{1}{u_{++}}\cdot U$. The hope in running the
EM algorithm for given data $U$ is that it
converges to the global  maximum $\hat P$ on ${\rm Mix}_r$.
For a presentation of
the EM algorithm for discrete algebraic models
see \cite[\S 1.3]{PS}. A  study of
the geometry of this algorithm for the mixture model ${\rm Mix}_r$
is undertaken in \cite{KRS}.

If the EM algorithm converges to a point that 
lies in the interior of the parameter polytope, and
is non-singular with respect to $\phi$, then that point will be 
among the critical points on $\mathcal{M}_r$.
These are  characterized by Proposition  \ref{prop:critical}.
However, since  ${\rm Mix}_r$ is properly contained in $\mathcal{M}_r$,
it frequently happens that the true MLE $\,\hat P\,$ lies on the boundary of ${\rm Mix}_r$.
In that case, $\hat P$ is not a critical point of $\ell_U$ on $\mathcal{M}_r$,
meaning that $(\hat P,U)$ is not in  the likelihood  correspondence
on $\mathcal{V}_r$. Such points will never be found by the method described above.

In order to address this issue, we need to identify the divisors in
the variety
 $\mathcal{V}_r \subset \PP^{mn-1}$ that appear in the
algebraic boundary of $ {\rm Mix}_r$.
By this we mean the irreducible components 
$W_1, W_2, \ldots, W_s$ of the Zariski closure of
$\partial {\rm Mix}_r$.
Each of these $W_i$ has codimension $1$ in $\mathcal{V}_r$.
Once the $W_i$ are identified, one would need to
examine their ML degree, and also the ML degree
of the various strata $W_{i_1} \cap \cdots \cap W_{i_s}$ 
in which $\ell_U$ might attain its maximum.
At present we do not have this information
even in the smallest non-trivial case $m=n=4$ and $r=3$.

\begin{example} 
\label{ex:44nonneg}
\rm
We illustrate this issue by describing one of the
components $W$ of the algebraic boundary for the
mixture model ${\rm Mix}_3$ when $m=n=4$.
Consider the equation
$$
\begin{pmatrix}
p_{11} & p_{12} & p_{13}  & p_{14} \\
p_{21} & p_{22} & p_{23}  & p_{24} \\
p_{31} & p_{32} & p_{33}  & p_{34} \\
p_{41} & p_{42} & p_{43}  & p_{44}
\end{pmatrix}
 \,= \,
\begin{pmatrix}
0  & a_{12}  & a_{13} \\
0 & a_{22} & a_{23} \\
a_{31} & 0 & a_{33} \\
a_{41} & a_{42}  & 0
\end{pmatrix} \cdot
\begin{pmatrix}
0 & b_{12} & b_{13} & b_{14} \\
b_{21} & 0 & b_{23}  & b_{24} \\
b_{31} & b_{32} & b_{33} & 0
\end{pmatrix}
$$
This parametrizes a
$13$-dimensional subvariety $W$ of
 the hypersurface
$\mathcal{V}_3 = \{{\rm det}(P) = 0\}$
in $\PP^{15}$.
The variety $W$ is a component in the algebraic boundary of
${\rm Mix}_3$. To see this, we choose the $a_{ij}$
and $b_{ij}$ to be positive, and we note that $P$ lies outside
${\rm Mix}_3$ when precisely one of the $0$ entries
gets replaced by  $-\epsilon$.
The prime ideal of $W$ in $\mathbb{Q}[p_{11}, \ldots,p_{44}]$ is obtained by
eliminating the $17$ unknowns $a_{ij}$ and $b_{ij}$ from the $16$ scalar equations.
A direct computation with {\tt Macaulay 2} shows that
the variety $W$ is Cohen-Macaulay of codimension-$2$.
By the Hilbert-Burch Theorem,  it is defined
   by the $4 \times 4$-minors of the $4 \times 5$-matrix.
   This following specific matrix representation 
was suggested to us by  Aldo Conca and Matteo Varbaro:
$$ \begin{pmatrix}
p_{11} &  p_{12} &  p_{13} &  p_{14} &  0 \\
p_{21} &  p_{22} &  p_{23} &  p_{24} &  0 \\
p_{31} &  p_{32} &  p_{33} &  p_{34} &   p_{34} (p_{11}p_{22}-p_{12}p_{21})\\
p_{41} &  p_{42} &  p_{43} &  p_{44} &   p_{41} (p_{12} p_{24} - p_{14} p_{22}
) +p_{44} (p_{11}p_{22}-p_{12}p_{21})  \\
\end{pmatrix}. $$ 
Tte algebraic boundary of ${\rm Mix}_3$ consists of precisely
$304$ irreducible components,  namely the $16$ coordinate hyperplanes
and $288$ hypersurfaces that are all isomorphic to $W$. This is proved in the
forthcoming paper \cite{KRS}.
At present, we do not know the ML degree of $W$.
\hfill $\diamondsuit$
\end{example}

The definition of rank varieties and mixture models extends to
$m$-dimensional tensors $P$ of arbitrary format
$d_1 \times d_2 \times \cdots \times d_m$.
We refer to Landsberg's book \cite{Land} for an introduction to tensors
and their rank. Now,  $\mathcal{V}_r$ is
the variety of tensors of borderrank $\leq r$, the model
$\mathcal{M}_r$ is the set of all probability distributions in $\mathcal{V}_r$, 
and the model ${\rm Mix}_r$ is the subset of tensors of nonnegative rank $\leq r$.
Unlike in the matrix case $m = 2$, 
the mixture model for borderrank $r = 2$  is already quite interesting 
when  $m \geq 3$.
We state two theorems that characterize our objects.
The set-theoretic version of Theorem \ref{thm:manivel} is due to
Landsberg and Manivel \cite{LM}. The
 ideal-theoretic statement was proved more recently by Raicu \cite{Rai}.
 
\begin{theorem} \label{thm:manivel}
The variety $\mathcal{V}_2$ is defined by the $3 \times 3$-minors
of all flattenings of $P$.
\end{theorem}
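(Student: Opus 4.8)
The plan is to prove Theorem \ref{thm:manivel} in two stages: a set-theoretic description of $\mathcal{V}_2$ following Landsberg and Manivel \cite{LM}, and then a representation-theoretic upgrade to the ideal following Raicu \cite{Rai}. Write $V_i=\C^{d_i}$ and view a tensor $P$ as a point of $\PP(V_1\otimes\cdots\otimes V_m)$; a \emph{flattening} of $P$ is the matrix in $(\bigotimes_{i\in A}V_i)\otimes(\bigotimes_{j\in A^c}V_j)$ attached to a partition $\{1,\dots,m\}=A\sqcup A^c$, and $\mathcal{V}_2=\sigma_2(\mathrm{Seg})$ is the closure of the locus of tensors of rank at most $2$. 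Let $Z$ denote the locus cut out by the $3\times 3$ minors of all flattenings. The inclusion $\mathcal{V}_2\subseteq Z$ is immediate: flattening is linear and sends a decomposable tensor to a rank-one matrix, hence a rank-$\leq 2$ tensor to a rank-$\leq 2$ matrix, and ``rank $\leq 2$'' is a Zariski-closed condition, so it passes to the closure $\mathcal{V}_2$. Everything below is aimed at the reverse inclusion $Z\subseteq\mathcal{V}_2$.

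The first move is a \emph{subspace-variety reduction}. For $P\in Z$ the $i$-th mode space $U_i(P)\subseteq V_i$ --- the column space of the flattening $\{i\}\mid\{1,\dots,m\}\setminus\{i\}$ --- has dimension $\leq 2$, so $P\in W_1\otimes\cdots\otimes W_m$ with $W_i:=U_i(P)$. If some $\dim W_i=1$ then $P=w\otimes P'$ for an order-$(m-1)$ tensor $P'$ whose flattenings are flattenings of $P$, so $P'\in Z$ and one concludes by induction on $m$ that $P'\in\mathcal{V}_2$, hence $P\in\mathcal{V}_2$. Otherwise $P$ is \emph{concise}, and after a change of coordinates $P\in(\C^2)^{\otimes m}$ with all mode spaces equal to $\C^2$. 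Carried out uniformly, this exhibits $Z$ as a Grassmannian bundle over $\prod_i G(2,V_i)$ with fibre $Z_0^{(m)}:=\{\,P\in(\C^2)^{\otimes m}\text{ concise}:\text{all flattenings have rank}\leq 2\,\}$, while $\mathcal{V}_2$ is the bundle over the same base with fibre $\sigma_2$ of the Segre variety of $(\PP^1)^{\times m}$. For $m\leq 3$ every flattening of a tensor in $(\C^2)^{\otimes m}$ automatically has rank $\leq 2$ and the relevant $\sigma_2$ fills its ambient space, so the theorem reduces to proving $Z_0^{(m)}=\sigma_2$ of $(\PP^1)^{\times m}$ for $m\geq 4$.

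This concise base case is the step I expect to be the main obstacle. Slicing $P\in Z_0^{(m)}$ along the first factor, $P=e_0\otimes A+e_1\otimes B$ with $A,B\in(\C^2)^{\otimes(m-1)}$, only bounds the borderrank of $P$ by $4$; the real content of the flattening conditions is that for every $S\subseteq\{2,\dots,m\}$ the row spaces of $\mathrm{flat}_S A$ and $\mathrm{flat}_S B$ together span a space of dimension $\leq 2$, and likewise the column spaces. In particular $A$, $B$ and every combination $\lambda A+\mu B$ lie in $Z$, hence in $\mathcal{V}_2$ by induction, and the joint conditions force the rank-one summands of $A$ and $B$ to agree in all but one factor, so that the two rank-$\leq 2$ pieces of $P$ collapse in a one-parameter limit. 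I would make this precise by classifying, up to the action of $\mathrm{GL}_2^{\times m}$, the finitely many normal forms of concise tensors in $(\C^2)^{\otimes m}$ all of whose flattenings have rank $\leq 2$ --- these generalize the $W$-tensor of the case $m=3$ --- and exhibiting for each an explicit rank-$\leq 2$ degeneration, as in \cite{LM}. A normal-form-free alternative is a dimension count: prove that $Z$ is irreducible of dimension $2\bigl(\sum_i(d_i-1)\bigr)+1$, the expected dimension of $\mathcal{V}_2$, using the Grassmannian-bundle structure above together with a dimension estimate for $Z_0^{(m)}$; then $\mathcal{V}_2\subseteq Z$ with both sides irreducible of equal dimension forces $Z=\mathcal{V}_2$.

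To obtain the ideal-theoretic statement of \cite{Rai} I would upgrade the set-theoretic result using representation theory. The group $G=\mathrm{GL}(V_1)\times\cdots\times\mathrm{GL}(V_m)$ acts on the coordinate ring $\mathrm{Sym}(V_1^*\otimes\cdots\otimes V_m^*)$, whose graded components decompose into $G$-irreducibles indexed by $m$-tuples of partitions via the generalized Cauchy formula. Both $I(\mathcal{V}_2)$ and the ideal $J$ generated by the $3\times 3$ minors of all flattenings are $G$-stable, hence direct sums of isotypic pieces, and the set-theoretic result gives $\sqrt{J}=I(\mathcal{V}_2)$. It then remains to show that no partition-tuple occurring in $I(\mathcal{V}_2)$ is missing from $J$, equivalently that $J$ is already a radical ideal; this is a finite (though delicate) computation with the relevant plethysms and with the Hilbert series of $\mathrm{Sym}(V_1^*\otimes\cdots\otimes V_m^*)/J$, and is exactly the technical core of \cite{Rai}, which I would invoke rather than reprove.
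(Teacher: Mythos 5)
The paper does not actually prove this statement: it is quoted from the literature, with the set-theoretic version attributed to Landsberg--Manivel \cite{LM} and the ideal-theoretic version to Raicu \cite{Rai}, so there is no in-paper argument to compare yours against step by step. Your skeleton is the standard architecture of those proofs and the parts you carry out are sound: the easy inclusion $\mathcal{V}_2\subseteq Z$, the observation that the one-row flattenings bound the mode spaces and hence give the subspace/inheritance reduction, the induction when some mode space is a line, and the reduction of everything to concise tensors in $(\C^2)^{\otimes m}$ with $m\geq 4$ (your remark that slices $\lambda A+\mu B$ again lie in $Z$ is also correct, since contraction can only decrease flattening ranks). But the two load-bearing steps are invoked rather than proved: the concise case itself --- whether via the normal-form classification ``as in \cite{LM}'' or via the irreducibility-plus-dimension count for $Z$, either of which is essentially the whole content of the set-theoretic theorem --- and the radicality/ideal-generation statement, which you explicitly outsource to \cite{Rai}. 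So as a standalone proof there is a genuine gap exactly where you flag it; as a reduction-plus-citation it is consistent with how the paper itself treats the result. One small imprecision to watch in the normal-form route: a border-rank-$2$ slice such as the $W$-tensor has no decomposition into two rank-one summands, so the heuristic ``the rank-one summands of $A$ and $B$ agree in all but one factor'' cannot be taken literally and the classification must handle degenerate (limit) slices separately. Also, the expected-dimension formula $2\bigl(\sum_i(d_i-1)\bigr)+1$ you quote for the dimension-count alternative fails for $m=2$ (where $\sigma_2$ is defective), though that case is the classical determinantal one and harmless.
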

 
 Here, {\em flattening} means picking any subset $A $ of
$[n]=\{1,2,\ldots,n\}$ with $1 \leq |A| \leq n-1$ and writing the
tensor $P$ as an ordinary matrix with $\prod_{i \in A}d_i$ rows and
$\prod_{j \not\in A } d_j$ columns. 

\begin{theorem}
The mixture model $\,{\rm Mix}_2$ is the subset of supermodular distributions in $\mathcal{M}_2$.
\end{theorem}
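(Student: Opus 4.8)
The plan is to prove the two inclusions separately. Throughout, write ${\rm Mix}_2$ for the set of $m$-dimensional probability tensors of nonnegative rank at most two, and call a tensor $P=(p_{i_1\cdots i_m})$ \emph{supermodular} if, after a suitable permutation of the states in each coordinate, every $2\times 2$ minor of every matrix slice obtained by fixing all but two of the coordinates is nonnegative. The inclusion ${\rm Mix}_2\subseteq\{\text{supermodular tensors in }\mathcal M_2\}$ is the formal direction. Given a decomposition $P=R_1+R_2$ with each $R_\nu=\bigotimes_{t=1}^{m}a^{(t)}_\nu$ of rank one and with nonnegative factors $a^{(t)}_\nu\ge 0$, first note that $P$ has border rank at most two, so $P\in\mathcal V_2$ and hence $P\in\mathcal M_2$. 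Next, fixing two coordinates $k\neq l$ together with all remaining indices and expanding, the $2\times 2$ minor of the resulting $d_k\times d_l$ slice on rows $s<s'$ and columns $t<t'$ equals
\[
\mu\cdot\bigl(a^{(k)}_{1,s}a^{(k)}_{2,s'}-a^{(k)}_{1,s'}a^{(k)}_{2,s}\bigr)\cdot\bigl(a^{(l)}_{1,t}a^{(l)}_{2,t'}-a^{(l)}_{1,t'}a^{(l)}_{2,t}\bigr),
\]
where $\mu=\bigl(\prod_{j\neq k,l}a^{(j)}_{1,i_j}\bigr)\bigl(\prod_{j\neq k,l}a^{(j)}_{2,i_j}\bigr)\ge 0$. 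Reordering the states of each coordinate $k$ so that the two columns of the $d_k\times 2$ matrix $[\,a^{(k)}_1\mid a^{(k)}_2\,]$ are listed in a common monotone order makes every such minor nonnegative; hence $P$ is supermodular.

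For the reverse inclusion I would take a supermodular $P\in\mathcal M_2$ and construct a decomposition into nonnegative rank-one tensors. First I would reduce to the case $P>0$: when $P$ has a zero entry, supermodularity forces the support of $P$ to be a staircase along each two-coordinate slice, and one peels off the rank-one pieces adapted to that support; alternatively one perturbs $P$ inside $\mathcal V_2\cap\Delta$ to a strictly positive supermodular tensor, decomposes there, and passes to the limit, using that having nonnegative rank at most two is a closed condition on the compact simplex. Assuming $P>0$, Theorem~\ref{thm:manivel} shows that every flattening of $P$ has rank at most two; let $V_k\subseteq\R^{d_k}$ be the column space of the flattening along coordinate $k$, so $\dim V_k\le 2$ and $P$ lies in $V_1\otimes\cdots\otimes V_m$. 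Each $V_k$ is spanned by the nonnegative fibers of $P$ in direction $k$, so the cone these fibers generate is pointed and at most two-dimensional; fix nonnegative vectors $f^{(k)}_1,f^{(k)}_2$ along its extreme rays and expand $P=\sum_{\varepsilon\in\{1,2\}^m}C_\varepsilon\,\bigotimes_k f^{(k)}_{\varepsilon_k}$ for a binary tensor $C$. Membership in $\mathcal V_2$ is basis-independent, and by Cauchy--Binet the two-coordinate slice minors of $P$ are, after reordering states, nonnegative multiples of those of $C$; so $C$ again lies in $\mathcal V_2$ and inherits the needed sign conditions. Since $f^{(k)}_\nu\ge 0$, any nonnegative rank-two decomposition of $P$ in these bases is one in the original coordinates, so it suffices to produce such a decomposition under the extra hypothesis that $\dim V_k\le 2$ for every $k$.

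The hard part is this last step. If $P$ has rank at most one it is a positive rank-one tensor, hence of nonnegative rank one. If $P$ has rank exactly two and is concise with $m\ge 3$, I would invoke the essential uniqueness of rank-two decompositions of concise tensors (Kruskal's theorem) to pin down the two rank-one directions $\R u^{(k)},\R v^{(k)}\subseteq V_k$; then verify that this decomposition is real, identify $u^{(k)}$ and $v^{(k)}$, up to positive scaling, with the two extreme fibers of $P$ in direction $k$ so that they are nonnegative, and finally feed supermodularity and $P>0$ into the slice-minor identity of the first paragraph to conclude that the two scalar coefficients are nonnegative and that the labeling of ``which rank-one piece is first'' can be chosen consistently across all coordinates. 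The degenerate subcases, where some factor matrix $[\,u^{(k)}\mid v^{(k)}\,]$ has rank one or $P$ fails to be concise in some coordinate, I would dispatch by restricting to a smaller format and inducting on $m$, the base case $m=2$ being Proposition~\ref{prop:mix}. I expect essentially all of the genuine work to sit in this last paragraph: proving reality of the rank-two decomposition, organizing the sign analysis over all $\binom{m}{2}$ coordinate pairs so that a single orientation works simultaneously, and dealing carefully with the boundary of $\Delta$; the remaining steps are bookkeeping.
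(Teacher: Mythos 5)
First, a point of comparison: the paper does not prove this theorem at all; it states it and cites \cite{ARSZ}, so your proposal has to be judged against the argument there rather than against anything in the text. Measured that way, the most immediate problem is a definitional mismatch. You define supermodularity as nonnegativity (after reordering states) of the $2\times 2$ minors of the two-coordinate slices, i.e.\ the conditions obtained by fixing all but two indices. The notion used in the paper and in \cite{ARSZ} is stronger: it is the full family of inequalities $P(x\vee y)\,P(x\wedge y)\ge P(x)\,P(y)$ for \emph{all} pairs of index vectors, and in the $2\times2\times2$ display (\ref{eq:nineineq}) this visibly includes the non-slice inequalities such as $p_{111}p_{222}\ge p_{112}p_{221}$. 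For strictly positive tensors the two notions coincide (the classical local-to-global property of MTP${}_2$ functions), but ${\rm Mix}_2$ contains tensors with zero entries, which is exactly where they differ. Consequently your forward inclusion, as computed, only establishes the slice-minor inequalities and not the statement being claimed; it is repairable (the same monotone reordering makes $\prod_{k:x_k\ge y_k}\alpha_k\ge\prod\beta_k$ and the reverse on the complementary set, which gives the full inequality for a sum of two nonnegative rank-one terms), but that verification is missing, and with your weaker definition the reverse inclusion is not even the right statement to aim at.

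The second and more serious issue is that the reverse inclusion, which is the entire content of the theorem, is only sketched. (i) The reduction to $P>0$ rests either on an unproved ``staircase support'' claim or on the density of strictly positive supermodular points of $\mathcal{V}_2$ near a boundary point of $\mathcal{V}_2$; neither is justified, and the naive perturbation $(1-t)P+tQ$ does not stay in $\mathcal{V}_2$ since $\mathcal{V}_2$ is not convex, while proving density by first decomposing $P$ would be circular. (ii) In the reduction to the binary coefficient tensor $C$, Cauchy--Binet only controls slice minors of $P$ in terms of those of $C$, with multipliers that are $2\times2$ minors of the matrices $[f^{(k)}_1\,|\,f^{(k)}_2]$ and can vanish or have mixed signs; it says nothing about transferring the diagonal inequalities of type $c_{111}c_{222}\ge c_{112}c_{221}$, which are needed for the binary case. (iii) The core binary case itself --- reality of the Kruskal-unique rank-two decomposition (a real tensor of complex rank $2$ can have real rank $3$, so supermodularity must enter here), nonnegativity of the factors, and the non-concise/degenerate cases --- is precisely where you concede ``essentially all of the genuine work'' sits, and none of it is carried out. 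As it stands the proposal is a plausible plan whose load-bearing steps are missing; the actual proof, including the treatment of zero entries and the reduction to the $2\times2\times\cdots\times2$ case, is the substance of \cite{ARSZ} and is not recovered here.
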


This theorem was proved in \cite{ARSZ}.
Being {\em supermodular} means that $P$
satisfies a  natural family of quadratic binomial inequalities.
We explain these for $m=3, d_1 = d_2 = d_3 = 2$.

\begin{figure}[h] \label{fig:slice}
\includegraphics[scale=.28]{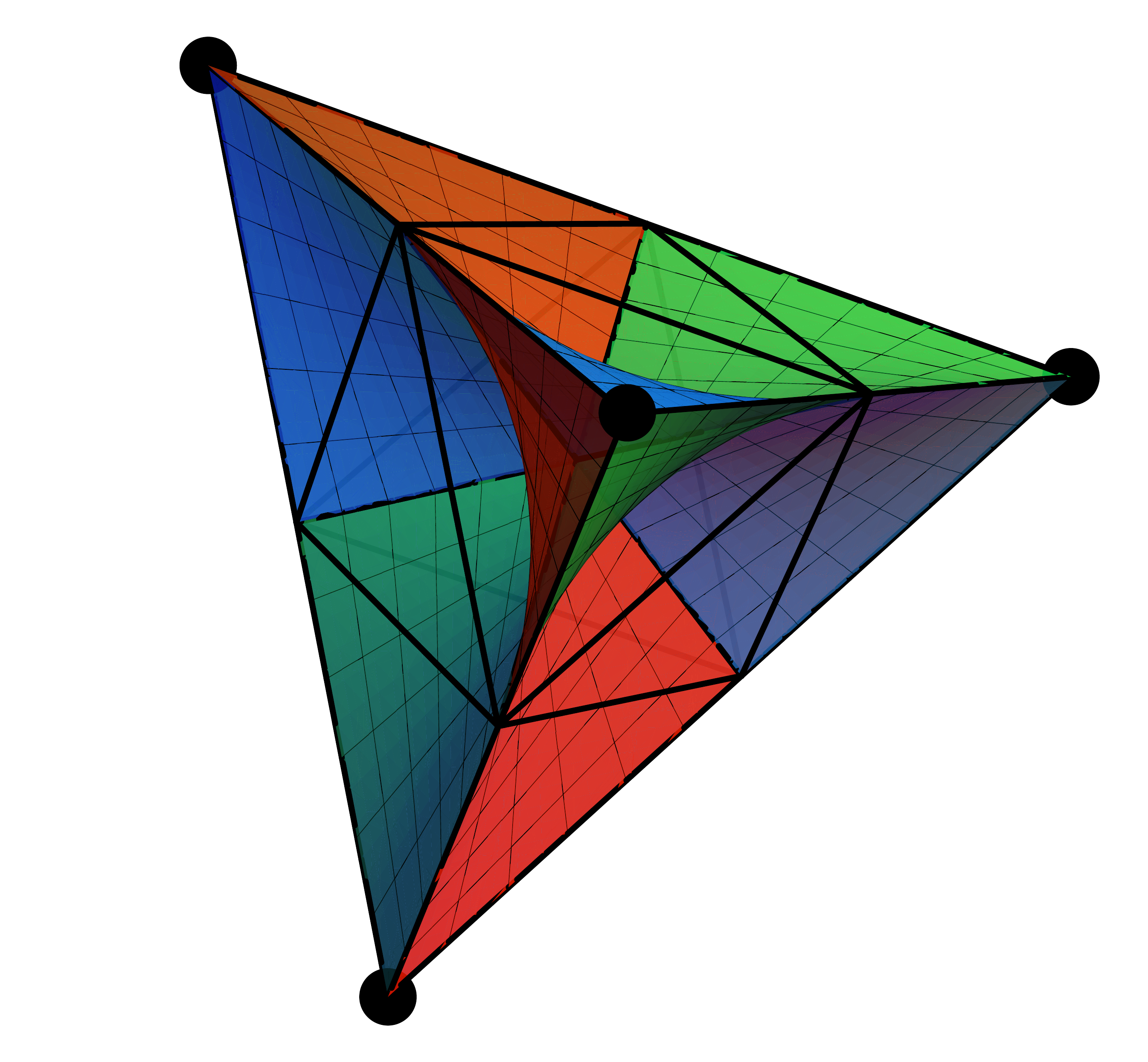} 
\includegraphics[scale=.31, ,trim=210 200 180 0, clip=true]{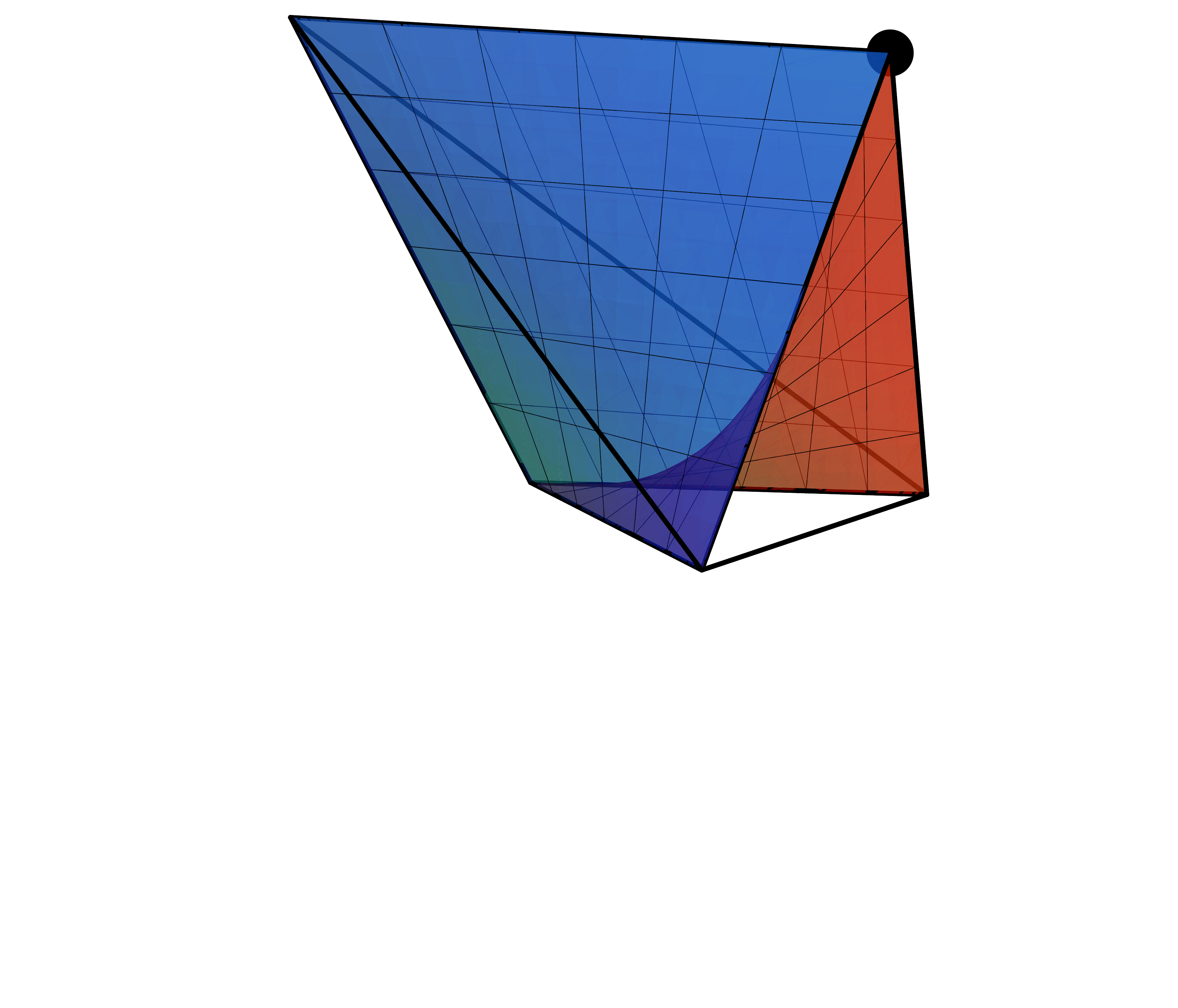}
\caption{A $3$-dimensional slice of the 
$7$-dimensional model of $2 {\times} 2 {\times} 2$ tensors of nonnegative rank $\leq                                                                                 
2$. Each toric cell is bounded by $3$ quadrics and contains a vertex
of the tetrahedron.}
\end{figure}

\begin{example} \rm
We consider $2 \times 2 \times 2$ tensors.
Since secant lines of the Segre variety $\PP^1 {\times} \PP^1 {\times} \PP^1 $ fill all of
$\PP^7$, we have that $\mathcal{V}_2 = \PP^7$ and $\mathcal{M}_2 = \Delta_7$. 
The mixture model ${\rm Mix}_2$ is an interesting, full-dimensional, closed, semi-algebraic 
subset of $\Delta_7$. By definition, 
${\rm Mix}_2$ is the image of a 2-to-1 map
$\phi : (\Delta_1)^7 \rightarrow \Delta_7$ analogous to 
(\ref{eq:mixformula}). 
   The branch locus
is the  $2 {\times} 2 {\times} 2$-hyperdeterminant, which is
a hypersurface in $\PP^7$ of degree $4$ and ML degree $13$.

The analysis in \cite[\S 2]{ARSZ} 
represents the  model ${\rm Mix}_2$ as the union of   four {\em toric cells}. One
of these toric cells is the set of tensors satisfying
\begin{equation}
\label{eq:nineineq}
\begin{matrix}
p_{111} p_{222} \geq p_{112} p_{221} & \quad
p_{111} p_{222} \geq p_{121} p_{212} & \quad
p_{111} p_{222} \geq p_{211} p_{122} \\
p_{112} p_{222} \geq p_{122} p_{212} & \quad
p_{121} p_{222} \geq p_{122} p_{221} & \quad
p_{211} p_{222} \geq p_{212} p_{221} \\
p_{111} p_{122} \geq p_{112} p_{121} & \quad
p_{111} p_{212} \geq p_{112} p_{211} & \quad
p_{111} p_{221} \geq p_{121} p_{211}
\end{matrix}
\end{equation}
A nonnegative $2 {\times} 2 {\times} 2$-tensor $P$ in $\Delta_7$ is {\em supermodular} 
if it satisfies these inequalities, possibly after label swapping $1 \leftrightarrow 2$.
We visualize  ${\rm Mix}_2$ by
restricting to the $3$-dimensional subspace $H$ given by
$\,p_{111} = p_{222}, \,
p_{112} = p_{221}, \,
p_{121} = p_{212}\,$ and $\,
p_{211} = p_{122}$.
The intersection $H \cap \Delta_7$ is a  tetrahedron,
and we consider $H \cap {\rm Mix}_2$ inside that tetrahedron.
The restricted model $H \cap {\rm Mix}_2$ is shown on the
left in Figure 1. It consists of four
toric cells as shown on the right side.
The boundary is given by three quadratic
surfaces, shown in red, green and blue, and
which are obtained from either the first or the
second row in (\ref{eq:nineineq}) by restriction to $H$.

The boundary analysis suggested in
Example \ref{ex:44nonneg} turns out to be quite
simple in the present example.
All boundary strata of  the model ${\rm Mix}_2$
are varieties of ML degree $1$. 

One such boundary stratum for ${\rm Mix}_2$ is the
$5$-dimensional toric variety
$$ X \,=\, V( p_{112} p_{222}-p_{122} p_{212}, p_{111} p_{122} -p_{112} p_{121},
p_{111} p_{222}-p_{121} p_{212})
\,\,\, \subset \,\,\, \PP^7. $$
As a preview for what is to come, we report
its ML bidegree and its sectional ML degree:
\begin{equation}
\label{eq:BSpreview}
\begin{matrix}
B_X(p,u) & = & p^7\,+\, 2 p^6 u\,+\,3p^5 u^2\,+\,3p^4u^3\,+\,3 p^3u^4\,+\,3p^2u^5, \\
S_X(p,u) & = & p^7+ 14 p^6 u+30 p^5 u^2+30 p^4u^3+15p^3u^4+3p^2u^5.
\end{matrix}
\end{equation}
In the next section, we shall study the class of toric varieties
and the class of varieties having  ML degree 1. Our variety $X$
lies in the intersection of these two important classes.
\hfill $\diamondsuit$
\end{example}

\section{Third Lecture}

In our third lecture we start out with the
likelihood geometry of embedded toric varieties.
Fix a $(d{+}1) \times (n{+}1)$ integer matrix $A = (a_0,a_1,\ldots,a_n)$ of rank $d{+}1$ 
that has $(1,1,\ldots,1)$ as its last row. This matrix defines an effective action
of the torus $\,(\mathbb{C}^*)^d$ on projective space~$\PP^n$:
\[
(\mathbb{C}^*)^d \times \mathbb{P}^n \longrightarrow \mathbb{P}^n, \qquad t \times (p_0:p_1:\cdots:p_n) \longmapsto (t^{\tilde a_0} \cdot p_0:t^{\tilde a_1} \cdot p_1:\cdots:t^{\tilde a_n} \cdot p_n).
\]
Here $\tilde a_i$ is the column vector $a_i$ with the last entry $1$ removed.
We
also fix  
\[
c = (c_0,c_1,\ldots,c_n)  \in (\mathbb{C}^*)^{n+1},
\]
viewed as a point in $\PP^n$.
Let $X_c $ be the closure in $\PP^n$ of the orbit
$\,(\mathbb{C}^*)^d \cdot c$. 
This is a projective toric variety of dimension $d$, defined
by the pair $(A,c)$.
The ideal that defines $X_c$ is the
familiar {\em toric ideal} $I_A$ as in \cite[\S 1.3]{LiAS}, but
with $p = (p_0,\ldots,p_n)$ replaced by
\begin{equation}
\label{eq:poverc}
 p/c \,\, = \,\,
\biggl(
\frac{p_0}{c_0},
\frac{p_1}{c_1},\ldots,
\frac{p_n}{c_n}
\biggr).
\end{equation}

\begin{example} \label{ex:cubicsurface}
\rm
Fix $d = 2$ and $n=3$. The matrix
$$ A = \begin{pmatrix}
0 &  3 & 0 & 1 \\ 
0 &  0 & 3 & 1 \\
1 &  1 & 1 & 1
\end{pmatrix}
$$
specifies the following family of toric surfaces of degree three in $\PP^3$:
$$ 
X_c \,\,=  \,\,
\overline{\{ (c_0:c_1x_1^3:c_2x_2^3:c_3x_1x_2) \,: (x_1,x_2) \in (\mathbb{C}^*)^2 \} } \\
\,\,= \,\, V(c_3^3 \cdot p_0 p_1 p_2\, -\, c_0c_1c_2 \cdot p_3^3).
$$
Of course, the prime ideal of any particular surface $X_c$ is the principal  ideal generated by
$$ 
\frac{p_0}{c_0} \frac{p_1}{c_1} \frac{p_2}{c_2}  \,-\,
\biggl(\frac{p_3}{c_3} \biggr)^3. $$
How does the ML degree of $X_c$ depend on the parameter 
$c = (c_0,c_1,c_2,c_3) \in (\mathbb{C}^*)^4$?
\hfill $\diamondsuit$
\end{example}

We shall express the ML degree of the toric variety $X_{c}$
in terms of the complement of a hypersurface in the
torus $(\mathbb{C}^*)^d$.
The pair $(A,c)$ define the sparse Laurent polynomial
$$ f(x) \,\,=\,\, c_0 \cdot x^{\tilde a_0} + c_1 \cdot x^{\tilde a_1} + \cdots + c_n \cdot x^{\tilde a_n}.$$

\begin{theorem} \label{thm:toricMLE}
The ML degree of the $d$-dimensional toric variety $\,X_c \subset \PP^n\,$ is equal to  $(-1)^d$
times the Euler characteristic of the very affine variety
\begin{equation}
\label{eq:hscomp}
 X_c \backslash \mathcal{H} \,\,\, \simeq \,\,\,
\bigl\{ x \in (\mathbb{C}^*)^d \,: \, f(x) \not= 0 \bigr\}. 
\end{equation}
For generic $c$, the ML degree  agrees with the degree of $X_{c}$, which is the
normalized volume of the $d$-dimensional lattice polytope ${\rm conv}(A)$
obtained as the convex hull of the columns of~$A$.
\end{theorem}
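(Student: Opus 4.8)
The plan is to deduce both assertions from Theorem \ref{thm:eulerchar} together with classical toric geometry. The first point is that the very affine variety $X_c \backslash \mathcal{H}$ is smooth of dimension $d$, so that Theorem \ref{thm:eulerchar} applies even though $X_c$ itself is typically singular. Here one uses that a point of $X_c$ all of whose homogeneous coordinates $p_i$ are nonzero must lie on the dense torus orbit $O = (\mathbb{C}^*)^d \cdot c$: by the orbit--face correspondence, every torus-invariant boundary stratum of $X_c$ is contained in some coordinate hyperplane $\{p_i = 0\}$, so $X_c \backslash \bigcup_i \{p_i = 0\} \subseteq O$. Since orbits of algebraic group actions are smooth and $X_c \backslash \mathcal{H} = O \cap \{p_+ \neq 0\}$ is open in $O$, smoothness follows. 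Hence $\mathrm{MLdegree}(X_c) = (-1)^d \chi(X_c \backslash \mathcal{H})$.

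Next I would establish the isomorphism in (\ref{eq:hscomp}). The monomial parametrization $\psi\colon (\mathbb{C}^*)^d \to O$, $x \mapsto (c_0 x^{\tilde a_0}:\cdots:c_n x^{\tilde a_n})$, is surjective, and pulling back the $n+2$ hyperplanes of $\mathcal{H}$ along $\psi$ one sees that each coordinate hyperplane $\{p_i = 0\}$ pulls back to the empty set (the monomials $c_i x^{\tilde a_i}$ never vanish on the torus), while $\{p_+ = 0\}$ pulls back to $\{f = 0\}$. Thus $\psi$ identifies $(\mathbb{C}^*)^d \backslash \{f = 0\}$ with $X_c \backslash \mathcal{H}$ up to the finite kernel of $\psi$. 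Equivalently, since the orbit $O$ is itself an algebraic torus of rank $d$ whose character lattice is the lattice $L$ affinely generated by the columns of $A$, one may identify $O \cong (\mathbb{C}^*)^d$ and regard $f$ as written in coordinates on $O$; its Newton polytope is then $\mathrm{conv}(A)$ as a lattice polytope with respect to $L$. Either way the displayed isomorphism holds, and with it the first sentence of the theorem.

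For the generic statement I would compute $\chi(X_c \backslash \mathcal{H})$ directly. By additivity of the Euler characteristic and $\chi\big((\mathbb{C}^*)^d\big) = 0$, one gets $\chi\big((\mathbb{C}^*)^d \backslash \{f = 0\}\big) = -\chi\big(\{f = 0\}\big)$. For generic $c$ the Laurent polynomial $f$ is nondegenerate with Newton polytope $\mathrm{conv}(A)$, so the Newton-polytope formula for the Euler characteristic of a nondegenerate hypersurface in a torus gives $\chi(\{f = 0\}) = (-1)^{d-1}\, d!\,\mathrm{vol}(\mathrm{conv}(A))$, with the volume normalized by the relevant lattice. Combining these identities yields $\mathrm{MLdegree}(X_c) = d!\,\mathrm{vol}(\mathrm{conv}(A))$. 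Finally, $d!\,\mathrm{vol}(\mathrm{conv}(A))$ is precisely the degree of the projective toric variety $X_c$, by the classical identification of the degree of an embedded toric variety with the normalized volume of the convex hull of its exponent vectors (see \cite{GKZ}); this gives the last sentence. An alternative route to the first formula starts from the explicit description of $\mathcal{L}_{X_c}$ in Proposition \ref{prop:toricLC} and counts solutions of the cleared critical equations by a Bernstein--Kushnirenko argument, but the Euler-characteristic route above is cleaner once Theorem \ref{thm:eulerchar} is available.

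\textbf{Main obstacle.} The conceptual heart is the first step: recognizing that, although $X_c$ is singular, the open set $X_c \backslash \mathcal{H}$ lies entirely inside the smooth dense orbit, which is exactly what makes Theorem \ref{thm:eulerchar} (and, ultimately, the sch\"on machinery of Section \ref{Proofs}) applicable. The most substantial external input is then the Newton-polytope formula for the Euler characteristic of a nondegenerate torus hypersurface; and the lattice bookkeeping needed when the columns of $A$ fail to span $\mathbb{Z}^d$ is a minor but genuine subtlety that must be carried carefully, so that the degree of $\psi$ cancels against the discrepancy between the ambient normalized volume and the actual degree of $X_c$.
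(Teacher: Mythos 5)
Your argument is correct, and its first half---identifying $X_c \backslash \mathcal{H}$ with the dense orbit intersected with $\{p_+ \neq 0\}$, hence with $\{f \neq 0\} \subset (\mathbb{C}^*)^d$, observing smoothness, and invoking Theorem \ref{thm:eulerchar}---is essentially the paper's proof of the first assertion; your explicit appeal to the orbit--face correspondence just spells out why a point of $X_c$ with all $p_i \neq 0$ lies on the dense orbit, a step the paper leaves implicit, and your attention to a possible finite kernel of the monomial map is a refinement (the paper assumes the torus action is effective, so the parametrization is injective and the kernel issue disappears). Where you genuinely diverge is the generic-$c$ statement. The paper deduces it from Proposition \ref{prop:toricLC}: by Birch's theorem the critical equations are linear in $p$ once $u$ is fixed, so for generic $c$ and $u$ the likelihood locus is a generic codimension-$d$ linear section of $X_c$, whose cardinality is $\deg X_c$; the identification of $\deg X_c$ with the normalized volume of ${\rm conv}(A)$ is then the classical Kushnirenko-type fact. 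You instead compute $(-1)^d\chi(X_c\backslash\mathcal{H})$ directly: additivity gives $-\chi(\{f=0\})$, and the Khovanskii/Danilov--Khovanskii formula for a nondegenerate torus hypersurface gives $\chi(\{f=0\})=(-1)^{d-1}d!\,{\rm vol}({\rm conv}(A))$, so the ML degree equals the normalized volume, which you then match with $\deg X_c$ by the same classical degree-equals-volume fact. Both routes are sound. The paper's is shorter and stays inside likelihood geometry, needing no nondegeneracy analysis and making clear where genericity of $c$ enters through transversality of the linear section; yours buys an independent topological computation, makes explicit that nondegeneracy of $f$ (equivalently genericity of $c$) is the real hypothesis, and correctly flags the lattice-index bookkeeping that the paper sidesteps via the effectiveness assumption.
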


\begin{proof} We first argue that the identification (\ref{eq:hscomp}) holds.
The map 
\[ x \,\,\longmapsto \,\, {p} \,=\, (c_0 \cdot{ x}^{\tilde a_0}:
 c_1 \cdot x^{\tilde a_1} : \cdots : c_n \cdot  { x}^{\tilde a_n}) \]
defines an injective group homomorphism from 
$(\mathbb{C}^*)^d$ into the dense torus of $\PP^n$. Its image
is equal to the dense torus of $X_{c}$, so we have an isomorphism
between $(\mathbb{C}^*)^d$ and the dense torus of $X_{ c}$.
Under this isomorphism, the affine open set $\{f \not= 0\}$ in $(\mathbb{C}^*)^d$
is identified with the affine open set $\{p_0+\cdots + p_n \not= 0\}$
in the dense torus of $X_{c}$. The latter is precisely
$X_{ c} \backslash \mathcal{H}$.
Since $(\mathbb{C}^*)^d$ is smooth, we see that
$X_{c} \backslash \mathcal{H}$ is smooth, so 
our first assertion follows from Theorem \ref{thm:eulerchar}.
The second assertion is a consequence of the description of
the likelihood correspondence $\mathcal{L}_{X_{ c}}$ via
 linear sections of $X_{{ c}}$ that is given in Proposition 
\ref{prop:toricLC} below.
\end{proof}

\begin{example} 
\label{ex:cubicsurface2}
\rm We return to the cubic surface $X_{c}$
in Example \ref{ex:cubicsurface}. For a general parameter vector
${ c}$, the ML degree of $X_{{ c}}$ is $3$.
For instance, the surface $V(p_0p_1p_2-p_3^3) \subset \PP^3$ has ML degree $3$.
 However, the ML degree of $X_{{ c}}$ drops to $2$ whenever the plane curve defined by
\[
 f(x_1,x_2)  = c_0+c_1x_1^3+c_2x_2^3+c_3x_1x_2 \,
 \]
has a singularity in $(\mathbb{C}^*)^2$.
For instance, this happens for $c = (1:1:1:-3)$.
The corresponding surface $V(27p_0p_1p_2+p_3^3) \subset \PP^3$ has ML degree $2$.
\hfill $ \diamondsuit$
\end{example}

The isomorphism (\ref{eq:hscomp}) has a nice interpretation
in terms of Convex Optimization. Namely, it implies that
maximum likelihood estimation for toric varieties is
equivalent to global minimization of {\em posynomials},
and hence to the most fundamental case of {\em Geometric Programming}. 
We refer to \cite[\S 4.5]{BoydVan} for an introduction to 
posynomials and geometric programming. 

We write $| \,\cdot \,|$ for the 1-norm on $\mathbb{R}^{n+1}$, 
we set ${b} = A{ u}$, and we assume that ${ c} = (c_0,c_1,\ldots,c_n)$
is in $ \mathbb{R}_{>0}^{n+1}$.
Maximum likelihood estimation for toric models is the  problem
\begin{equation}
\label{eq:toricMLE2}
 {\rm Maximize} \,\,\,\, \frac{p^{ u}}{|p|^{|{ u}|}} \,\,\,\,
\hbox{subject to}\,\, \,p \,\in\, X_{ c} \cap \Delta_n .
\end{equation}
Setting $p_i = c_i \cdot { x}^{{ \tilde a_i}}$ as above, this problem becomes
 equivalent to  the geometric program
\begin{equation}
\label{eq:toricMLE3}
 {\rm Minimize}\, \,\,\,\frac{f({ x})^{|{ u}|}} {{ x}^{ b}} \,\,\,\,
\hbox{subject to}\,\, \,{ x} \in \mathbb{R}_{>0}^d .
\end{equation}
By construction, $f({ x})^{|{u}|}/{x}^{ b}$ is a posynomial
whose Newton polytope contains the origin.
Such a posynomial attains a unique global minimum on the open orthant $\mathbb{R}^d_{> 0}$.
This can be seen by convexifying as in \cite[\S 4.5.3]{BoydVan}.
This global minimum of (\ref{eq:toricMLE3}) corresponds to the
 solution of (\ref{eq:toricMLE2}), which exists and is unique by
Birch's Theorem \cite[Theorem 1.10]{PS}.

\begin{example} 
\label{ex:cubicsurface3} \rm
Consider the geometric program
for the surfaces in Example \ref{ex:cubicsurface}, with
$$ A \,=\, 
   \begin{pmatrix}
\,0 &  3  & 0 & 1 \,\\ 
\,0 &  0  & 3 & 1 \,\\
\,1  & 1  & 1  & 1\,
\end{pmatrix} \quad 
\hbox{and} \quad 
{ u} \,=\, (0,0,0,1). $$
The problem  (\ref{eq:toricMLE3}) is to
find the global minimum, over all positive $x=(x_1,x_2)$, of the function
$$ \frac{ f(x_1,x_2)}{x_1x_2} \,\, = \,\, 
c_0 x_1^{-1} x_2^{-1} +c_1x_1^2 x_2^{-1} +c_2 x_1^{-1} x_2^2+c_3 .$$
This is equivalent to
maximizing $p_3/p_+$ subject to 
$\,{ p} \in V(c_3^3 \cdot p_0 p_1 p_2\, -\, c_0c_1c_2 \cdot p_3^3) \cap \Delta_3$.
\hfill $\diamondsuit$
\end{example}

We now describe the toric likelihood correspondence
 $\mathcal{L}_{X_{ c}}$ in $\PP^n \times \PP^n$ associated with the pair $(A,{c})$.
 This is the likelihood correspondence of the toric variety
 $X_{ c} \subset \PP^n$ defined above.

\begin{proposition} 
\label{prop:toricLC}
On the open subset $\,(X_{ c} \backslash \mathcal{H}) \times \PP^n$, 
the toric likelihood correspondence $\mathcal{L}_{X_{ c}}$  is defined 
 by the $2 \times 2$-minors  of  the  $2 \times (d {+} 1) $-matrix
\begin{equation}
\label{eq:povercanduoverc}
 \begin{pmatrix} { p}/{ c} \cdot A^T \\
 {u}/{ c} \cdot A^T \end{pmatrix}. 
 \end{equation}
 Here the notation ${ p}/{ c}$ is as in {\rm (\ref{eq:poverc})}.
  In particular, for any fixed data vector ${ u}$,
the critical points of $\ell_{ u}$ are characterized by
a linear system of equations in ${ p}$ restricted to $X_{c}$.
\end{proposition}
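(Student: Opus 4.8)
The plan is to reduce the proposition to a one-line computation of the logarithmic derivative of $\ell_u$ on the dense torus of $X_c$. First, over the open set $(X_c\backslash\mathcal{H})\times\PP^n$ the closure in Definition \ref{def:MLD_and_LC} adds nothing: the dense torus of a toric variety is smooth, so $X_c\backslash\mathcal{H}$ lies in the regular locus of $X_c$, and criticality of $\ell_u|_{X_c}$ is a Zariski-closed condition there; hence on this open set $\mathcal{L}_{X_c}$ is exactly the locus of pairs $(p,u)$ with $p\in X_c\backslash\mathcal{H}$ at which $\ell_u$ restricted to $X_c$ is critical. By the proof of Theorem \ref{thm:toricMLE}, $X_c\backslash\mathcal{H}$ is identified with the affine open set $\{f\neq 0\}$ of $(\C^*)^d$ via $x\mapsto p$, $p_i=c_i\,x^{\tilde a_i}$.

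Next I would pull back the log-likelihood and differentiate. Along this parametrization $\log\ell_u=\mathrm{const}+\sum_i u_i\,(\tilde a_i\cdot\log x)-u_+\log f(x)$, so the equations $\partial\log\ell_u/\partial\log x_k=0$ read $\sum_i u_i\,(\tilde a_i)_k=u_+\cdot\big(\partial_{\log x_k}f\big)/f$ for $k=1,\ldots,d$. Using $\partial_{\log x_k}f=\sum_i c_i\,(\tilde a_i)_k\,x^{\tilde a_i}=\sum_i(\tilde a_i)_k\,p_i$ (the $k$-th coordinate of $Ap$) and $f=p_+$ on $X_c$, these become $p_+\sum_i u_i\,(\tilde a_i)_k=u_+\sum_i p_i\,(\tilde a_i)_k$. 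Because the last row of $A$ equals $(1,\ldots,1)$, the $(d{+}1)$-st coordinate (coming from that last row) only produces the trivial identity $p_+u_+=u_+p_+$; so the whole system is equivalent to the statement that the vectors $Au$ and $Ap$ in $\C^{d+1}$ are proportional. Spelled out in the coordinates $p/c$ of (\ref{eq:poverc}), this is precisely the vanishing of all $2\times2$ minors of the $2\times(d{+}1)$ matrix displayed in the statement. Equivalently, one may recognize this as the complex critical-point system of the geometric program (\ref{eq:toricMLE3}), which gives an alternative route.

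For the reverse inclusion I would verify that these minors cut out nothing extraneous over the open set. On $X_c\backslash\mathcal{H}$ the vector $Ap$ has last coordinate $p_+\neq 0$, so $Ap\neq 0$; for generic $u$ also $Au\neq 0$; hence the rank-$\leq 1$ condition forces $Ap=\lambda\,Au$, and reading off the last coordinate forces $\lambda=p_+/u_+$, which returns exactly the critical equations. Thus over $(X_c\backslash\mathcal{H})\times\PP^n$ the scheme cut out by the minors coincides with $\mathcal{L}_{X_c}$. The last clause is then immediate: for fixed $u$ the second row of the matrix is a constant vector, so its $2\times2$ minors are linear forms in $p$, and intersecting their zero set with $X_c$ characterizes the critical points of $\ell_u$ by linear equations.

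The crux is not the calculus but the normalization bookkeeping. It is the hypothesis built into $A$ — that its last row is $(1,\ldots,1)$, so that $\mathrm{conv}(A)$ lies on a hyperplane $\{$last coordinate $=1\}$ — that makes the ad hoc Lagrange scalar $u_+/p_+$ cancel, collapsing $d$ inhomogeneous critical equations plus one trivial identity into a single clean rank-$\leq 1$ condition. Getting this alignment right, and confirming that passing from the affine torus chart to $\PP^n\times\PP^n$, and from ``zero set'' to ``ideal-theoretically defined by'', neither loses nor gains a component — which is safe here because the dense torus is smooth and $Ap\neq 0$ throughout the open set — is where the real attention is required.
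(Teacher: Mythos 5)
Your route is genuinely different from the paper's: the paper disposes of this proposition in one line, by citing Birch's Theorem \cite[Theorem 1.10]{PS}, whereas you re-derive the critical equations from scratch by pulling back $\log\ell_u$ along the monomial parametrization $p_i=c_i x^{\tilde a_i}$ and differentiating with respect to $\log x_k$. That computation is correct and is a nice self-contained substitute for the citation: it gives $\tilde A u=(u_+/p_+)\,\tilde A p$, hence, using the all-ones last row of $A$, the proportionality of the vectors $Au$ and $Ap$; your handling of the closure over the open set and of the converse direction (reading off the scalar from the last coordinate) is also fine, so in effect you reprove the relevant case of Birch's Theorem rather than invoke it.

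The one step you wave through is exactly where your argument and the displayed statement part ways. The condition you derived is the vanishing of the $2\times 2$ minors of the matrix with rows $p\cdot A^T$ and $u\cdot A^T$; it is \emph{not} the vanishing of the minors of the matrix (\ref{eq:povercanduoverc}), whose rows are $(p/c)\cdot A^T$ and $(u/c)\cdot A^T$. Dividing coordinatewise by $c$ before applying $A$ changes the linear combinations, so the two rank conditions differ whenever $c$ is not constant: for the Hardy--Weinberg curve with $c=(1,2,1)$ and $u=(2,1,0)$, the MLE (\ref{eq:hardyweinbergMLE}) satisfies $(p_1+2p_2)(2u_0+u_1)=(2p_0+p_1)(u_1+2u_2)$, which is the determinant appearing in (\ref{eq:detdet0}) and is the $p\cdot A^T$, $u\cdot A^T$ condition, but it does not satisfy the corresponding equation built from $p/c$ and $u/c$. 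So your sentence ``spelled out in the coordinates $p/c$ \dots this is precisely the vanishing of all $2\times 2$ minors of the matrix displayed'' is asserted without justification and is false as written; the scaling by $c$ does not cancel, since $c$ enters only through the equations of $X_c$ and never through ${\rm dlog}(\ell_u)$. Your calculus is the correct mathematics (it agrees with Birch's Theorem and with the paper's own equation (\ref{eq:detdet0})), and the mismatch in fact points to a misprint in the displayed matrix of the proposition; but a proof must either establish the statement as displayed or explicitly identify why it cannot be, and that unproved identification is the gap in your write-up.
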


\begin{proof}
This is an immediate consequence of Birch's Theorem \cite[Theorem 1.10]{PS}.
\end{proof}

\begin{example} \rm
The  {\em Hardy-Weinberg curve} of Example \ref{ex:hardyweinberg}
is the subvariety $\,X_{c} = V(p_1^2 - 4 p_0 p_2)\,$
in the projective plane $\PP^2$. As a toric variety, this plane curve is given by
$$ A = \begin{pmatrix} 0 & 1 & 2 \\ 2  & 1  & 0 \end{pmatrix}  \quad
\hbox{and}  \quad \,
{ c} = (1,2,1). $$
The likelihood correspondence of $X_{ c}$ is the surface in $\PP^2 \times \PP^2$
given by
\begin{equation}
\label{eq:detdet0}
{\rm det}
\begin{pmatrix}
2 p_0 & p_1 \\
p_1 & 2 p_2 
\end{pmatrix} \,\, = \,\,
{\rm det}
\begin{pmatrix}
p_1 + 2 p_2 &  2 p_0 + p_1 \\
 u_1 + 2 u_2 
 & 2 u_0 + u_1 
\end{pmatrix} 
\,\, = \,\, 0.
\end{equation}
Note that the second determinant equals the determinant
of the $2 \times 2$-matrix (\ref{eq:povercanduoverc}) times $4$.
Saturating (\ref{eq:detdet0}) with respect to~$p_0+p_1+p_2$ 
reveals two further equations of degree $(1,1)$:
$$ 
2 (u_1+2u_2)p_0 =  (2u_0+u_1)p_1 \quad \hbox{and} \quad
  (u_1+2u_2)p_1 =	2 (2u_0+u_1)p_2 .
  $$
For fixed ${u}$, these equations have a unique solution in $\PP^2$, 
 given by the formula in (\ref{eq:hardyweinbergMLE}).
\hfill $ \diamondsuit $          \end{example}

\smallskip

Toric varieties are rational varieties that are parametrized by monomials.
We now examine those varieties that are parametrized by generic polynomials.
Understanding these is useful for statistics since
many widely used models for discrete data are given in the form
$$ f : \Theta \rightarrow \Delta_n , $$
where $\Theta$ is a $d$-dimensional polytope
and $f$ is a polynomial map.
The coordinates $f_0,f_1,\ldots,f_n$ are polynomial functions
in the parameters $\theta = (\theta_1,\ldots,\theta_d)$
satisfying $\,f_0 + f_1 + \cdots + f_n = 1$.
Such models include the mixture
models in Proposition \ref{prop:mix},
phylogenetic models,  Bayesian networks, hidden Markov models, 
and many others arising in computational biology~\cite{PS}.

The model specified by the polynomials $f_0, \ldots,f_n$ is the semialgebraic set
 $f(\Theta) \subset \Delta_n$. We study its Zariski closure 
$\,X = \overline{f(\Theta)}\,$ in $\PP^n$.
Finding its equations is hard and interesting.

\begin{theorem} \label{thm:genericmap}
Let $f_0,f_1,\ldots,f_n$ be polynomials of degrees $b_0,b_1,\ldots,b_n$
satisfying $\sum f_i = 1$.
The ML degree of the variety $X$ is at most the coefficient of $z^d$ in
the generating function
$$ \frac{(1-z)^d}{(1-zb_0)(1-zb_1) \cdots (1-zb_n)} .$$
Equality holds when the coefficients of $f_0,f_1,\ldots,f_n$ are generic
relative to $\sum f_i = 1$.
\end{theorem}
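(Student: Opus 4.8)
The plan is to identify, for generic coefficients, the open variety $X\setminus\mathcal{H}$ with the complement of a generic hypersurface arrangement in affine space, then apply Theorem~\ref{thm:eulerchar} and evaluate the resulting Euler characteristic by inclusion--exclusion.

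First I would set up the parametrization. Write $\psi\colon\mathbb{A}^d\to\PP^n$, $\theta\mapsto(f_0(\theta):\cdots:f_n(\theta))$, and put $U:=\mathbb{A}^d\setminus V(f_0 f_1\cdots f_n)$. Because $\sum f_i\equiv 1$, the composition $p_+\circ\psi$ is identically $1$, so $\psi(\mathbb{A}^d)$ avoids the hyperplane $\{p_+=0\}$; using this, one checks that for generic coefficients $\psi$ restricts to an isomorphism $U\xrightarrow{\ \sim\ }X\setminus\mathcal{H}$ (the coordinate hyperplanes of $\mathcal{H}$ pull back to the $D_i:=\{f_i=0\}$, while $\{p_+=0\}$ is disjoint from the image). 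In particular $X\setminus\mathcal{H}$ is then smooth, and Theorem~\ref{thm:eulerchar} gives ${\rm MLdegree}(X)=(-1)^d\chi(U)$. Everything is thus reduced to the Euler characteristic of the complement of a generic arrangement of $n+1$ hypersurfaces $D_0,\dots,D_n$ of degrees $b_0,\dots,b_n$ in $\mathbb{A}^d$.

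For that I would use inclusion--exclusion over the intersection poset: $\chi(U)=\sum_{S\subseteq\{0,\dots,n\}}(-1)^{|S|}\chi(D_S)$ with $D_S=\bigcap_{i\in S}D_i$, which for generic coefficients is a smooth affine complete intersection of multidegree $(b_i)_{i\in S}$, empty once $|S|>d$. Writing $D_S$ as its smooth projective closure of the same multidegree minus the (again smooth, same-multidegree) slice at infinity, and inserting the classical Chern-class formula for the Euler characteristic of a smooth complete intersection, one gets
\[
\chi(D_S)\;=\;\Bigl(\textstyle\prod_{i\in S}b_i\Bigr)\cdot\bigl[z^{\,d-|S|}\bigr]\frac{(1+z)^d}{\prod_{i\in S}(1+b_iz)} .
\]
Summing over $S$, pulling the coefficient extraction outside and recognizing a product,
\[
\chi(U)=[z^d](1+z)^d\!\!\sum_{S\subseteq\{0,\dots,n\}}\prod_{i\in S}\frac{-b_iz}{1+b_iz}
=[z^d](1+z)^d\prod_{i=0}^n\frac1{1+b_iz}=[z^d]\frac{(1+z)^d}{\prod_{i=0}^n(1+b_iz)},
\]
so that, after $z\mapsto -z$, $(-1)^d\chi(U)=[z^d]\frac{(1-z)^d}{\prod_{i=0}^n(1-b_iz)}$. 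This gives the equality for generic coefficients. For arbitrary coefficients the inequality would follow by a conservation-of-numbers argument: letting the coefficient vectors (subject to $\sum f_i=1$) range over an irreducible parameter space and the correspondences $\mathcal{L}_{X_t}$ vary in a family over it, the number of critical points of $\ell_u$ in $X_{\rm reg}\setminus\mathcal{H}$ for generic $u$ is upper semicontinuous --- under specialization critical points can only collide or escape into $\mathcal{H}\cup X_{\rm sing}$ --- so ${\rm MLdegree}(X_t)$ never exceeds its generic value (compare the discussion around Example~\ref{ex:cautionary} and \cite[Chapter~10]{Fulton}).

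The step I expect to be the genuine obstacle is the genericity input above: that $\psi$ restricts to an isomorphism onto a \emph{smooth} $X\setminus\mathcal{H}$. This is clear when $n$ is large relative to $d$, but in the intermediate range the parametrization develops a positive-dimensional double locus, $X\setminus\mathcal{H}$ becomes singular, and Theorem~\ref{thm:eulerchar} no longer applies on the nose; one must then compute $(-1)^d\chi$ of a singular very affine variety directly, which is exactly what the Chern--Schwartz--MacPherson technology of Section~\ref{Proofs} is designed for (and which also re-proves the identity of the middle step as a characteristic-class computation on $\PP^d$). Making the conservation-of-numbers step fully precise (flatness and properness of the family) is a secondary technical point, and the Euler-characteristic bookkeeping itself is routine once the complete-intersection formula is granted.
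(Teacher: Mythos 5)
Your reduction has a genuine gap at its central step: the claim that for generic coefficients the parametrization restricts to an isomorphism $U=\C^d\setminus V(f_0\cdots f_n)\xrightarrow{\ \sim\ }X\setminus\mathcal{H}$, so that Theorem~\ref{thm:eulerchar} applies to $X$. This fails throughout the range $d+1\le n\le 2d$, which is not a marginal case but includes the paper's own illustration of this very theorem: for $d=2$, $n=3$ and generic quadrics with $\sum f_i=1$, the image $X$ is a Steiner surface, the map has a one-dimensional double-point locus, $X\setminus\mathcal{H}$ is singular, and the paper records $\chi(X\setminus\mathcal{H})=38$ while ${\rm MLdegree}(X)=25$. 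This also shows that your proposed repair --- computing $(-1)^d\chi$ of the singular $X\setminus\mathcal{H}$ by the CSM machinery of Section~\ref{Proofs} --- computes the wrong invariant: for singular very affine varieties the signed Euler characteristic is expected to strictly exceed the ML degree (Conjecture~\ref{conj:eulerpositive}), and it does here ($38\neq 25$). The correct way through, and the one taken in \cite{CHKS} (which is all the paper cites for this theorem), is to stay in parameter space: for generic coefficients the divisor $D=V(f_0\cdots f_n)\cup H_\infty$ is simple normal crossing in $\PP^d$, the critical points of the pulled-back likelihood on $U$ are the zeros of the section $\sum_i u_i\,{\rm dlog}(f_i)$ of $\Omega^1_{\PP^d}(\log D)$, and for generic $u$ these zeros are simple and avoid $D$, so their number is $\int_{\PP^d}c_d\bigl(\Omega^1_{\PP^d}(\log D)\bigr)=[z^d]\,(1-z)^d/\prod_i(1-b_iz)$ --- the generating function appears directly, with no inclusion--exclusion. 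What then still has to be proved, and what your sketch omits entirely, is the bridge between this parametric count and ${\rm MLdegree}(X)$ as defined in Definition~\ref{def:MLD_and_LC}: for generic coefficients (with $n\ge d+1$) the map is birational onto $X$ but not injective, so one must show that for generic $u$ no critical point upstairs lies over the ramification or double-point locus and no critical point downstairs lies in $X_{\rm sing}$, so that the two critical sets correspond bijectively.

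Two further comments. Your Euler-characteristic bookkeeping itself is correct: the inclusion--exclusion over generic affine complete intersections does reproduce $[z^d]\,(1+z)^d/\prod_i(1+b_iz)$, and it agrees with the Chern-class evaluation above, so that part is a legitimate (if longer) alternative computation; just note that the genericity of each $D_S$, $S$ a proper subset, is compatible with the constraint $\sum f_i=1$, and that the full intersection is empty. Finally, the inequality for special coefficients is not as routine as ``critical points can only collide or escape'': you must also exclude components of the incidence variety over coefficient space that lie over special coefficient values yet dominate the $u$-space, since these would contribute extra isolated critical points for generic data; Example~\ref{ex:cautionary} shows how fibers of the likelihood fibration can degenerate in unexpected ways, so this step needs an actual argument rather than an appeal to conservation of numbers.
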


\begin{proof}
This is the content of \cite[Theorem 1]{CHKS}.
\end{proof}

\begin{example} \rm
We examine the case of quartic surfaces in $\PP^3$.
Let $d=2,n=3$, pick random affine quadrics
$f_1,f_2,f_3$ in  two unknowns
and set $f_0  = 1-f_1-f_2-f_3$. This defines a map
$$ f \,:\,\mathbb{C}^2 \rightarrow  \mathbb{C}^3 \subset \mathbb{P}^3. $$
The ML degree of the image surface $X = \overline{f(\mathbb{C}^2)}$
in $\PP^3$ is equal to $25$ since
$$ \frac{(1-z)^2}{(1-2z)^4} \, = \, 1  + 6z + {\bf 25} z^2 + 88 z^3 + \cdots $$
The rational surface $X$ is a Steiner surface (or Roman surface).
Its singular locus consists of three lines that meet in a point $P$.
To understand the graph of  $f$, we observe that the linear span
of $\{f_0,f_1,f_2,f_3\}$ in $\mathbb{C}[x,y]$ has a basis $\{1,L^2,M^2,N^2\}$ where 
$L,M,N$ represent lines in $\mathbb{C}^2$. Let
$ l$ denote the line through $M \cap N $ parallel to $L$, 
 $m$ the line through $L \cap N$ parallel to  $M$, and 
  $n$ the 
line through $L \cap M$ parallel to $N$. 
The map $ \mathbb{C}^2 \rightarrow X$  is a bijection outside these 
three lines, and it maps each line 2-to-1 onto one of the lines in $X_{\rm sing}$.
The fiber over the special point $P$ on $X$ consists of 
three points, namely, $l \cap m$, $ l \cap n$ and $m \cap n$.
If the quadric $f_0$ were also picked at random, rather than 
as $1-f_1-f_2-f_3$, then we would still get a 
Steiner surface $X \subset \PP^3$. However,
 now the  ML degree of $X$ increases to $33$.

On the other hand, if we take $X$ to be a general 
quartic surface in $\PP^3$, so $X$ is a smooth K3 surface of Picard 
rank $1$, then $X$ has ML degree $84$. This is the formula in
Example \ref{ex:generichyper} evaluated at
$n=3$ and $d=4$. Here  $X \backslash \mathcal{H}$ 
is the generic quartic surface in $\PP^3$ with five plane sections removed.
The number $84$ is the Euler characteristic of that
open K3 surface.

In the first case, $X \backslash \mathcal{H}$ is singular,
so we cannot apply Theorem \ref{thm:eulerchar} directly
 to our Steiner surface $X$ in $\PP^3$.
However, we can work in the parameter space and
consider the smooth very affine surface $\mathbb{C}^2 \backslash V(f_0f_1f_2f_3)$.
The number $25$ is the Euler characteristic of that surface.

It is instructive to verify  Conjecture \ref{conj:eulerpositive} for our
three quartic surfaces in $\PP^3$. We found
$$
\begin{matrix}
\chi(X \backslash \mathcal{H}) &=& 38 && > && 25 &=& {\rm MLdegree}\,(X), \\
\chi(X \backslash \mathcal{H}) &=& 49 && > && 33 &=& {\rm MLdegree}\,(X), \\
\chi(X \backslash \mathcal{H}) &=& 84 && = && 84 &=& {\rm MLdegree}\,(X).
\end{matrix}
$$
The Euler characteristics of the three surfaces were computed using Aluffi's method 
 \cite{AluJSC}.
\hfill $\diamondsuit$
\end{example}

\smallskip

We now turn to the following question:~{\em which projective varieties $X$ have ML degree one?}
This question is important for likelihood inference
because a model having ML degree one means that the MLE $\hat p$ is a 
rational function in the data $u$.
It is known that Bayesian networks and decomposable graphical
models enjoy this property, and it is natural to wonder which
other statistical models are in this class.
The answer to this question was given by the first author in \cite{Huh2}.
We shall here present the result of \cite{Huh2} from a slightly different angle.

Our point of departure is the notion of the $A$-discriminant,
as introduced and studied by Gel'fand, Kapranov and Zelevinsky in \cite{GKZ}.
We fix an $r \times m$ integer matrix
$A = (a_1,a_2,\ldots,a_m)$  
of rank $r$  which   has $(1,1,\ldots,1)$ in its row space. The 
Zariski closure of 
$$ \bigl\{ ({t}^{a_1}: { t}^{a_2} : \cdots : 
{t}^{a_m} )\,\in \mathbb{P}^{m-1} \,\,: \,\, { t} \in (\mathbb{C}^*)^r \bigr\} $$
is an $(r-1)$-dimensional  toric variety $Y_A$ in $\mathbb{P}^{m-1}$. 
We here intentionally changed the notation relative to that used 
for toric varieties at the beginning of this section.
The reason is that $d$  and $n$ are always reserved
for the dimension and embedding dimension of a statistical model.

The {\em dual variety} $\,Y_A^*\,$   is  
an irreducible variety in the dual projective space
$(\mathbb{P}^{m-1})^\vee$ whose coordinates are ${ x} = (x_1:x_2: \cdots : x_m)$.
We identify points ${ x}$ 
in $(\mathbb{P}^{m-1})^\vee$ with  hypersurfaces
\begin{equation}
\label{eq:xhyper}
\bigl\{ \, { t} \in (\mathbb{C}^*)^r \,\,:\,\,
x_1 \cdot { t}^{a_1} + 
x_2 \cdot { t}^{a_2} + \cdots + 
x_m \cdot { t}^{a_m}  \,\, = \,\, 0 \, \bigr\}.
\end{equation}
The dual variety $Y_A^*$ is
the Zariski closure in $(\mathbb{P}^{m-1})^\vee$ of 
the locus of all hypersurfaces (\ref{eq:xhyper}) that are singular.
Typically, $Y_A^*$ is a hypersurface.
In that case, $Y_A^*$ is defined by a unique (up to sign) 
irreducible polynomial
${ \Delta_A} \in \mathbb{Z}[x_1,x_2,\ldots,x_m]$.
The homogeneous polynomial $\,{ \Delta_A} \,$ is called the  {\em $A$-discriminant}.
Many classical discriminants and resultants are instances  of $\Delta_A$.
So  are determinants and hyperdeterminants. This is the punch line of the book \cite{GKZ}.

\begin{example} \label{ex:disccubic} \rm
Let $ m = 4, r = 2$, and $A = \begin{pmatrix} 3 & 2 & 1 & 0 \\ 0 & 1 & 2 & 3 \end{pmatrix} $. 
The associated toric variety  is the twisted 
 cubic curve 
 \[
 Y_A=\overline{\bigl\{(1: t : t^2 :  t^3)\,|\, t \in \mathbb{C} \bigr\}}\, \subset \mathbb{P}^3.
 \] 
The variety $Y_A^*$  that is dual to the curve $Y_A$
is a surface in $(\mathbb{P}^3)^\vee$. The surface $Y_A^*$ parametrizes
all planes that are tangent to the curve $Y_A$.
These represent  univariate cubics
$$ x_1 + x_2  t   + x_3 t^2 + x_4 t^3
$$
 that have a double root.
Here the  $A$-discriminant is the classical discriminant
$$ \Delta_A \,= \,27 x_1^2 x_4^2
-18 x_1 x_2 x_3 x_4+4 x_1 x_3^3+4 x_2^3 x_4-x_2^2 x_3^2.
$$
The surface $Y_A^*$ in $\PP^3$ defined by this equation
is the discriminant of the univariate cubic.
\hfill $ \diamondsuit$
\end{example}

\begin{theorem} \label{thm:huh2}
Let $X \subseteq \PP^n$ be a projective variety of ML degree $1$.
Each coordinate
$\hat p_i$ of the rational function $\,u \mapsto \hat p\,$ is
an alternating product of linear forms in $u_0,u_1,\ldots,u_n$.
\end{theorem}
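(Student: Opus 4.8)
The plan is to derive from ${\rm MLdegree}(X)=1$ an explicit rational parametrization of $X$ by its own maximum likelihood estimator, and then to read the coordinates of that parametrization off the Horn uniformization of an $A$-discriminant. First, by Theorem~\ref{thm:finite-to-one} the likelihood correspondence $\mathcal{L}_X$ is irreducible of dimension $n$ and ${\rm pr}_2$ is generically finite of degree ${\rm MLdegree}(X)$; hence ${\rm MLdegree}(X)=1$ is equivalent to ${\rm pr}_2:\mathcal{L}_X\to\PP^n_u$ being birational. Its inverse composed with ${\rm pr}_1$ is a dominant rational map $\widehat p:\PP^n_u\dashrightarrow X\subseteq\PP^n_p$, and the task is exactly to show that each coordinate $\widehat p_i$ of $\widehat p$ is an alternating product (a Laurent monomial) of linear forms in $u_0,\dots,u_n$.

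Next I would rewrite the critical equations on the torus, exactly as in the proof of Proposition~\ref{prop:linLC}. A point $p$ with all coordinates nonzero is a critical point of $\ell_u$ on $X$ if and only if the vector ${\rm dlog}(\ell_u)(p)$ of \eqref{eq:nablalog} lies in the conormal space $(T_pX)^{\perp}$; since the Euler vector $p$ lies in $T_pX$ and $\mathbf{1}\cdot p=p_+\neq 0$, this is equivalent to
\[
u\;\in\; p\star\bigl((T_pX)^{\perp}+\C\cdot\mathbf{1}\bigr).
\]
Thus, over $X_{\rm reg}\backslash\mathcal{H}$, the correspondence $\mathcal{L}_X$ is the projectivization of the bundle whose fibre over $p$ is the Hadamard twist by $p$ of $(T_pX)^{\perp}\oplus\C\cdot\mathbf{1}$, the conormal space of $X$ at $p$ together with a trivial line, and ${\rm MLdegree}(X)=1$ says that the projection of this projective bundle to $\PP^n_u$ is birational. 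This is a strong ``order-one'' constraint relating $X$ to the geometry of its dual variety.

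The heart of the matter is the rigidity step, which is the result of \cite{Huh2} (re-derived here ``from a slightly different angle'' through the $A$-discriminant of \cite{GKZ}; see Example~\ref{ex:disccubic}). Birationality of ${\rm pr}_2$ forces $X$ to be obtained, after a toric coordinate rescaling $p_i\mapsto p_i/c_i$ as in Section~3, from the dual variety $Y_A^{*}$ of a projective toric variety $Y_A$; and it forces $\widehat p$ to coincide with the Horn (Kapranov) uniformization attached to $(A,c)$. That uniformization has the shape
\[
\widehat p_i(u)\;=\;\lambda_i\,\prod_{k} L_k(u)^{\,m_{ik}},\qquad m_{ik}\in\Z,
\]
where $L_1,\dots,L_s$ are fixed linear forms in $u_0,\dots,u_n$ (read off from a Gale dual of $A$), the $\lambda_i$ are nonzero scalars, and the exponent vectors are balanced so that $\sum_k m_{ik}$ is independent of $i$, making $\widehat p$ a well-defined map to $\PP^n$. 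Each $\widehat p_i$ is then visibly an alternating product of linear forms, which is the assertion of the theorem.

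The one substantial step is the rigidity in the previous paragraph: recognizing the $A$-discriminant/Horn structure inside the bare hypothesis ``exactly one critical point for generic $u$''. I would carry it out by combining (i) the bundle description above, which relates ${\rm pr}_2$ to the conormal variety of $X$ twisted by Hadamard multiplication, (ii) the Gel'fand--Kapranov--Zelevinsky theory, which identifies ``order-one'' families of this type with the data of a toric variety and its discriminant, and (iii) Kapranov's Horn parametrization of $Y_A^{*}$, which supplies the explicit monomial-in-linear-forms formula. Care is needed with the indeterminacy locus of $\widehat p$, with the rescaling $c$, and with the degenerate case where $Y_A^{*}$ fails to be a hypersurface (so that $X$ is a proper image of the Horn map rather than equal to $Y_A^{*}$). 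Everything before and after this step is routine bookkeeping with the likelihood equations, and the final implication is immediate.
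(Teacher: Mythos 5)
Your proposal follows essentially the same route as the paper: reduce ML degree one to birationality of ${\rm pr}_2$ via Theorem \ref{thm:finite-to-one}, then invoke the structure theorem of \cite{Huh2} that every such $X$ arises from a scaled $A$-discriminant (\ref{eq:scaleddisc}) via the Horn uniformization (\ref{eq:hornnn}), so that the coordinates of $u \mapsto \hat p$ are the alternating products of linear forms of Corollary \ref{cor:horn}. The paper likewise does not reprove that rigidity step here but defers it to \cite{Huh2} (with Kapranov's logarithmic Gauss map characterization in the background), exactly as you do, so your outline matches the paper's treatment.
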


The paper \cite{Huh2} gives an explicit construction
of the map $u \mapsto \hat p$ as a {\em Horn uniformization}. 
A precursor was \cite{Kapranov}.
We explain this construction. The point of departure is a 
matrix $A$ as above. We now take
$\Delta_A$ to be any non-zero homogenous polynomial
that vanishes on the dual variety $Y_A^*$
of the toric variety $Y_A$. If $Y_A^*$ is a hypersurface
then $\Delta_A$ is the $A$-discriminant.

First, we write $\Delta_A$
as a Laurent polynomial by dividing it by
one of its monomials:
\begin{equation}
\label{eq:scaleddisc}
 \frac{1}{\rm monomial} \cdot \Delta_A \,\,\, = \,\,\,
1 \,-\, c_0 \cdot { x}^{b_0} \,-\, c_1 \cdot { x}^{b_1}
\, -\, \cdots \,-\, c_n \cdot { x}^{b_n}. 
\end{equation}
This expression defines an $m \times (n+1)$  integer matrix $B=(b_0,\ldots,b_n)$
satisfying $AB = 0$.
Second, we define $X$ to be the rational subvariety of $\PP^n$ 
that is given parametrically by
\begin{equation}
\label{eq:hornnn}
 \frac{p_i}{p_0 + p_1 + \cdots + p_n} \,\,\,= \,\,\,c_i \cdot { x}^{b_i} \qquad
\hbox{for} \,\, \, i = 0,1,\ldots,n .
\end{equation}
The defining ideal of $X$ is obtained by eliminating $x_1,\ldots,x_m$
from the equations above. Then $X$ has ML degree $1$,
and, by \cite{Huh2},
every variety of ML degree $1$ arises in this manner.

\begin{example} \label{ex:disccubic2} \rm
The following curve in $\PP^3$ happens to be a variety of ML degree $1$:
$$ X \,\,= \,\, V \bigl(\, 9 p_1 p_2 - 8 p_0 p_3\,,\,
p_0^2-12 (p_0{+}p_1{+}p_2{+}p_3) p_3 \, \bigr). $$
This curve comes from the discriminant of the univariate cubic in
Example \ref{ex:disccubic}:
$$ \frac{1}{{\rm monomial}}\cdot \Delta_A \,\,= \,\,
1 -  \bigl(\frac{2}{3} \frac{ x_2 x_3} {x_1 x_4}\bigr)
- \bigl( -\frac{4}{27} \frac{x_2^3}{x_1^2 x_4} \bigr)
- \bigl( - \frac{4}{27} \frac{x_3^3}{x_1 x_4^2} \bigr)
- \bigl( \frac{1}{27} \frac{x_2^2 x_3^2}{x_1^2 x_4^2} \bigr) . $$
We derived the curve $X$ from the four parenthesized monomials 
via the formula (\ref{eq:hornnn}).
The maximum likelihood estimate for this model is given by the products of linear forms
$$
\hat p_0  = \frac{2}{3} \frac{ x_2 x_3} {x_1 x_4} \qquad
\hat p_1 =  -\frac{4}{27} \frac{x_2^3}{x_1^2 x_4}  \qquad
\hat p_2 =  - \frac{4}{27} \frac{x_3^3}{x_1 x_4^2}  \qquad
\hat p_3 =  \frac{1}{27} \frac{x_2^2 x_3^2}{x_1^2 x_4^2}  $$
where
$$
\begin{matrix}
  x_1  &=& -u_0 - u_1 - 2 u_2 - 2 u_3 && 
  x_2 &=& u_0 + 3 u_2 + 2 u_3 \\ 
   x_3 &=& u_0 + 3 u_1 + 2 u_3 &&
    x_4 &=&  -u_0 - 2 u_1 - u_2 - 2 u_3
    \end{matrix}
    $$
    These expressions  are 
   the alternating products of linear forms promised 
   in Theorem \ref{thm:huh2}. 
\hfill $\diamondsuit$
\end{example}

We now give the formula for $\hat p_i$
in general. This is the 
{\em Horn uniformization} of \cite[\S 9.3]{GKZ}.

\begin{corollary} \label{cor:horn}
Let $X \subset \PP^n$  be the variety of ML degree $1$ 
with parametrization (\ref{eq:hornnn})
derived from a scaled $A$-discriminant
(\ref{eq:scaleddisc}).
The coordinates of the MLE function $u \mapsto \hat p$ are
$$
\hat p_k \quad = \quad c_k \cdot \prod_{j=1}^m
(\sum_{i=0}^n b_{ij} u_i )^{b_{kj}}.
$$
\end{corollary}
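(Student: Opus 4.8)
The plan is to lean on Theorem~\ref{thm:huh2} (equivalently \cite{Huh2}): since the variety $X$ cut out by (\ref{eq:hornnn}) has ML degree $1$, the projection $\mathrm{pr}_2\colon\mathcal L_X\to\mathbb P^n_u$ is birational, so the MLE map $u\mapsto\hat p$ is its unique rational section. It therefore suffices to exhibit, for generic $u$, one point $\hat p\in X_{\mathrm{reg}}\setminus\mathcal H$ at which $\mathrm{dlog}(\ell_u)$ vanishes along $X$, and then read off its coordinates. I would take the candidate $\hat p=\phi(v)$, where $\phi(x)=(c_0x^{b_0}:\cdots:c_nx^{b_n})$ is the monomial map underlying (\ref{eq:hornnn}) and $v=(v_1,\dots,v_m)$ has entries the linear forms $v_j:=\sum_{i=0}^n b_{ij}u_i$; the $k$-th coordinate of $\phi(v)$ is exactly $c_k\prod_{j=1}^m v_j^{\,b_{kj}}$, the expression in the statement.

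Next I would pull the likelihood function back along $\phi$. Writing $g_0(x):=\sum_{i}c_ix^{b_i}$ for the reduced discriminant in (\ref{eq:scaleddisc}) and using $\sum_j b_{ij}=0$ for all $i$ --- which holds because $\mathbf 1$ lies in the row span of $A$ and $AB=0$ --- one gets $\ell_u\circ\phi=(\prod_i c_i^{u_i})\cdot x^{v}/g_0(x)^{u_+}$, with $x^v=\prod_j x_j^{v_j}$. Summing the $n+1$ defining equations $p_i/p_+=c_ix^{b_i}$ forces $g_0(x)=1$, so $X$ is the $\phi$-image of the hypersurface $\{g_0=1\}$, and the critical points of $\ell_u$ on $X$ are the $\phi$-images of the critical points of the monomial $x^v$ constrained to $\{g_0=1\}$. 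The corresponding Lagrange system is
\[
v_j\;=\;\mu\, x_j\,\frac{\partial g_0}{\partial x_j}(x)\;=\;\mu\sum_i c_i b_{ij}x^{b_i}\qquad(j=1,\dots,m),\qquad g_0(x)=1 .
\]
Taking the combination of these equations given by a row of $A$ yields $0=0$, since $AB=0$ makes both $\sum_j a_{lj}v_j$ and $\sum_j a_{lj}x_j\partial_{x_j}g_0$ vanish identically; hence the Lagrange locus is positive-dimensional, equal (by ML degree $1$) to a single fibre of $\phi$, and maps onto the unique critical point $\hat p\in\mathbb P^n$.

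The heart of the matter is to check that $x=v$ lies on this Lagrange locus. At $x=v$ one has $v_j/x_j=1$, so the Lagrange equations reduce to the proportionality $\nabla g_0(v)\parallel\mathbf 1_m$, while the constraint becomes the normalization identity $\sum_i c_iv^{b_i}=1$ (equivalently $[v]\in Y_A^{*}$). Both are instances of the classical Horn--Kapranov uniformization of the reduced $A$-discriminant \cite[\S 9.3]{GKZ}: the linear space $\mathrm{colspan}(B)=\ker A$, which contains every $v(u)$, coincides with the contact locus in $Y_A^{*}$ of the point $(1:\cdots:1)\in Y_A$, so along it the tangent hyperplane of $Y_A^{*}$ is constantly $\mathbf 1^{\perp}$; this gives $\nabla\Delta_A(v)\parallel\mathbf 1$, and combined with $\Delta_A(v)=0$ it yields both displayed facts. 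Concretely I would prove the normalization first --- for $r=1$, where $\Delta_A$ is linear, it is just $\sum_j b_{ij}=0$, and indeed $\hat p_k=u_k/u_+$ then; in general it is the statement that the Horn map lands on the discriminant --- and then differentiate $\Delta_A(v(u))\equiv 0$ in $u$, using $\partial_{u_k}v_j=b_{kj}$ together with $AB=0$ and the $A$-homogeneity of $\Delta_A$, to force $\nabla g_0(v)$ into the line $\mathbb C\mathbf 1_m$.

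Finally, once $x=v$ is a Lagrange critical point, $\phi(v)=\hat p$ is a critical point of $\ell_u$ on $X$; for generic $u$ all $v_j\neq 0$ and $g_0(v)=1\neq 0$, so $\hat p\in X_{\mathrm{reg}}\setminus\mathcal H$, and ML degree $1$ identifies it with the maximum likelihood estimate. Reading off coordinates gives $\hat p_k=c_k\prod_j v_j^{\,b_{kj}}$, as claimed. I expect the one genuinely substantive step to be the normalization/proportionality of the third paragraph --- that is, invoking (or re-deriving) the Horn--Kapranov uniformization and the identification of $\ker A$ with the contact locus of $(1:\cdots:1)$ in $Y_A^{*}$ --- while the reduction to the Lagrange system, the dimension count, and the bookkeeping with $AB=0$ are routine.
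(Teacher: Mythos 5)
Your argument is sound in outline, but note that the paper does not actually prove this corollary: it is presented as a restatement of the Horn uniformization of \cite[\S 9.3]{GKZ}, with the proof delegated to \cite{Huh2} (see also \cite{Kapranov}). So what you have written is not a variant of the paper's proof but a self-contained reconstruction of the cited argument, and the skeleton is exactly right. Since the ML degree is $1$, the map ${\rm pr}_2$ is birational and it suffices to exhibit one critical point for generic $u$; your candidate $x=v$ with $v_j=\sum_i b_{ij}u_i$ works because $v=Bu$ lies in $\ker A$, which is the contact locus of the point $\mathbf{1}\in Y_A$. The two identities you isolate are precisely the Horn--Kapranov picture: $g_0(v)=1$ follows from $\Delta_A(v)=0$, which in turn follows from the elementary observation that $t=\mathbf{1}$ is a singular point of $\{\sum_j v_j t^{a_j}=0\}$ whenever $Av=0$; and $\nabla g_0(v)\parallel\mathbf{1}$ is biduality, since the Gauss map of the hypersurface $Y_A^*$ sends a generic point of the contact locus of $\mathbf{1}$ back to $\mathbf{1}$. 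As a bonus, the first identity is exactly the paper's remark that $\sum_k \hat p_k =1$ is ``not obvious but true.''

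Two places deserve tightening. First, your fallback derivation of the proportionality by differentiating $\Delta_A(Bu)\equiv 0$ in $u$ only yields $\nabla\Delta_A(v)\perp{\rm colspan}(B)\subseteq\ker A$, that is, $\nabla\Delta_A(v)\in{\rm rowspace}(A)$, an $r$-dimensional space; the quasi-homogeneity of $\Delta_A$ adds the single relation $A\bigl(v\star\nabla\Delta_A(v)\bigr)=0$, and to cut the possibilities down to $\mathbb{C}\mathbf{1}$ one still needs a nondegeneracy input (e.g.\ that $A\,{\rm diag}(v)\,A^T$ has corank exactly one for generic $v$ in the contact locus), or else one should simply invoke biduality as in your primary argument. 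Second, ``all $v_j\neq 0$ and $g_0(v)=1$'' places $\hat p=\phi(v)$ in $X\setminus\mathcal{H}$ but not automatically in $X_{\rm reg}$; to conclude that the exhibited critical point is the one counted by the ML degree you should add that $u\mapsto\phi(Bu)$ is dominant onto $X$, so that for generic $u$ the point avoids $X_{\rm sing}$. Neither issue affects the correctness of the formula, and your computation that the $k$-th coordinate of $\phi(v)$ equals $c_k\prod_j(\sum_i b_{ij}u_i)^{b_{kj}}$ is exactly the claimed expression.
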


It is not obvious (but true)
 that $\,\hat p_0 + \hat p_1 + \cdots + \hat p_n = 1 \,$
holds in the formula above. In light of its monomial
parametrization, our variety $X$ is 
 toric in $\PP^n \backslash \mathcal{H}$.
In general, it is not toric in $\PP^n$, due to appearances
of the factor $(p_0+p_1+\cdots+p_n)$ in equations for $X$. 
Interestingly, there
are numerous instances when this factor does not appear
and $X$ is toric also in $\PP^n$.

One toric instance is the independence model 
$\,X = V(p_{00} p_{11} - p_{01} p_{10})$,
whose MLE was derived in
Example \ref{ex:biologist}. What is the matrix $A$ in this case?
We shall answer this question for a slightly larger example,
which serves as an illustration for
 {\em decomposable graphical models}.

\begin{example} \rm
Consider the conditional independence model 
  for three binary variables given by the graph
\ { $\bullet$-----$\bullet$-----$\bullet$}.
We claim that this graphical model is derived from
$$ A \,\,\,=\,\, \bordermatrix{
        & a_{00} & a_{10} & a_{01} & a_{11} & b_{00} & b_{01} & b_{10} & b_{11} & c_0 & c_1 & d \cr
        &   1      &    1       &     1       &    1       &    1       &    1       &    1       &    1       &    1       &    1   & 1 \cr
   x  \! &   1      &    1       &     0       &    0       &    0       &    0       &    0       &    0       &    1       &    0   & 0 \cr
  y   \! &   0      &    0       &     1       &    1       &    0       &    0       &    0       &    0       &    0       &    1   & 0 \cr
  z   \! &   0     &    0       &     0       &    0       &    1       &    1       &    0       &    0       &    1      &    0   & 0 \cr
  w  \! &   0     &    0       &     0       &    0       &    0       &    0       &    1       &    1       &    0       &    1   & 0 \cr}.
$$
The discriminant of the corresponding family of hypersurfaces
$$ 
\bigl\{ (x,y,z,w) \in (\mathbb{C}^*)^4 \,| \,
(a_{00}+a_{10}) x + (a_{01}+a_{11}) y +
(b_{00}+b_{01}) z + (b_{10}+b_{11}) w + 
c_0 x z + c_1 yw + d = 0 \bigr\} $$
equals
$$ \begin{matrix}
\Delta_A & =  & 
c_0 c_1 d - 
a_{01} b_{10} c_0 - a_{11} b_{10} c_0 -a_{01} b_{11} c_0
-a_{11} b_{11} c_0 \\ & & - a_{00} b_{00} c_1 - a_{10} b_{00} c_1
- a_{00} b_{01} c_1 - a_{10} b_{01} c_1.
\end{matrix}
$$
We divide this $A$-discriminant by its first term $\,c_0 c_1 d\,$
to rewrite it in the form (\ref{eq:scaleddisc}) with $n= 7$.
The parametrization of $X \subset \PP^7$
given by (\ref{eq:hornnn})  can be expressed as
\begin{equation}
\label{eq:twochain}
 \quad p_{ijk} \, = \, \frac{a_{ij} \cdot b_{jk}}{c_j \cdot d} \quad
\qquad \hbox{for} \,\,i,j,k \in \{0,1\}. 
\end{equation}
This is indeed the desired graphical model
$\, \hbox{$\bullet$-----$\bullet$-----$\bullet$} \,$
with implicit representation 
$$
X \,\, = \,\, V \bigl(
p_{000} p_{101} -p_{001} p_{100}\,,\,
p_{010} p_{111} -p_{011} p_{110} \bigr) \,\,\, \subset \,\,\, \PP^7.
$$
The linear forms used in the Horn uniformization of Corollary \ref{cor:horn} are
$$
a_{ij} = u_{ij+} \qquad
b_{jk} = u_{+jk} \qquad
c_j = u_{+j+} \qquad
d = u_{+++}
$$
Substituting these expressions into (\ref{eq:twochain}), we obtain
$$ \qquad \hat p_{ijk} \,\, = \,\,
\frac{u_{ij+}\cdot u_{+jk}}{u_{+j+}\cdot u_{+++}} 
\qquad \hbox{for} \,\,i,j,k \in \{0,1\}. $$
This is the formula in Lauritzen's book \cite{Lau}
for MLE of decomposable graphical models.
\hfill $\diamondsuit$
\end{example}

We now return to the likelihood geometry of an arbitrary
$d$-dimensional projective variety $X$ in $\PP^n$,
as always defined over $\mathbb{R}$ and not contained in $\mathcal{H}$.
We define the  {\em ML bidegree} of $X$ to be the bidegree
of its likelihood correspondence $\mathcal{L}_X  \subset \PP^n \times \PP^n$.
This is a binary form
$$ B_X(p,u) \quad = \quad
(b_0 \cdot p^d + b_1 \cdot p^{d-1}u+ \cdots + b_d \cdot u^d) \cdot p^{n-d},
$$
where $b_0,b_1,\ldots,b_d$ are certain positive integers.
By definition, $B_X(p,u)$ is the multidegree \cite[\S 8.5]{MS}
of the prime ideal of $\mathcal{L}_X$, with respect to the
natural $\mathbb{Z}^2$-grading on 
the polynomial ring $\R [p,u] = \R[p_0,\ldots,p_n, u_0,\ldots, u_n]$.
Equivalently, the ML bidegree $B_X(p,u)$ is the class defined by
$\mathcal{L}_X$ in the cohomology ring
\[
H^*(\mathbb{P}^n \times \mathbb{P}^n;\mathbb{Z}) =                           
\mathbb{Z}[p,u]/\langle p^{n+1}, u^{n+1} \rangle.
\]
We already saw some examples,  for the Grassmannian $G(2,4)$ in 
(\ref{eq:GrassmannBidegree}),
for arbitrary linear spaces in (\ref{eq:linearBidegree}), and for
a toric model of ML degree $1$ in (\ref{eq:BSpreview}).
We note that the bidegree $B_X(p,u)$ can be computed
conveniently using the command
{\tt multidegree} in {\tt Macaulay2}.

To understand the geometric meaning of the ML bidegree,
we introduce a second polynomial.
Let $L_{n-i}$ be a sufficiently general linear subspace of $\mathbb{P}^n$ of codimension $i$, and define
\[
s_i\,=\,\text{MLdegree}\,(X \cap L_{n-i}).
\]
We define the \emph{sectional ML degree} of $X$ to be the polynomial
$$ S_X(p,u)  \quad = \quad
(s_0 \cdot p^d + s_1 \cdot p^{d-1}u+ \cdots + s_d \cdot u^d) \cdot p^{n-d},$$

\begin{example} \rm
The sectional ML degree of the Grassmannian 
$G(2,4 )$ in (\ref{eq:pluckerquadric})  equals
$$ S_X(p,u) \,\,=\,\,4  p^5+20 p^4u+24 p^3u^2+12 p^2u^3+2 pu^4. $$
Thus, if $H_1,H_2,H_3$ denote generic hyperplanes in $\PP^5$,
then the threefold $G(2,4) \cap H_1$ has ML degree $20$,
the surface $G(2,4) \cap H_1 \cap H_2$ has ML degree $24$,
and the curve $G(2,4) \cap H_1 \cap H_2 \cap H_3$
has ML degree $12$. Lastly, the coefficient $2$ of $pu^4$ is simply the degree of $G(2,4)$ in $\PP^5$.
\hfill $\diamondsuit$
\end{example}

\begin{conjecture}
\label{conj:aluffi}
The ML bidegree and the sectional ML degree
of any projective variety $X \subset \PP^n$, not lying in $\mathcal{H}$, are related
by the following involution on binary forms of degree~$n$:
\begin{eqnarray*}
B_X(p,u) &=& \frac{u \cdot S_X(p,u-p)-p \cdot S_X(p,0)}{u-p},\\
S_X(p,u) &=& \frac{u \cdot B_X(p,u+p)+p \cdot B_X(p,0)}{u+p}.
\end{eqnarray*}
\end{conjecture}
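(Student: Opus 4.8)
The plan is to realize both $B_X$ and $S_X$ as specializations of a single bigraded invariant attached to the likelihood correspondence, and then to see the asserted involution as a change of variables in a Hilbert-series-type identity. Recall that $B_X(p,u)$ is the multidegree of $\mathcal{L}_X \subset \PP^n_p \times \PP^n_u$, and $\dim \mathcal{L}_X = n$ by Theorem~\ref{thm:finite-to-one}, so both $B_X$ and $S_X$ are binary forms of degree $n$ (after extracting the common factor $p^{n-d}$). The first step is to give a direct geometric interpretation of the coefficient of $p^{n-j}u^{j}$ in $B_X$: by the standard description of multidegree as an intersection number \cite[\S 8.5]{MS}, $b_{d-k}$ counts points of $\mathcal{L}_X \cap (L' \times L'')$ where $L' \subset \PP^n_p$ is a generic linear space of the appropriate codimension and $L'' \subset \PP^n_u$ likewise. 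The key observation is that cutting the $u$-factor with a generic hyperplane corresponds, via $\mathrm{pr}_2$, to imposing one generic linear condition on the data, whereas cutting the $p$-factor with a generic hyperplane corresponds to replacing $X$ by a generic hyperplane section $X \cap L_{n-1}$ — this is exactly the operation defining the sectional ML degrees $s_i$.

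The second step is to make this precise as an identity relating the two generating polynomials. Fix generic hyperplanes $H_1, H_2, \ldots$ in $\PP^n_p$ and generic hyperplanes $K_1, K_2, \ldots$ in $\PP^n_u$. Counting points of $\mathcal{L}_X$ meeting $i$ of the $H$'s and $j$ of the $K$'s gives a coefficient of $B_X$; on the other hand, by Theorem~\ref{thm:finite-to-one} the map $\mathrm{pr}_1$ restricted to $\mathcal{L}_X$ over $X_{\mathrm{reg}}\setminus\mathcal{H}$ is a projective bundle with fiber dimension $n-d$ over $X$, which means that imposing the $K$-conditions after fixing a point of $X$ cuts out a linear subspace of the fiber. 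One then computes that the ML degree of $X \cap L_{n-i}$, which already "uses up" $i$ hyperplanes on the $p$-side, equals a specific nonnegative combination $\sum_{k} \binom{?}{?} b_{k}$ of the ML-bidegree coefficients — essentially because each sectional ML degree $s_i$ counts critical points of $\ell_u$ on a linear slice, and critical points on the slice are governed by the restriction of $\mathcal{L}_X$. Translating the binomial identity into generating-function form is precisely the substitution $u \mapsto u - p$ (respectively $u \mapsto u+p$) together with the correction terms $p \cdot S_X(p,0)$ and $p \cdot B_X(p,0)$ that account for the "top" coefficient (the degree of $X$) which behaves specially under slicing.

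Concretely, I would argue as follows. Write $B_X(p,u)/p^{n-d} = \sum_{k=0}^d b_k p^{d-k} u^k$ and similarly for $S_X$. The claim $S_X(p,u) = \bigl(u\,B_X(p,u+p) + p\,B_X(p,0)\bigr)/(u+p)$ is equivalent, after clearing $p^{n-d}$ and setting $t = u/p$, to a polynomial identity $\sum s_k t^k = \bigl((1+t)\sum b_k (1+t)^k - \sum b_k\bigr)/t \cdot (\text{normalization})$; expanding $(1+t)^{k+1}$ by the binomial theorem yields $s_k = \sum_{j \geq k} b_j \binom{j+1}{k+1}$, and it is this binomial relation between sectional ML degrees and ML-bidegree coefficients that I must establish geometrically. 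The identity is then self-inverse because the two displayed formulas are Möbius-type inverses of each other (the substitutions $u \mapsto u\pm p$ are mutually inverse), so proving either direction suffices.

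The main obstacle I anticipate is justifying the binomial relation $s_k = \sum_{j \geq k} b_j \binom{j+1}{k+1}$ rigorously, i.e.\ controlling what happens to the likelihood correspondence under generic hyperplane sections of $X$. The subtlety is that $\mathcal{L}_{X \cap L}$ need not equal $\mathcal{L}_X \cap (L \times \PP^n_u)$ scheme-theoretically — there can be excess components supported over $\mathcal{H}$ or over the singular locus, exactly the phenomena flagged in Example~\ref{ex:cautionary}. Resolving this requires showing that for \emph{generic} $L$ these pathologies do not contribute to the relevant intersection numbers, presumably via a Bertini-type transversality argument for the incidence variety together with the irreducibility statement in Theorem~\ref{thm:finite-to-one}. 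I expect one also needs the projective-bundle structure of $\mathrm{pr}_1$ to compare the fibers of $\mathcal{L}_X$ and $\mathcal{L}_{X\cap L}$ over a common point of $X \cap L$, and a dimension count to rule out the excess locus. Once transversality is in hand, the combinatorial bookkeeping converting the intersection numbers into the stated generating-function involution is routine.
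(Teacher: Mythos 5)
Your proposal does not close the statement, and it is worth being clear that the paper itself does not either: this is stated as a conjecture, and the only cases established in the paper are those where $X \backslash \mathcal{H}$ is smooth and sch\"on, where the identity follows by identifying $B_X(p,u)$ with the Chern--Schwartz--MacPherson class of $\mathbf{1}_{X\backslash\mathcal{H}}$ and $S_X(p,u)$ with the sectional Euler characteristic (Theorem \ref{proCSM2}, via \cite[Theorem 2]{Huh1}) and then invoking Aluffi's involution \cite[Theorem 1.1]{Alu}; this covers generic complete intersections and linear spaces, but not general singular $X$. The gap in your argument is exactly the step you flag and postpone: you need that a generic hyperplane slice on the $p$-side computes $\mathcal{L}_{X\cap L}$, i.e.\ that $\mathcal{L}_X \cap (L\times \PP^n_u)$ and $\mathcal{L}_{X\cap L}$ have the same contribution to the relevant intersection numbers. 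For singular $X$ this is not a routine Bertini argument: the sectional ML degree counts critical points only on $(X\cap L)_{\rm reg}\backslash\mathcal{H}$, a generic $L$ still meets $X_{\rm sing}$ whenever $\dim X_{\rm sing}\ge 1$, and the closure $\mathcal{L}_X$ can carry components or embedded behavior over $X_{\rm sing}$ and over $\mathcal{H}$ (cf.\ Example \ref{ex:cautionary}) whose effect on the slice is precisely what is not understood. The projective-bundle structure of ${\rm pr}_1$ in Theorem \ref{thm:finite-to-one} only lives over $X_{\rm reg}\backslash\mathcal{H}$, so it cannot by itself rule out excess contributions from the boundary; supplying that control in general is the open content of the conjecture, not a deferred technicality.

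Separately, the combinatorial bookkeeping you call routine is not what the conjectured involution asserts. Writing $B_X(p,u)/p^{n-d}=\sum_k b_k p^{d-k}u^k$ and similarly for $S_X$, the identity $S_X(p,u)=\bigl(u\,B_X(p,u+p)+p\,B_X(p,0)\bigr)/(u+p)$ is equivalent to $s_0=b_0$ together with the recursion
\begin{equation*}
s_k+s_{k-1}\;=\;\sum_{j} b_j\binom{j}{k-1}, \qquad k\ge 1,
\end{equation*}
whose closed form for $s_k$ in terms of the $b_j$ has alternating signs; it is not the positive formula $s_k=\sum_{j\ge k}b_j\binom{j+1}{k+1}$ you propose to prove geometrically. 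For instance, for the Grassmannian $G(2,4)$ of Example \ref{ex:Grassmannian} one has $(b_0,\ldots,b_4)=(4,6,6,6,2)$ and $(s_0,\ldots,s_4)=(4,20,24,12,2)$, while your formula would give $s_1=6+18+36+20=80$. So even granting transversality, the identity you would establish is not the one required. If you want a workable route under smoothness hypotheses, the efficient path is the one the paper takes: prove a log-adjunction statement for the class of $\mathcal{L}_X$ versus $\mathcal{L}_{X\cap H}$ (or pass through CSM classes directly) and then apply Aluffi's generating-function involution, rather than re-deriving the coefficient identities by counting points on slices.
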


This conjecture is a theorem when $X \backslash \mathcal{H}$
is smooth and its boundary is {\em sch\"on}. See Theorem \ref{proCSM2} below.
In that case, the ML bidegree is identified,
by \cite[Theorem 2]{Huh1}, with the
Chern-Schwartz-MacPherson (CSM) class of the
constructible function on $\PP^n$ that is $1$ on
$X \backslash \mathcal{H}$ and $0$ elsewhere.
Aluffi proved in \cite[Theorem 1.1]{Alu}
that the CSM class of an locally closed subset of $\PP^n$
satisfies such a  {\em log-adjunction formula}.
Our formula in Conjecture \ref{conj:aluffi} is precisely the homogenization
of Aluffi's involution.
The combination of  \cite[Theorem 1.1]{Alu} and
\cite[Theorem 2]{Huh1} proves Conjecture \ref{conj:aluffi} in
cases such as generic complete intersections (Theorem \ref{thm:genericMLdegree})
and arbitrary linear spaces (Theorem \ref{LinearCase}).
In the latter case, it can also be verified using matroid theory.
 Conjecture \ref{conj:aluffi} says that this holds for any $X$,
indicating a deeper connection between likelihood correspondences and CSM classes.

We note that  $B_X(p,u)$ and $S_X(p,u)$ always share
the same leading term and the same trailing term,
and this is compatible with our formulas. Both polynomials
start and end like
$$ {\rm MLdegree}\,(X) \cdot p^n \,+ \,\cdots \,+ {\rm degree}\,(X) \cdot p^{{\rm codim}(X)} u^{{\rm dim}(X)}.
$$
We now illustrate Conjecture \ref{conj:aluffi} by
verifying it  computationally for a few more examples.

\begin{example} \rm
Let us examine some cubic fourfolds in $\PP^5$.
If $X$ is a generic hypersurface of degree $4$ in $\PP^5$
then its sectional ML degree and ML bidegree satisfy the conjectured formula:
$$ \begin{matrix}
S_X(p,u) &=&
 1364 p^5  \,+\,  448 p^4 u \, +\,  136 p^3 u^2\,+\, 32 p^2 u^3 \,+\, 3 p u^4 ,\\
 B_X(p,u) & = &
 1364 p^5  \,+\,  341 p^4 u \, +\,  81 p^3 u^2\,+\, 23 p^2 u^3 \,+\, 3 p u^4 .
 \end{matrix}
 $$
Of course, in algebraic statistics, we are more interested in 
special hypersurfaces that are statistically meaningful.
One such instance was seen in  Example \ref{ex:galois33}.
The mixture model for two identically distributed 
ternary random variables is the fourfold $X \subset \PP^5$ defined by
\begin{equation}
\label{eq:3x3symmetric}
{\rm det} \begin{pmatrix}                                                 
2p_{11} & p_{12} & p_{13}  \\
p_{12} & 2 p_{22} & p_{23} \\
p_{13} & p_{23} & 2 p_{33} 
\end{pmatrix} \quad = \quad 0 .
\end{equation}
The sectional ML degree and the ML bidegree of this determinantal fourfold are
$$ \begin{matrix}
S_X(p,u) &=& 6 p^5 \,+\,  42p^4 u \,+\,   48 p^3 u^2 \,+\,  21 p^2 u^3  \,+\, 3 p u^4  \\
 B_X(p,u) & = &
6 p^5  \,+\,   12 p^4 u \, +\,  15 p^3 u^2\,+\,  12 p^2 u^3 \,+\, 3 p u^4 .
 \end{matrix}
 $$
 For  the toric fourfold $ X = V(p_{11}p_{22}p_{33}\,{ -}\,p_{12}p_{13}p_{23})$,
ML bidegree and sectional ML degree~are 
$$
\begin{matrix}
B_X(p,u) &=&{\bf 3} p^5  \,+\,  3 p^4 u \, +\,  3 p^3 u^2\,+\,3 p^2 u^3 \,+\, 3 p u^4, \\
S_X(p,u) &=& {\bf 3} p^5  + 12 p^4 u  +  18 p^3 u^2+12 p^2 u^3 + 3 p u^4. 
\end{matrix}
$$
Now, taking $ X = V(p_{11}p_{22}p_{33} { +} p_{12}p_{13}p_{23})$ instead,
the leading coefficient $3$ changes to $2$. 
\hfill $\diamondsuit$
\end{example}

 \begin{remark}  \rm
  Conjecture \ref{conj:aluffi} is true when $X_{ c}$ is a toric variety with ${ c}$
 generic, as in Theorem \ref{thm:toricMLE}.
 Here we can use Proposition \ref{prop:toricLC} to infer
 that all coefficients of $B_X$ are equal to 
the normalized volume of the lattice polytope ${\rm conv}(A)$. In symbols, 
for generic ${ c}$, we have
$$  B_{X_{ c}}(p,u) \quad = \quad
   {\rm degree}\,(X_{ c}) \cdot \sum_{i=0}^d   p^{n-i} u^i . $$
It is now an exercise to transform this into a formula for the sectional
ML degree $ S_{X_{ c}}(p,u)$.
 \end{remark}
 
 In general, it is hard to compute generators for the
 ideal of the likelihood correspondence.
 
 \begin{example}\label{ex:secant} \rm
The following submodel of (\ref{eq:3x3symmetric})
was featured prominently in \cite[\S 1]{HKS}:
 \begin{equation}
\label{eq:3x3hankel}
{\rm det} \begin{pmatrix}                                                 
12 p_0 & 3 p_1 & 2 p_2  \\
3 p_1 & 2 p_2 & 3 p_3 \\
2 p_2  & 3 p_3 & 12 p_4 
\end{pmatrix} \quad = \quad 0 .
\end{equation}
This cubic threefold $X$ is the secant variety of a rational normal curve in $\PP^4$,
and it represents the mixture model for a binomial random variable
(tossing a biased coin four times). It takes several hours in {\tt Macaulay2}
to compute the prime ideal of the likelihood correspondence
$\mathcal{L}_X \subset \PP^4 \times \PP^4$. That ideal has
$20$ minimal generators
one in degree $(1,1)$,
one in degree $(3,0)$,
five in degree $(3,1)$,
ten in degree $(4,1)$ and
three in degree $(3,2)$.
After passing to a  Gr\"obner basis,
we use the formula in \cite[Definition 8.45]{MS}
to compute the
bidegree of $\mathcal{L}_X$:
$$ B_X(p,u) \quad = \quad 
12 p^4 + 15 p^3 u + 12 p^2 u^2 + 3 p u^3. 
$$
We now intersect $X$ with
 random hyperplanes in $\PP^4$,
and we compute the  ML degrees of the intersections.
Repeating this experiment many times
reveals the sectional ML degree of $X$:
$$ S_X(p,u) \quad = \quad 
12 p^4 + 30 p^3 u + 18 p^2 u^2 + 3 p u^3. 
$$
The two polynomials satisfy our
transformation rule, thus confirming Conjecture \ref{conj:aluffi}.
We note that Conjecture \ref{conj:eulerpositive} also holds for this example:
using Aluffi's method \cite{AluJSC}, we find $\chi( X \backslash \mathcal{H}) = -13$.
\hfill $\diamondsuit$
 \end{example}
 
 \smallskip
 
Our last topic is the operation of restriction and deletion.
This is a standard tool for  complements of hyperplane 
arrangements, as in Theorem \ref{LinearCase}. It was 
developed  in \cite{Huh1}  for arbitrary very affine varieties, 
such as $X \backslash \mathcal{H}$.
We motivate this by explaining the distinction between
{\em structural zeros} and {\em sampling zeros} for
contingency tables in statistics \cite[\S 5.1.1]{BFH}.

Returning to the ``hair loss due to TV soccer'' example from the beginning of Section 2,
let us consider the following questions. What is the difference between the data set
$$ U \quad \, = \,\,\,\,
\bordermatrix{
 &  \hbox{lots of hair} & \hbox{medium hair} & \hbox{little hair} \cr
\hbox{$\leq 2$ hrs} & 15 & 0 & 9 \cr
\hbox{$2$--$6$ hrs} &  20 & 24 & 12 \cr
\hbox{$\geq 6$ hrs} & 10 & 12 &  6 }
$$
and the data set
$$ \tilde U \quad \, = \,\,\,\,
\bordermatrix{
 &  \hbox{lots of hair} & \hbox{medium hair} & \hbox{little hair} \cr
\hbox{$\leq 2$ hrs} & 10 & 0 & 5 \cr
\hbox{$2$--$6$ hrs} &  9 & 3 & 6 \cr
\hbox{$\geq 6$ hrs} &  7 & 9 &  8} \ ?
$$
How should we think about the zero entries in row 1 and column 2
of these two contingency tables?
Would the rank 1 model $\mathcal{M}_1$ or the rank 2 model
$\mathcal{M}_2$ be more appropriate?

The first matrix $U$ has rank $2$ and it can be completed to a rank $1$ matrix
by replacing the zero entry with $18$. Thus, the model $\mathcal{M}_1$
fits perfectly except for  the  {\em structural zero}
in row 1 and column 2.  It seems that this zero is
inherent in the structure of the problem:  planet Earth
simply has no people with medium hair length who rarely watch
soccer on TV.

The second matrix $\tilde U$ also has rank two, but it cannot 
be completed to rank $1$. The model $\mathcal{M}_2$
is a perfect fit. The zero entry in $\tilde U$ appeared to be
an artifact of the particular group that was interviewed in this study.
This is a {\em sampling zero}. It arose because, by chance,
in this cohort nobody happened to have medium hair length and
watch soccer on TV rarely. We refer to
the book of Bishop, Feinberg and Holland \cite[Chapter 5]{BFH}
for an introduction.

We now consider an arbitrary projective variety $X \subseteq \PP^n$, 
serving as our statistical model. Suppose that
structural zeros or sampling zeros occur in the last coordinate $u_n$.
Following \cite[Theorem 4]{Rapallo}, we model
structural zeros by the projection $\pi_n(X) $. This model is
the variety in $\PP^{n-1}$ that is the closure of the image of $X$ under the rational map
\[
\pi_n:\PP^n \dashrightarrow \PP^{n-1}, \qquad (p_0:p_1:\cdots:p_{n-1}:p_n) \longmapsto (p_0:p_1:\cdots:p_{n-1}).
\]
Which projective variety is a good representation for sampling zeros?
We propose that sampling zeros be modeled
by the  intersection $X \cap \{p_n{=}0\}$. This is now
to be  regarded as a subvariety in $\PP^{n-1}$.  
In this manner, both structural zeros and sampling zeros
are  modeled by closed subvarieties of $\PP^{n-1}$.
Inside that ambient $\PP^{n-1}$, our
 standard arrangement $\mathcal{H}$ consists of $n+1$ hyperplanes.
Usually, none of these  hyperplanes contains $X \cap \{p_n{=} 0\}$ or $\pi_n(X)$.

It would be desirable to express the (sectional) ML degree of $X$  in terms 
of those of the intersection $X \cap \{p_n = 0\}$ and the projection $\pi_n(X)$. 
As an alternative to  the ML degree of the projection $\pi_n(X)$ into $\PP^{n-1}$,
here is a quantity in $\PP^n$ that reflects the presence of
structural zeros even more accurately.
We denote by
$$ {\rm MLdegree}\,(X|_{u_n = 0}) $$
the number of critical points
$\,\hat p = (\hat p_0: \hat p_1: \cdots :\hat p_{n-1}:\hat p_n)\,$
of $\,\ell_u\,$ in $\,X_{\rm reg} \backslash \mathcal{H}\,$
for those data vectors $\,u = (u_0,u_1,\ldots,u_{n-1},0)\,$
whose first $n$ coordinates $u_i$ are positive and generic.

\begin{conjecture} \label{conj:3.19}
The maximum likelihood degree
satisfies the inductive formula
\begin{equation}
\label{eq:MLDinductive}
{\rm MLdegree}\,(X) \,\, = \,\,
{\rm MLdegree}\,(X \cap \{p_n{=} 0\}) \,+\,
{\rm MLdegree}\,(X|_{u_n=0}) ,
\end{equation}
provided  $X$ and $X \cap \{p_n{=}0\}$ are reduced,
irreducible,
and not contained in their respective~$\mathcal{H}$.
\end{conjecture}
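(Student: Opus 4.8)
The plan is to recognize \eqref{eq:MLDinductive} as the ``deletion'' half of a deletion-restriction recursion. Write $\sH$ for the arrangement $\{p_0=0\}\cup\cdots\cup\{p_n=0\}\cup\{p_+=0\}$ and let $\sH^-$ be obtained from $\sH$ by deleting the single hyperplane $\{p_n=0\}$, so $\sH^-$ has $n+1$ elements. Set $Z=X\cap\{p_n=0\}$, viewed inside $\PP^{n-1}=\{p_n=0\}$ with its own distinguished arrangement $\sH'$ of $n+1$ hyperplanes. The point of departure is the stratification $X\backslash\sH^-=(X\backslash\sH)\sqcup(Z\backslash\sH')$, which by additivity of the topological Euler characteristic gives
\[
\chi(X\backslash\sH^-)\;=\;\chi(X\backslash\sH)\,+\,\chi(Z\backslash\sH').
\]
Provided $X\backslash\sH$ and $Z\backslash\sH'$ are smooth, Theorem~\ref{thm:eulerchar} identifies $(-1)^d\chi(X\backslash\sH)$ with ${\rm MLdegree}(X)$ and $(-1)^{d-1}\chi(Z\backslash\sH')$ with ${\rm MLdegree}(X\cap\{p_n=0\})$, so in this case \eqref{eq:MLDinductive} becomes \emph{equivalent} to the single identity
\[
{\rm MLdegree}(X|_{u_n=0})\;=\;(-1)^d\,\chi(X\backslash\sH^-),
\]
and it is this identity that I would aim to establish in general.

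For data $u=(u_0,\dots,u_{n-1},0)$ the likelihood function $\ell_u$ carries no power of $p_n$, so it is a master function on the complement $X\backslash\sH^-$ of the $n+1$ hyperplanes $\{p_0=0\},\dots,\{p_{n-1}=0\},\{p_+=0\}$, with exponent vector $(u_0,\dots,u_{n-1},-u_+)$, which is generic once $u_0,\dots,u_{n-1}$ are. The deletion-restriction analysis of critical points of master functions --- in the form of \cite{Orlik-Terao} for arrangements and of \cite[\S 3]{Huh1} and \cite{Denham-Garrousian-Schulze} for the general very affine setting --- should give both that no critical point of $\ell_u$ escapes onto the deleted stratum $Z$ (so every critical point lies in $X_{\rm reg}\backslash\sH$ and is counted by ${\rm MLdegree}(X|_{u_n=0})$) and that their number equals the signed Euler characteristic $(-1)^d\chi(X\backslash\sH^-)$. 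A second, more hands-on route, which I would run in parallel as a cross-check and as a fallback when $X$ is singular (so that Theorem~\ref{thm:eulerchar} is unavailable), works directly with the likelihood correspondence $\sL_X$. Restrict ${\rm pr}_2\colon\sL_X\to\PP^n_u$ over the hyperplane $\{u_n=0\}$; for generic $u$ there the scheme ${\rm pr}_2^{-1}(u)$ has length ${\rm MLdegree}(X)$ and splits into its part lying in $X_{\rm reg}\backslash\sH$, which by definition contributes ${\rm MLdegree}(X|_{u_n=0})$, and a part supported on $\{p_n=0\}$. Writing the critical equations ${\rm dlog}(\ell_u)|_{T_pX}=0$ in local coordinates near a point of $Z$ and letting $u_n,p_n\to 0$ with $p_n/u_n\to c$, the equations degenerate: the components tangent to $Z$ say exactly that the limit point is a critical point of the likelihood function of $Z$ for data $(u_0,\dots,u_{n-1})$, while the transverse component pins down $c$; conversely a deformation argument recovers an honest critical point of $\ell_u$ on $X_{\rm reg}$ for small $u_n$ from each critical point of $Z$. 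This identifies the $\{p_n=0\}$-part of the fiber with a scheme of length ${\rm MLdegree}(X\cap\{p_n=0\})$, again yielding \eqref{eq:MLDinductive}.

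The hard part is the control of critical points in the limit $u_n\to 0$. One must show, for generic $u_0,\dots,u_{n-1}$, that no critical point of $\ell_u$ specializes onto $\{p_n=0\}$ outside $Z_{\rm reg}$ (in particular none onto $Z_{\rm sing}$ or onto the other distinguished hyperplanes), that the limit points accumulate on $Z$ with total multiplicity exactly ${\rm MLdegree}(X\cap\{p_n=0\})$ and with no spurious contributions from components of $\sL_X$ lying over $\{u_n=0\}$, and --- since for singular $X$ the Euler-characteristic shortcut fails --- that the hypotheses that $X$ and $X\cap\{p_n=0\}$ be reduced and irreducible already rule out the jumping-fiber pathologies of Examples~\ref{ex:nodecusp} and~\ref{ex:cautionary}. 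A related technical subtlety is that $X\backslash\sH^-$ is not itself a very affine variety, because $p_n$ vanishes on $Z\subset X\backslash\sH^-$; so the master-function count must be justified on this partial compactification, which is where one would invoke the constructible-function and Chern-Schwartz-MacPherson formalism of \cite{Huh1} applied separately to the open stratum $X\backslash\sH$ and to its boundary divisor $Z\backslash\sH'$, or else supply a direct transversality argument for the equation cutting out $Z$ inside $X$ relative to the master function.
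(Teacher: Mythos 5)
The statement you are trying to prove is presented in the paper as a conjecture: the authors give no proof, only the heuristic that as $u$ degenerates to a general point of $\{u_n=0\}$ the fiber ${\rm pr}_2^{-1}(u)$ splits into one cluster staying away from $\sH$ (of size ${\rm MLdegree}(X|_{u_n=0})$) and one cluster moving onto $\{p_n=0\}$, together with the remark that an analogue under a smoothness assumption is \cite[Corollary 3.2]{Huh1}. Your proposal reproduces exactly this picture. Your one genuine addition, the reduction via $X\backslash\sH^-=(X\backslash\sH)\sqcup(Z\backslash\sH')$ and additivity of $\chi$, is correct as far as it goes: when both $X\backslash\sH$ and $Z\backslash\sH'$ are smooth, Theorem \ref{thm:eulerchar} turns (\ref{eq:MLDinductive}) into the single identity ${\rm MLdegree}(X|_{u_n=0})=(-1)^d\chi(X\backslash\sH^-)$, which is essentially the known result of \cite{Huh1} that the paper already cites. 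But this covers only the smooth case, which is much narrower than the hypotheses of Conjecture \ref{conj:3.19} (reduced, irreducible, not contained in $\sH$), and even there you do not re-derive the key identity: as you note yourself, $X\backslash\sH^-$ is not very affine, so neither Theorem \ref{thm:eulerchar} nor the CSM machinery of \cite{Huh1} applies to it directly, and you simply defer to the literature at the decisive step.

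The genuine gap is the general (singular) case, and it coincides precisely with the list you label ``the hard part'': that the limiting scheme on $\{p_n=0\}$ has length exactly ${\rm MLdegree}(X\cap\{p_n{=}0\})$, that no critical points escape to $Z_{\rm sing}$, to $X_{\rm sing}$, or to the other distinguished hyperplanes, and that no multiplicity is created or destroyed in the limit $u_n\to 0$. You assert these but give no argument, and the hypotheses of the conjecture do not by themselves exclude the relevant pathologies: Example \ref{ex:cautionary} exhibits a reduced irreducible curve for which the fiber of ${\rm pr}_2$ over a special data point is not algebraically equivalent to the general fiber, so the conservation-of-number step implicit in your local-coordinate degeneration (and in the claimed bijection between limit points and critical points of $\ell_u$ on $Z$) requires a real proof --- flatness or properness of the family over a neighborhood of the generic point of $\{u_n=0\}$, control of components of $\sL_X$ lying over $\{u_n=0\}$, and an argument that the scheme structure of the $\{p_n=0\}$ cluster matches the likelihood locus of $Z$. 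In short, your plan matches the paper's own intuition and settles nothing beyond the smooth situation already covered by \cite[Corollary 3.2]{Huh1}; the statement as it stands is an open conjecture in the paper, and the proposal does not close it.
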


We expect that an analogous formula will hold for the sectional 
ML degree $S_X(p,u)$.
The intuition behind equation (\ref{eq:MLDinductive}) is as 
follows. As the 
data vector  $u$ moves from a general point in $\PP^n_u$
to a general point on the hyperplane $\{u_n = 0\}$, the 
corresponding fiber
${\rm pr}_2^{-1}(u)$ of the likelihood fibration
splits into two clusters. One cluster has size
${\rm MLdegree}\,(X|_{u_n=0}) $ and  stays away from $\mathcal{H}$.
The other cluster moves onto the hyperplane $\{p_n = 0\}$
in $\PP^n_p$, where it approaches the various critical 
points of $\ell_u$  in that intersection. This degeneration 
is the perfect scenario for
a numerical homotopy, e.g.~in {\tt Bertini}, as discussed in 
Section 2. These homotopies are currently being studied for 
determinantal varieties  by
Elizabeth Gross and Jose Rodriguez \cite{GR}.
The formula (\ref{eq:MLDinductive}) has been verified 
computationally for many examples. Also,  Conjecture  \ref{conj:3.19}
is known to be true in the slightly different setting
of \cite{Huh1}, under a certain smoothness assumption.
This is the content of \cite[Corollary 3.2]{Huh1}.

\smallskip


\begin{example} \rm
Fix the space $\PP^8$ of $3 \times 3$-matrices
as in \S 2.
For the rank $2$ variety  $X = \mathcal{V}_2$,
the formula (\ref{eq:MLDinductive}) reads
$\,10=5+5$. For the rank $1$ variety $X = \mathcal{V}_1$,
it reads  $\,1 = 0+1$.
\hfill $\diamondsuit$
\end{example}

\begin{example} \rm
If $X$ is a
generic $(d,e)$-curve in $\PP^3$, then
\[
{\rm MLdegree}\,(X) = d^2e + de^2 + de \quad \text{and} \quad X \cap \{p_3 = 0\}=(\text{$d \cdot e$ distinct points}).
\]
Computations suggest that
\[
{\rm MLdegree}\,(X|_{u_3 = 0})  =   d^2e + de^2 \quad \text{and} \quad
{\rm MLdegree}\,(\pi_3(X))  =   d^2e + de^2 .  
\]
 To derive the second equality geometrically, one may argue as follows.
 Both curves $X \subset \PP^3$ and  $\pi_3(X) \subset \PP^2$
 have degree $de$ and genus $ \frac{1}{2}(d^2 e + d e^2) - 2 de + 1$.
 Subtracting this from the expected genus
 $\frac{1}{2}(de-1)(de-2)$ of a plane curve of degree $de$,
 we find that $\pi_3(X)$ has $ \frac{1}{2}d (d-1)e(e-1)$ nodes.
 Example \ref{ex:nodecusp} suggests that
each node decreases the ML degree of a plane curve by $2$.
Assuming this to bet the case, we conclude
$${\rm MLdegree}\,(\pi_3(X))\,\,=\,\, de(de+1) -   d(d-1)e(e-1)
\,\,=\,\,  d^2e + de^2.$$ 
Here we are using that a general plane curve
of degree $de$ has ML degree $de(de+1)$.
\hfill $\diamondsuit$
\end{example}

This example suggests that, in favorable circumstances, the following
identity would hold:
\begin{equation}
\label{eq:favorable}
\hbox{MLdegree}\,(X|_{u_n=0})  \, \, = \,\,
\hbox{MLdegree}\,(\pi_n(X)).
\end{equation}
However, this is certainly not true in general. Here is a particularly telling example:

\begin{example} \rm
Suppose that $X$ is a generic surface of degree $d$ in $\PP^3$. Then
$$\begin{matrix}
\hbox{MLdegree}\,(X) &=& d+d^2+d^3 ,\\
\hbox{MLdegree}\,(X \cap \{p_3 = 0\}) & = &  d+d^2 ,\\
\hbox{MLdegree}\,(X|_{u_3 = 0}) & = &  d^3 ,\\
 \hbox{MLdegree}\,(\pi_3(X)) & = & 1.
  \end{matrix}
 $$
 Indeed, for most hypersurfaces  $X \subset \PP^n$,
 the same will happen, since     $\pi_n(X) = \PP^{n-1}$.  \hfill $\diamondsuit$
\end{example}

As a next step, one might conjecture
that (\ref{eq:favorable}) holds
when the map is birational
and  the center  $(0:\cdots:0:1)$ of the projection does not
lie on the variety $X$.
But this also fails:

\begin{example} \rm
Let $X$ be the twisted cubic curve in $\PP^3$ defined
by the $2 \times 2$-minors of 
$$ \begin{pmatrix}
\,           p_0+p_1-p_2 &&  2 p_0-p_2+9p_3  && p_0-6 p_1+8 p_2 \, \\
  \,  2 p_0-p_2+9 p_3 && p_0-6 p_1+8 p_2 && 7 p_0+p_1+2 p_2 \,
    \end{pmatrix} . $$
The ML degree of $X$ is $13 = 3+10 $, and $X$ intersects $\{p_3=0\}$ in 
three distinct points.
 The projection of the curve $X$ into $\PP^2$ is a cuspidal cubic,
 as in Example \ref{ex:nodecusp}. We have
 \[
{\rm MLdegree}\,(X|_{u_3 = 0})  =  10   \quad \text{and} \quad
{\rm MLdegree}\,(\pi_3(X))  =  9.
 \]
 It is also instructive to compare the number 
 $\,13 = -\chi(X \backslash \mathcal{H})\,$ with the
 number $\,11\,$ one gets in Theorem  \ref{thm:genericmap}
 for the special twisted cubic curve with
  $d = 1$, $n=3$ and $b_0=b_1=b_2=b_3 = 3$.
There are many mysteries
 still to be explored in likelihood geometry, even within $\PP^3$.
  \hfill $\diamondsuit$
\end{example}

\section{Characteristic Classes}\label{Proofs}

We start by giving an alternative description of the likelihood correspondence which reveals its intimate connection with the theory of Chern classes on possibly noncompact varieties. An important role will be played by the Lie algebra and cotangent bundle of the algebraic  torus 
$(\mathbb{C}^*)^{n+1}$. This section ties our discussion to  the 
work of Aluffi  \cite{AluJSC, AluLectures, Alu} and Huh  \cite{Huh1, Huh0, Huh2}.  In particular, we introduce and explain Chern-Schwartz-MacPherson (CSM) classes. And, most importantly,
we present  proofs for Theorems \ref{thm:finite-to-one},
 \ref{thm:eulerchar},  \ref{PositiveData}, and~\ref{LinearCase}.

 Let $X \subseteq \mathbb{P}^n$ be a closed and irreducible subvariety of dimension $d$, not contained in our distinguished arrangement of $n+2$ hyperplanes,
\[
\sH=\big\{(p_0:p_1:\cdots:p_n) \in \mathbb{P}^n \mid  \, p_0 \cdot p_1 \cdots p_n \cdot p_+=0 \, \},  \qquad p_+= \sum_{i=0}^n p_i.
\]
Let $\varphi_i$ denote the restriction of the rational function $p_i/p_+$ to $X \backslash \sH$.
The  closed embedding
\[
\varphi: X \backslash \sH \longrightarrow (\mathbb{C}^*)^{n+1}, \qquad \varphi=(\varphi_0,\ldots,\varphi_n),
\]
shows that the variety $X \backslash \sH$ is {\em very affine}.
Let $x$ be a smooth point of $X \backslash \sH$. We define 
\begin{equation}
\label{eq:logarithmicGauss}
\gamma_x\,:\, T_x X \,\longrightarrow \,T_{\varphi(x)} (\mathbb{C}^*)^{n+1}
\, \longrightarrow \,\,\mathfrak{g}:=T_{1} (\mathbb{C}^*)^{n+1}
\end{equation}
 to be the derivative of $\varphi$ at $x$ followed by that of  left-translation by $\varphi(x)^{-1}$.
 Here $\mathfrak{g}$ is the Lie algebra of the algebraic torus $(\mathbb{C}^*)^{n+1}$.
In local coordinates $(x_1,\ldots,x_d)$ around the smooth point $x$, the linear map 
$\gamma_x$ is represented by the logarithmic Jacobian matrix
\[
\Bigg(\frac{\partial \log \varphi_i}{\partial x_j}\Bigg), \quad 0 \le i \le n, \quad 1 \le j \le d.
\]

The linear map $\gamma_x$ in (\ref{eq:logarithmicGauss}) is 
injective because $\varphi$ is injective. We
write $q_0,\ldots,q_n$ for the coordinate functions on
the torus $(\mathbb{C}^*)^{n+1}$. These functions define a 
$\mathbb{C}$-linear basis of the dual Lie algebra $\mathfrak{g}^\vee$ corresponding to differential forms
\[
\text{dlog} (q_0), \ldots, \text{dlog} (q_n)\,\, \in\, H^0\Big((\mathbb{C}^*)^{n+1}, \Omega^1_{(\mathbb{C}^*)^{n+1}}\Big) \,\simeq \,\mathfrak{g}^\vee \,\simeq \,\mathbb{C}^{n+1}.
\]
We fix this choice of basis of $\mathfrak{g}^\vee$, and we identify $\mathbb{P}(\mathfrak{g}^\vee)$ with the space of data vectors $\mathbb{P}^n_u$:
\[
\mathfrak{g}^\vee\, \simeq \,\Big\{  \sum_{i=0}^n u_i \cdot {\rm dlog}(q_i) \mid u =(u_0,\ldots,u_n) \in \mathbb{C}^{n+1} \Big\}.
\]
Consider the vector bundle homomorphism defined by the pullback of differential forms 
\begin{equation}
\label{eq:gamma}
\gamma^\vee:  \mathfrak{g}^\vee_{X_{\text{reg}}\backslash \sH}  \longrightarrow \Omega^1_{X_{\text{reg}}\setminus \sH}, \qquad (x,u) \longmapsto \sum_{i=0}^n u_i \cdot \text{dlog}(\varphi_i) (x).
\end{equation}
Here $\mathfrak{g}^\vee_{X_{\text{reg}}\backslash \sH} $ is the trivial vector bundle over $X_{\text{reg}}\backslash \sH$ modeled on the vector space $\mathfrak{g}^\vee$. The induced linear map $\gamma^\vee_x$ between the fibers over a smooth point $x$ is dual to the injective linear map 
$\gamma_x: T_x X \longrightarrow \mathfrak{g}$.
Therefore $\gamma^\vee$ is surjective and $\text{ker}(\gamma^\vee)$ is a vector bundle over $X_{\rm reg} \backslash \sH$. This vector bundle has positive rank $n-d+1$, and hence its projectivization is nonempty.

\begin{proof}[Proof of Theorem \ref{thm:finite-to-one}]
 Under the identification $\mathbb{P}(\mathfrak{g}^\vee) \simeq \mathbb{P}^n_u$, the projective bundle $\mathbb{P}(\ker \gamma^\vee)$ corresponds to the following constructible 
  subset of dimension $n$:
\[
\sL_X \cap\Big( (X_{\rm reg} \backslash  \sH) \times \mathbb{P}^n_u\Big) \,
\subseteq \,\mathbb{P}^n_p \times \mathbb{P}^n_u.
\]
Therefore its Zariski closure $\sL_X$ is irreducible of dimension $n$, and ${\rm pr}_1: \sL_X \to \mathbb{P}^n_p$ is  a projective bundle over $X_{\rm reg} \backslash \sH$. The 
likelihood vibration ${\rm pr}_2:\sL_X \to \mathbb{P}^n_u$ is generically finite-to-one because the domain and the range are algebraic varieties of the same dimension.
\end{proof}

Our next aim is to prove Theorem \ref{PositiveData}. For this we fix
a resolution of singularities
\[
\xymatrix{
\pi^{-1}(X_{\rm reg} \backslash \sH) \ar[r] \ar[d]& \widetilde{X} \ar[d]^\pi &\\
X_{\rm reg} \backslash \sH \ar[r]&X \ar[r]& \mathbb{P}^{n},
}
\]
where $\pi$ is an isomorphism over $X_{\rm reg} \backslash \sH$, 
the variety $\widetilde{X}$ is smooth and projective, and the complement of $\pi^{-1}(X_{\rm reg} \backslash \sH)$ is a simple normal crossing divisor in 
$\widetilde{X}$ with irreducible components
 $D_1,\ldots,D_k$.
Each $\varphi_i$ lifts to a rational function on $\widetilde{X}$ which is regular on $\pi^{-1}(X \backslash \sH)$.
If $u=(u_0,\ldots,u_n)$ is an integer vector in $\mathbb{Z}^{n+1}$, then these functions satisfy
\begin{equation}
\label{eq:orderorder}
{\rm ord}_{D_j}(\ell_u)\quad = \quad \sum_{i=0}^n u_i \cdot {\rm ord}_{D_j} (\varphi_i).
\end{equation}
If $u \in \mathbb{C}^{n+1} \backslash \mathbb{Z}^{n+1}$
 then ${\rm ord}_{D_j}(\ell_u)$ is the complex number defined by the equation (\ref{eq:orderorder}) for
 $j=1,\ldots,k$. We write $H_i:=\{p_i=0\} $ and $H_+:=\{p_+=0\}$ for the
$n+2$ hyperplanes in $\mathcal{H}$.

\begin{lemma}\label{A}
Suppose that $X \cap H_i$ is smooth along $H_+$, and let $D_j$ be a divisor in the boundary of $\widetilde{X}$ such that
$\pi(D_j) \subseteq \mathcal{H}$. 
Then the following three statements hold:
\begin{enumerate}
\item If $\,\pi(D_j) \nsubseteq H_+\,$ then
$\,{\rm ord}_{D_j} (\varphi_i)\, \ \text{is}\ \begin{cases} 
\text{positive} & \text{if $\,\pi(D_j) \subseteq H_i$,}\\ 
\text{zero}& \text{if $\pi(D_j) \nsubseteq H_i$.}
\end{cases}
$
\item If $\,\pi(D_j) \subseteq H_+\,$ then
$-{\rm ord}_{D_j} (\varphi_i)\, \ \text{is}\ \begin{cases} 
\text{positive}& \text{if $\pi(D_j) \nsubseteq H_i$,}\\
\text{nonnegative} & \text{if $\pi(D_j) \subseteq H_i$.}\\ 
\end{cases}
$ 
\item In each of the above two cases, ${\rm ord}_{D_j} (\varphi_i) $ is non-zero for at least one index $i$.
\end{enumerate}
\end{lemma}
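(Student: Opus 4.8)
The plan is to analyze the order of vanishing of each $\varphi_i = p_i/p_+$ along a boundary divisor $D_j$ by reducing everything to local computations on the smooth projective variety $\widetilde X$. Since $\varphi_i$ is the pullback of the rational function $p_i/p_+$, and $D_j$ is an irreducible divisor mapping into $\mathcal{H}$, the key observation is that ${\rm ord}_{D_j}(\varphi_i) = {\rm ord}_{D_j}(p_i) - {\rm ord}_{D_j}(p_+)$, where $p_i$ and $p_+$ are viewed as sections of $\mathcal O_X(1)$ restricted (and pulled back) suitably; more precisely, one fixes a local generator of the line bundle near the generic point of $D_j$ and measures vanishing orders of the numerator and denominator. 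The hyperplane $H_i = \{p_i = 0\}$ contains $\pi(D_j)$ iff $p_i$ vanishes on $\pi(D_j)$, i.e.\ iff ${\rm ord}_{D_j}(p_i) > 0$ (it is $\geq 0$ always since $p_i$ is a regular section); the analogous statement holds for $H_+$ and $p_+$. So I would set $a := {\rm ord}_{D_j}(p_i) \geq 0$ and $b := {\rm ord}_{D_j}(p_+) \geq 0$, so ${\rm ord}_{D_j}(\varphi_i) = a - b$, and the whole lemma becomes bookkeeping on the signs of $a$ and $b$ in the two regimes $\pi(D_j) \nsubseteq H_+$ (i.e.\ $b = 0$) and $\pi(D_j) \subseteq H_+$ (i.e.\ $b > 0$).

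For part (1), when $\pi(D_j) \nsubseteq H_+$ we have $b=0$, so ${\rm ord}_{D_j}(\varphi_i) = a \geq 0$, and this is positive exactly when $\pi(D_j) \subseteq H_i$ (i.e.\ $a > 0$) and zero when $\pi(D_j) \nsubseteq H_i$ (i.e.\ $a=0$) — this is immediate and requires no hypothesis. For part (2), when $\pi(D_j) \subseteq H_+$ we have $b > 0$, so ${\rm ord}_{D_j}(\varphi_i) = a - b$. If $\pi(D_j) \nsubseteq H_i$ then $a = 0$ and $a - b = -b < 0$, so $-{\rm ord}_{D_j}(\varphi_i) = b > 0$: positive, as claimed. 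The remaining subcase, $\pi(D_j) \subseteq H_i \cap H_+$, is exactly where the smoothness hypothesis ``$X \cap H_i$ is smooth along $H_+$'' enters: I need $-{\rm ord}_{D_j}(\varphi_i) = b - a \geq 0$, i.e.\ ${\rm ord}_{D_j}(p_+) \geq {\rm ord}_{D_j}(p_i)$. The idea here is that on a neighborhood of a generic point of $\pi(D_j)$ inside $X \cap H_i$, smoothness of $X \cap H_i$ along $H_+$ means that the intersection $X \cap H_i \cap H_+$ is a Cartier divisor on the smooth variety $X \cap H_i$ cut out by $p_+$, so $p_+$ vanishes to order exactly $1$ along the reduced locus $X \cap H_i \cap H_+$ there; pulling back along $\pi$ and comparing with the (possibly higher) order of $p_i$ along $D_j$ gives ${\rm ord}_{D_j}(p_+) \leq \ldots$ — I must be careful with the direction and instead argue that, because $p_i$ already vanishes on $\pi(D_j)$, locally $p_i$ is a function on the ambient $X$ and $D_j$ lies over $V(p_i) = X \cap H_i$ near the generic point, so the order of $p_i$ pulled back is the order along $D_j$ of (the pullback of) a function on $X \cap H_i$, whereas $p_+$ restricted to $X \cap H_i$ is, by smoothness, part of a local coordinate system and hence has the minimal possible positive order; working out the inequality on $\widetilde X$ with the standard valuation-theoretic estimates yields $b \geq a$. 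This comparison step, making the smoothness hypothesis do precisely the right work, is the main obstacle — everything else is elementary sign-chasing.

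For part (3), I would argue that it is impossible for ${\rm ord}_{D_j}(\varphi_i)$ to vanish for \emph{all} $i = 0, \ldots, n$ simultaneously. Indeed, the product $\varphi_0 \varphi_1 \cdots \varphi_n = p_0 p_1 \cdots p_n / p_+^{n+1}$, and also $\varphi_0 + \varphi_1 + \cdots + \varphi_n = 1$ identically on $X \backslash \sH$; the latter forces that not all $\varphi_i$ can vanish along $D_j$ as elements of the local ring, but I need the stronger statement about orders. In the case $\pi(D_j) \nsubseteq H_+$ (case 1): if all ${\rm ord}_{D_j}(\varphi_i) = 0$ then $\pi(D_j) \nsubseteq H_i$ for every $i$, and combined with $\pi(D_j) \nsubseteq H_+$ this says $\pi(D_j) \nsubseteq \mathcal{H}$, contradicting the hypothesis $\pi(D_j) \subseteq \mathcal{H}$. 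In the case $\pi(D_j) \subseteq H_+$ (case 2): by part (2), ${\rm ord}_{D_j}(\varphi_i) \leq -b < 0$ whenever $\pi(D_j) \nsubseteq H_i$, and such an $i$ must exist — otherwise $\pi(D_j) \subseteq H_i$ for all $i$, and then from $\sum \varphi_i = 1$ one gets a contradiction since the left side would vanish along $D_j$ (each $\varphi_i = p_i/p_+$ with $a_i > 0$ and $b \geq a_i$ by part (2), but then actually $\varphi_i$ is regular and vanishing is not forced — so instead I observe directly that $p_0, \ldots, p_n$ cannot all vanish on any subvariety of $\PP^n$, hence not on $\pi(D_j)$, so some $\pi(D_j) \nsubseteq H_i$). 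Thus in both cases some index $i$ gives ${\rm ord}_{D_j}(\varphi_i) \neq 0$, completing the proof. I expect parts (1) and (3) to be short; the heart of the argument, and the place where I would spend the most care, is the nonnegativity claim in the second line of part (2), which is exactly the reason the smoothness hypothesis appears in the statement of the lemma and, ultimately, in Theorem~\ref{PositiveData}.
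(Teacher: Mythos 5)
Your overall strategy coincides with the paper's: the paper also writes ${\rm ord}_{D_j}(\varphi_i)={\rm ord}_{D_j}(\pi^*(H_i'))-{\rm ord}_{D_j}(\pi^*(H_+'))$, where $H_i'=X\cap H_i$ and $H_+'=X\cap H_+$, observes that the first term is positive exactly when $\pi(D_j)\subseteq H_i$ and zero otherwise (likewise for the second term and $H_+$), and notes that this sign bookkeeping settles part (1) and the first half of part (2). Your part (3) is also the paper's argument, via $\bigcap_i H_i=\emptyset$ in the case $\pi(D_j)\subseteq H_+$; in the other case you should say explicitly that $\pi(D_j)$ is irreducible (being the image of the irreducible $D_j$), so lying in the union $\mathcal{H}$ but not in $H_+$ forces $\pi(D_j)\subseteq H_i$ for some $i$ — a cosmetic point, but it is the reason the paper phrases that bullet the way it does.

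The genuine gap is the one step you flag yourself: the subcase $\pi(D_j)\subseteq H_i\cap H_+$ of part (2), where you need $b\ge a$ in your notation ($a={\rm ord}_{D_j}(\pi^*p_i)$, $b={\rm ord}_{D_j}(\pi^*p_+)$). The heuristic you offer — that $p_+$ restricted to $X\cap H_i$ is part of a local coordinate system and ``hence has the minimal possible positive order'' — is not a valid principle: a divisorial valuation centered at a smooth point can assign order bigger than one to a member of a regular system of parameters (already the weighted blow-up realizing the monomial valuation with weights $(1,2)$ in coordinates $(x,y)$ gives ${\rm ord}(y)=2$), so membership in a coordinate system yields no upper bound on ${\rm ord}_{D_j}$, and the ``standard valuation-theoretic estimates'' you invoke to conclude $b\ge a$ are never exhibited. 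The paper spends the smoothness hypothesis differently, on the numerator rather than the denominator: since the generic point $\eta$ of $\pi(D_j)$ lies in $H_i'\cap H_+'$ and $H_i'$ is smooth along $H_+'$, the local ring $\mathcal{O}_{X,\eta}$ is regular (regularity of the quotient by the single equation $p_i$ forces regularity upstairs), so $\pi(D_j)\subseteq X_{\rm reg}$ and the order of vanishing of $H_i'$ along $\pi(D_j)$ is exactly $1$; the paper then concludes ${\rm ord}_{D_j}(\pi^*(H_i'))=1$ and hence $-{\rm ord}_{D_j}(\varphi_i)={\rm ord}_{D_j}(\pi^*(H_+'))-1\ge 0$, because $p_+$ vanishes along $\pi(D_j)$ and so its pullback has order at least $1$. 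That identification of the order of the pullback of $p_i$ is precisely the ingredient missing from your write-up — and it is where all the content of the hypothesis (and of the lemma, which feeds Lemmas \ref{B}, \ref{C} and Theorem \ref{PositiveData}) sits; without it, the inequality underlying your case (2) is unproved.
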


\begin{proof}
Write $H_i'$ and $H_+'$ for the pullbacks of $H_i$ and $H_+$ to $X$ respectively. Note that $\,{\rm ord}_{D_j} (\pi^*(H_i'))$ is positive if $D_j$ is contained in $\pi^{-1}(H_i')$ and otherwise zero. Since 
\[
{\rm ord}_{D_j} (\varphi_i)\,=\,{\rm ord}_{D_j} (\pi^*(H'_i))\,-\,{\rm ord}_{D_j} (\pi^*(H'_+)),
\]
this proves the first and second assertion, except for the case when $\pi(D_j) \subseteq H_i \cap H_+$. In this case, our assumption that $H_i'$ is smooth along $H_+'$ shows that $\pi(D_j) \subseteq X_{\rm reg}$ and the order of vanishing of $H_i'$ along $\pi(D_j)$ is $1$. Therefore
\[
-{\rm ord}_{D_j} (\varphi_i)\,=\,{\rm ord}_{D_j} (\pi^*(H'_+))-1 \ge 0.
\]
The third assertion of Lemma \ref{A} is derived by the following set-theoretic reasoning:
\begin{itemize}
\item If $\pi(D_j) \nsubseteq H_+$, then $\pi(D_j) \subseteq H_i$ for some $i$ because $\pi(D_j) \subseteq \sH$ is irreducible.
\item If $\pi(D_j) \subseteq H_+$, then $\pi(D_j) \nsubseteq H_i$ for some $i$ because $\bigcap_{i=0}^n H_i = \emptyset$.  
\end{itemize}
\vskip -0.7cm
\end{proof}

From Lemma \ref{A} and equation (\ref{eq:orderorder}) we deduce the following result.
In Lemmas \ref{B} and \ref{C} we retain the hypothesis from Lemma \ref{A}
which coincides with that in Theorem \ref{PositiveData}.

\begin{lemma}\label{B}
If $\pi(D_j) \subseteq \mathcal{H}$ and $u \in \mathbb{R}^{n+1}_{> 0}$ is 
strictly positive, then ${\rm ord}_{D_j}(\ell_u)$ is nonzero.
\end{lemma}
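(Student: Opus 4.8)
The plan is to combine Lemma~\ref{A} with equation~(\ref{eq:orderorder}) and the positivity of $u$. Fix a boundary divisor $D_j$ with $\pi(D_j) \subseteq \mathcal H$. By the third assertion of Lemma~\ref{A}, there is at least one index $i$ with ${\rm ord}_{D_j}(\varphi_i) \neq 0$. The key observation is that, in each of the two cases of Lemma~\ref{A}, \emph{all} the nonzero numbers ${\rm ord}_{D_j}(\varphi_i)$ have the same sign. Indeed, if $\pi(D_j) \nsubseteq H_+$, then by part (1) each ${\rm ord}_{D_j}(\varphi_i)$ is either positive or zero, and at least one is positive. If $\pi(D_j) \subseteq H_+$, then by part (2) each ${\rm ord}_{D_j}(\varphi_i)$ is either negative or nonpositive (i.e.\ $\le 0$), and by part (3) at least one is strictly negative.

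With this in hand, the proof is immediate: since
\[
{\rm ord}_{D_j}(\ell_u) \,\,=\,\, \sum_{i=0}^n u_i \cdot {\rm ord}_{D_j}(\varphi_i)
\]
by~(\ref{eq:orderorder}), and every $u_i > 0$, this is a sum of terms all of which are $\ge 0$ (respectively all $\le 0$), with at least one term strictly positive (respectively strictly negative). Hence the sum is strictly positive (respectively strictly negative), and in particular ${\rm ord}_{D_j}(\ell_u) \neq 0$.

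I do not expect any serious obstacle here; the statement is essentially a bookkeeping consequence of the sign analysis already carried out in Lemma~\ref{A}. The only point requiring a small amount of care is the phrase ``at least one index $i$'' in part (3): one must check that the index witnessing nonvanishing in part (3) is compatible with the sign dichotomy of parts (1)--(2), which it is, precisely because parts (1) and (2) govern \emph{all} indices $i$ simultaneously for the given $D_j$. Writing~(\ref{eq:orderorder}) for a real (rather than integer) vector $u$ is legitimate because, as noted in the text preceding Lemma~\ref{A}, ${\rm ord}_{D_j}(\ell_u)$ is \emph{defined} by that equation for $u \in \mathbb C^{n+1}$.
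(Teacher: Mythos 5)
Your proof is correct and is exactly the argument the paper intends: the paper deduces Lemma \ref{B} directly from Lemma \ref{A} and equation (\ref{eq:orderorder}), and your sign analysis (all ${\rm ord}_{D_j}(\varphi_i)$ nonnegative with one positive when $\pi(D_j) \nsubseteq H_+$, all nonpositive with one negative when $\pi(D_j) \subseteq H_+$, then weight by $u_i > 0$) is precisely the bookkeeping that deduction requires. Your remark that (\ref{eq:orderorder}) holds for real $u$ by definition is also the right justification.
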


Consider the sheaf of logarithmic differential $1$-forms
 $\,\Omega_{\widetilde{X}}^1(\log D)$, where $D$ is the sum of the irreducible components of $\pi^{-1}(\sH)$.
 If $u$ is an integer vector, then the corresponding likelihood function $\ell_u$
 on $\widetilde{X}$ defines a global section of this sheaf:
\begin{equation}
\label{eq:logdifferential}
{\rm dlog}(\ell_u)\,=\,\sum_{i=0}^n u_i \cdot {\rm dlog}(\varphi_i)
\,\, \in \,\,H^0\big(\widetilde{X}, \Omega_{\widetilde{X}}^1(\log D)\big).
\end{equation}
If $u \in \mathbb{C}^{n+1} \backslash \mathbb{Z}^{n+1}$
then we define the global section ${\rm dlog}(\ell_u)$ by the above expression (\ref{eq:logdifferential}).

\begin{lemma}\label{C}
If $u \in \mathbb{R}^{n+1}_{>0}$ is strictly positive, then ${\rm dlog}(\ell_u)$ does not vanish on $\pi^{-1}(\mathcal{H})$.
\end{lemma}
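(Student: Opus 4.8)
\textbf{Proof plan for Lemma \ref{C}.}
The plan is to argue locally along each irreducible component $D_j$ of $\pi^{-1}(\mathcal{H})$. Fix such a $D_j$ and a general point $x \in D_j$; choose local analytic coordinates $(t, x_2, \ldots, x_d)$ on $\widetilde{X}$ near $x$ so that $D_j = \{t = 0\}$. In these coordinates each lifted function $\varphi_i$ has the form $\varphi_i = t^{m_{ij}} \cdot v_i$, where $m_{ij} = {\rm ord}_{D_j}(\varphi_i)$ and $v_i$ is a unit (regular and nonvanishing) near $x$ — here we use that $x$ is general on $D_j$, so $x$ avoids the other boundary components and the locus where any $v_i$ vanishes. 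Then ${\rm dlog}(\varphi_i) = m_{ij}\, \frac{dt}{t} + {\rm dlog}(v_i)$, where ${\rm dlog}(v_i)$ is a regular $1$-form near $x$. Hence
\[
{\rm dlog}(\ell_u) \,=\, \Big(\sum_{i=0}^n u_i m_{ij}\Big) \frac{dt}{t} \,+\, \sum_{i=0}^n u_i\,{\rm dlog}(v_i),
\]
and the coefficient of $\frac{dt}{t}$ is exactly ${\rm ord}_{D_j}(\ell_u)$ by \eqref{eq:orderorder}.

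The key point is then immediate: by Lemma \ref{B}, the hypotheses of Theorem \ref{PositiveData} (which we are assuming, as stated just before Lemma \ref{B}) together with strict positivity of $u$ force ${\rm ord}_{D_j}(\ell_u) \neq 0$. Therefore the $\frac{dt}{t}$-component of ${\rm dlog}(\ell_u)$ is a nonzero scalar multiple of the logarithmic generator $\frac{dt}{t}$ of $\Omega^1_{\widetilde{X}}(\log D)$ along $D_j$, while the remaining term $\sum_i u_i\,{\rm dlog}(v_i)$ is a section of the ordinary (non-logarithmic) cotangent sheaf near $x$. In the natural local splitting of $\Omega^1_{\widetilde{X}}(\log D)$ as $\mathcal{O}_{\widetilde{X}}\cdot\frac{dt}{t} \oplus \Omega^1_{\widetilde{X}}$ adapted to $D_j$, the first coordinate of ${\rm dlog}(\ell_u)$ is a unit at $x$, so ${\rm dlog}(\ell_u)$ does not vanish at $x$. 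Since $x$ was a general point of $D_j$ and non-vanishing is an open condition on the complement of a proper closed subset, ${\rm dlog}(\ell_u)$ vanishes nowhere on $D_j$; as $D_j$ was an arbitrary component of $\pi^{-1}(\mathcal{H})$, we conclude ${\rm dlog}(\ell_u)$ does not vanish on $\pi^{-1}(\mathcal{H})$.

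I expect the only genuine subtlety to be the bookkeeping at points lying in the intersection of two or more boundary components $D_j \cap D_{j'}$ — at such points one must use simultaneous local coordinates $t_j, t_{j'}, \ldots$ with $D_j = \{t_j = 0\}$, and ${\rm dlog}(\ell_u)$ acquires a term $({\rm ord}_{D_j}(\ell_u))\frac{dt_j}{t_j} + ({\rm ord}_{D_{j'}}(\ell_u))\frac{dt_{j'}}{t_{j'}} + \cdots$. Since $\pi^{-1}(\mathcal{H})$ is a simple normal crossing divisor, these logarithmic generators $\frac{dt_j}{t_j}$ are part of a local frame for $\Omega^1_{\widetilde{X}}(\log D)$, so non-vanishing of ${\rm ord}_{D_j}(\ell_u)$ (Lemma \ref{B}) still guarantees non-vanishing of ${\rm dlog}(\ell_u)$ there. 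One small check: Lemma \ref{B} applies to every component $D_j$ with $\pi(D_j) \subseteq \mathcal{H}$, and every component of $\pi^{-1}(\mathcal{H})$ is of this form, so the hypothesis is available wherever it is needed. Thus the argument reduces entirely to Lemma \ref{B} plus the local structure of logarithmic forms along a normal crossing divisor.
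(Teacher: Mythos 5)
Your overall strategy is the paper's: express ${\rm dlog}(\ell_u)$ locally in a frame of $\Omega^1_{\widetilde{X}}(\log D)$ adapted to the normal crossing boundary, observe that the coefficient of the logarithmic generator attached to $D_j$ is ${\rm ord}_{D_j}(\ell_u)$ up to a term vanishing along $D_j$, and invoke Lemma \ref{B}. However, there is one genuinely invalid step: from non-vanishing at a \emph{general} point of $D_j$ you conclude, ``since non-vanishing is an open condition,'' that ${\rm dlog}(\ell_u)$ vanishes nowhere on $D_j$. Openness of the non-vanishing locus only gives non-vanishing in a neighborhood of the chosen general point; it says nothing about the special points of $D_j$ that you excluded at the outset (points where you feared some unit $v_i$ might degenerate, or points lying on boundary components of $\widetilde{X}$ other than the $D_{j'}$, e.g.\ components mapping into $X_{\rm sing}$). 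Your final paragraph repairs the case of points on intersections $D_j \cap D_{j'}$ of components of $D$, but the non-general points of $D_j$ lying on no other component of $D$ are still not covered by any of your cases, so as written the proof does not establish the statement at \emph{every} point of $\pi^{-1}(\mathcal{H})$ --- which is exactly what is needed later, since Lemma \ref{C} is used to confine the whole zero-scheme of the section (and whole fibers ${\rm pr}_2^{-1}(u)$) away from $\pi^{-1}(\mathcal{H})$.

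The gap is easy to close, and the closure is essentially what the paper does: work at an \emph{arbitrary} point $x \in \pi^{-1}(\mathcal{H})$, let $D_{j_1},\ldots,D_{j_l}$ (with $l \ge 1$) be all components of $D$ through $x$ with local equations $g_1,\ldots,g_l$, and write ${\rm dlog}(\ell_u) = \sum_{s} {\rm ord}_{D_{j_s}}(\ell_u)\,{\rm dlog}(g_s) + \psi$ with $\psi$ regular. The only point to justify is that each $\varphi_i$ is, near $x$, a unit times a monomial in $g_1,\ldots,g_l$; this holds because $\varphi_i$ is regular and non-vanishing on $\pi^{-1}(X \setminus \mathcal{H})$, so its divisor on $\widetilde{X}$ is supported on $D$ --- in particular your worry about ``the locus where some $v_i$ vanishes'' is vacuous away from the other components of $D$, and no genericity of $x$ is needed. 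Then the coefficient of ${\rm dlog}(g_s)$ in the frame is ${\rm ord}_{D_{j_s}}(\ell_u)$ plus a function vanishing along $D_{j_s}$, hence nonzero at $x$ by Lemma \ref{B}, and the section is nonzero at $x$. With the argument reorganized this way (dropping the general-point reduction and the openness claim), your proof coincides with the one in the paper.
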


\begin{proof}
Let  $x \in \pi^{-1}(\sH)$ and $D_1,\ldots,D_l$ the irreducible components of $D$ containing $x$,
with local equations $g_1,\ldots,g_l$ on a small neighborhood $G$ of $x$. 
Clearly, $l \geq 1$.
By passing to a smaller neighborhood if necessary, we may assume that $\Omega^1_{\widetilde{X}}(\log D)$ trivializes over $G$, and 
\[
{\rm dlog}(\ell_u) \quad = \quad \sum_{j=1}^l {\rm ord}_{D_j}(\ell_u) \cdot {\rm dlog}(g_j) \,\,+\,\,\psi,
\]
where $\psi$ is a regular $1$-form. Since the ${\rm dlog}(g_j)$ form part of a free basis of a trivialization of $\Omega_{\widetilde{X}}^1(\log D)$ over $G$,  Lemma \ref{B}
implies that ${\rm dlog}(\ell_u)$ is nonzero on $\pi^{-1}(\mathcal{H})$ if $ u \in \mathbb{R}^{n+1}_{>0}$.
\end{proof}

\begin{proof}[Proof of Theorem \ref{thm:eulerchar}]
In the notation above, the logarithmic Poincar\'e-Hopf theorem states 
\[
\int_{\widetilde{X}}c_d\big(\Omega^1_{\widetilde{X}}(\log D)\big)
\,\,= \,\, (-1)^d \cdot \chi\big(\widetilde{X} \backslash \pi^{-1}(\sH)\big).
\]
See \cite[Section 3.4]{AluLectures} for example.
If $X \backslash \sH$ is smooth, then Lemma \ref{C} shows that, for generic $u$, the zero-scheme of the section (\ref{eq:logdifferential}) is equal to the likelihood locus
\[
\big\{x \in X \backslash \sH \mid {\rm dlog}(\ell_u)(x)=0\big\}.
\]
Since the likelihood locus is a zero-dimensional scheme of length equal to the ML degree of $X$, the logarithmic Poincar\'e-Hopf theorem implies Theorem \ref{thm:eulerchar}.
\end{proof}

\begin{proof}[Proof of Theorem \ref{PositiveData}]
Suppose that the likelihood locus
$\,
\{x \in X_{\rm reg} \backslash \sH \mid {\rm dlog}(\ell_u)(x)=0\}
\,$
contains a curve. Let $C$ and $\widetilde{C}$ denote the closures of that curve in $X$ and $\widetilde{X}$ respectively.
Let $\pi^*(\mathcal{H})$ be the pullback of the divisor $\mathcal{H} \cap X$ of $X$. 
If $u \in \mathbb{R}^{n+1}_{>0}$ then Lemma \ref{C} 
implies  that $\, \pi^*(\mathcal{H}) \cdot \widetilde{C}$ is rationally equivalent to zero in $\widetilde{X}$.
It then follows from the Projection Formula that
$\, \mathcal{H} \cdot C$  is also rationally equivalent to zero in $\mathbb{P}^n$.
But this is impossible. Therefore the likelihood locus does not contain a curve. This proves the first part of Theorem \ref{PositiveData}.

For the second part, we first show that ${\rm pr}_2^{-1}(u)$ is contained in $X \backslash \sH$ for a strictly positive vector $u$. This means  there is no 
pair $(x,u) \in \mathcal{L}_X$ with $x \in \sH$ which is a limit of the form
\[
(x,u)=\lim_{t \to 0} \, (x_t,u_t), \qquad x_t \in X_{\rm reg} \backslash \sH, \qquad 
{\rm dlog}(\ell_{u_t})(x_t)=0.
\]
If there is such a sequence $(x_t,u_t)$, then we can take its limit over $\widetilde{X}$ to find a point $\widetilde{x} \in \widetilde{X}$ such that ${\rm dlog}(\ell_{u})(\widetilde{x})=0$, 
but this would contradict Lemma \ref{C}. 

Now suppose that  the fiber ${\rm pr}_2^{-1}(u)$ is contained in $X_{\rm reg}$, and hence in $X_{\rm reg} \backslash \sH$. 
By Theorem \ref{thm:finite-to-one}, this fiber ${\rm pr}^{-1}_2(u)$ is contained the smooth variety $(\sL_X)_{\rm reg}$. Furthermore, by the first part of Theorem \ref{PositiveData}, ${\rm pr}_2^{-1}(u)$ is 
a zero-dimensional subscheme of $(\sL_X)_{\rm reg}$.
 The assertion on the length of the fiber now follows from a standard result on intersection theory on Cohen-Macaulay varieties. More precisely, we have
\begin{eqnarray*}
{\rm MLdegree}(X)\,=\,(U_1 \cdot \ldots \cdot U_n)_{\sL_X}
\,=\,(U_1 \cdot \ldots \cdot U_n)_{(\sL_X)_{\rm reg}}
\,=\,\deg({\rm pr}_2^{-1}(u)),
\end{eqnarray*}
where the $U_i$ are pullbacks of sufficiently general hyperplanes in $\mathbb{P}^n_u$ containing $u$, and the two terms in the middle are the intersection numbers defined in \cite[Definition 2.4.2]{Fulton}. The fact that $(\sL_X)_{\rm reg}$ is Cohen-Macaulay is used in the last equality \cite[Example 2.4.8]{Fulton}.
\end{proof}

\begin{remark} \rm
If $X$ is a curve, then the zero-scheme of the section (\ref{eq:logdifferential}) is zero-dimensional for generic $u$, even if $X \backslash \sH$ is singular. Furthermore,  the length of this zero-scheme is at least as large as ML degree of $X$. Therefore
\[
-\chi(X \backslash \sH) \,\ge\,  -\chi\big(\widetilde{X} \backslash \pi^{-1}(\sH)\big)
\, \ge \,{\rm MLdegree}\,(X).
\]
This proves that Conjecture \ref{conj:eulerpositive} holds for $d=1$.
\end{remark}

Next we give a brief description of  the Chern-Schwartz-MacPherson (CSM) 
class. For a gentle introduction we refer to \cite{AluLectures}.
The group  $C(X)$  of {\em constructible functions} on a complex algebraic variety $X$ is 
a subgroup of the group of integer valued functions on $X$. It is generated by the
characteristic functions $\mathbf{1}_Z$ of all closed subvarieties
$Z$ of $X$.  If $f: X \to Y$ is a morphism between complex algebraic varieties, then the pushforward of constructible functions is the homomorphism
\[
f_* : C(X) \longrightarrow C(Y), \qquad \mathbf{1}_Z \longmapsto \Big( y \longmapsto \chi\big(f^{-1}(y) \cap Z \big), \quad y \in Y \Big).
\]
If $X$ is a compact complex manifold, then the characteristic class of $X$ is the Chern class of the tangent bundle $c(TX) \cap [X] \in H_*(X;\mathbb{Z})$. A generalization to possibly singular or noncompact varieties is provided by the Chern-Schwartz-MacPherson class, whose existence was once a conjecture of Deligne and Grothendieck. 

In the next definition, we write  $C$ for the functor of constructible functions from the category of complete complex algebraic varieties to the category of abelian groups.

\begin{definition}\label{def:CSM} \rm
The \emph{CSM class} is the unique natural transformation
\[
c_{SM} : C \longrightarrow H_*
\]
such that 
$
c_{SM}(\mathbf{1}_X) = c(TX) \cap [X] \in H_*(X;\mathbb{Z})
$ 
when $X$ is smooth and complete.
\end{definition}

The uniqueness follows from the naturality, the resolution of singularities
over $\mathbb{C}$, and the requirement for smooth and complete varieties. 
We highlight two properties of the CSM class  which follow directly from Definition \ref{def:CSM}:
\begin{enumerate}
\item The CSM class satisfies the inclusion-exclusion relation
\begin{equation}\label{eq:inclusionexclusion}
c_{SM}(\mathbf{1}_{U \cup U'})=c_{SM}(\mathbf{1}_{U})+c_{SM}(\mathbf{1}_{U'})-c_{SM}(\mathbf{1}_{U \cap U'}) \in H_*(X;\mathbb{Z}).
\end{equation}
\item The CSM class captures the topological Euler characteristic as its degree:
\begin{equation}\label{eq:chi}
\chi(U) = \int_X c_{SM}(\mathbf{1}_U) \in \mathbb{Z}.
\end{equation}
\end{enumerate}
Here $U$ and $U'$ are arbitrary constructible subsets of a complete variety $X$.

What kind of information on a constructible subset is encoded in its CSM class? 
In likelihood geometry,  $U$ is a constructible subset
in  the complex projective space $\PP^n$, and we identify
$c_{SM}(\mathbf{1}_U) $ with its image in
$\,H_*(\PP^n,\mathbb{Z}) = \mathbb{Z}[p]/\langle p^{n+1} \rangle$.
Thus $c_{SM}(\mathbf{1}_U)$ is a polynomial
of degree $\leq n$ is one variable $p$. To be consistent
with the earlier sections, we introduce a homogenizing variable $u$,
and we write $c_{SM}(\mathbf{1}_U)$ as a binary form 
of degree $n$ in $(p,u)$.

The CSM class of $U$ carries the same information as
the {\em sectional Euler characteristic}
$$ \chi_{\rm sec}(\mathbf{1}_U)
\quad = \quad \sum_{i=0}^n \chi( U \cap L_{n-i}) \cdot p^{n-i} u^i .
$$
Here $L_{n-i}$ is a generic linear subspace
of codimension $i$ in $\PP^n$.
Indeed, it was proved by Aluffi in \cite[Theorem 1.1]{Alu}
that  $c_{SM}(\mathbf{1}_U) $  is the transform of
$ \chi_{\rm sec}(\mathbf{1}_U)$ under a linear involution
on binary forms of degree $n$ in $(p,u)$.
In fact, our involution in Conjecture \ref{conj:aluffi}
is nothing but the signed version of the Aluffi's involution.
This is explained by the following result.

\begin{theorem}\label{proCSM2}
Let $X \subset \PP^n$ be closed subvariety of dimension $d$
that is not contained in $\sH$.
If the very affine variety $X \backslash \sH$ is sch\"on then,
up to signs,
the ML bidegree  equals the CSM class 
and the sectional ML degree equals the sectional Euler characteristic.
In symbols,
$$  c_{SM}(\mathbf{1}_{X\backslash \sH}) 
\,\,=\,\, (-1)^{n-d} \cdot B_X(-p,u) 
\quad \hbox{and} \quad\,
\chi_{\rm sec}(\mathbf{1}_{X\backslash \sH}) 
\,\,=\,\, (-1)^{n-d} \cdot S_X(-p,u) .
$$
\end{theorem}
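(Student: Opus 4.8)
The plan is to reduce the statement to the combination of two facts that are available in the literature and that were foreshadowed in the excerpt: first, the identification (for schön $X \backslash \sH$) of the ML bidegree $B_X(p,u)$ with the Chern--Schwartz--MacPherson class $c_{SM}(\mathbf{1}_{X\backslash\sH})$ up to an explicit sign and homogenization, which is \cite[Theorem 2]{Huh1}; second, Aluffi's log-adjunction formula \cite[Theorem 1.1]{Alu} relating $c_{SM}(\mathbf{1}_U)$ of a locally closed $U\subset\PP^n$ to the sectional Euler characteristic $\chi_{\rm sec}(\mathbf{1}_U)$ by a linear involution on binary forms of degree $n$. Granting these, the content of the theorem is purely bookkeeping: one must check that Aluffi's involution, after the sign substitution $p\mapsto -p$, is exactly the involution of Conjecture \ref{conj:aluffi}, and that the sectional ML degree $S_X(p,u)$ matches $\chi_{\rm sec}(\mathbf{1}_{X\backslash\sH})$ under the same sign dictionary.

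First I would recall the schön hypothesis and use it to guarantee, via \cite[Theorem 2]{Huh1}, that the likelihood correspondence $\sL_X$ carries the CSM class of $\mathbf{1}_{X\backslash\sH}$ in the precise normalized form
\[
c_{SM}(\mathbf{1}_{X\backslash\sH}) \,=\, (-1)^{n-d}\,B_X(-p,u)\quad\text{in }\;\mathbb{Z}[p]/\langle p^{n+1}\rangle,
\]
where $u$ is the homogenizing variable. This is the first displayed equality and needs essentially nothing beyond citing that theorem and matching conventions (degrees of the graded pieces, the sign $(-1)^{n-d}$ coming from the codimension of $X$, and the fact that the lowest-degree term of $B_X$ is $\deg(X)\,p^{n-d}u^{d}$, consistent with the top-dimensional term $[\overline{X}]$ of $c_{SM}$).

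Next I would establish the second equality. Here I would use that the sectional ML degree $s_i = \mathrm{MLdegree}(X\cap L_{n-i})$ can be computed by Theorem \ref{thm:eulerchar} applied to the generic linear section: since $X\backslash\sH$ is schön, so is a generic hyperplane section $X\cap L_{n-i}$ relative to its own distinguished arrangement, hence $s_i = (-1)^{\dim(X\cap L_{n-i})}\chi\big((X\cap L_{n-i})\backslash\sH\big) = (-1)^{d-i}\chi\big((X\backslash\sH)\cap L_{n-i}\big)$. Plugging this into the definitions of $S_X(p,u)$ and of $\chi_{\rm sec}(\mathbf{1}_{X\backslash\sH})$ and comparing coefficients of $p^{n-i}u^{i}$ gives immediately $\chi_{\rm sec}(\mathbf{1}_{X\backslash\sH}) = (-1)^{n-d}S_X(-p,u)$, since $(-1)^{d-i}\cdot(-1)^{n-i}\cdot(-1)^{2i}=(-1)^{n-d}$ after the substitution $p\mapsto -p$. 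Finally, to close the loop with Aluffi's theorem, I would check consistency: \cite[Theorem 1.1]{Alu} transforms $c_{SM}$ into $\chi_{\rm sec}$ by a fixed linear involution; substituting the two sign-twisted identities just obtained shows that this Aluffi involution is carried, under $p\mapsto -p$, to precisely the pair of formulas in Conjecture \ref{conj:aluffi}, which is the promised ``signed version.''

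The main obstacle is bookkeeping discipline rather than a genuine difficulty: one must be scrupulous about three different sign/homogenization conventions — Huh's normalization of the CSM class of a very affine variety, Aluffi's form of the involution, and the conventions of Sections 1--3 for $B_X$ and $S_X$ — and verify that they are mutually compatible on the nose, including the edge cases $i=0$ (leading term = ML degree) and $i=d$ (trailing term = degree). A secondary point requiring care is that $X\backslash\sH$ schön does not automatically imply $(X\cap L_{n-i})\backslash\sH$ is schön for all generic $L_{n-i}$; this should follow because a schön compactification restricts well to generic hyperplane sections, but I would either cite the relevant stability statement for schön very affine varieties or give a short transversality argument using the tropicalization, so that Theorem \ref{thm:eulerchar} genuinely applies to every generic section.
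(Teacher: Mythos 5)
Your proposal is correct, and the first identity is handled exactly as in the paper: both arguments simply invoke \cite[Theorem 2]{Huh1} (the sch\"on hypothesis is what makes that theorem applicable here) and match the sign and homogenization conventions. For the second identity your route differs from the paper's. The paper disposes of it in one line, saying it follows from the first identity ``by way of \cite[Theorem 1.1]{Alu}'', i.e.\ by applying Aluffi's involution to the CSM class; read literally, that step presupposes the relation between $B_X$ and $S_X$ asserted in Conjecture \ref{conj:aluffi}, so the terse argument leaves the substantive content implicit. You instead prove the second identity directly and coefficientwise: $s_i={\rm MLdegree}(X\cap L_{n-i})$ equals $(-1)^{d-i}\chi\bigl((X\backslash\sH)\cap L_{n-i}\bigr)$ by Theorem \ref{thm:eulerchar} applied to the generic linear section, and the sign bookkeeping then yields $\chi_{\rm sec}(\mathbf{1}_{X\backslash\sH})=(-1)^{n-d}S_X(-p,u)$; the comparison with Aluffi's involution becomes a consistency check (it is what gives Conjecture \ref{conj:aluffi} in the sch\"on case, not what the theorem itself needs). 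This buys a self-contained argument with no appearance of circularity, at the cost of the one extra verification you flag, namely that Theorem \ref{thm:eulerchar} applies to every generic section. There your worry about sch\"on-ness of $X\cap L_{n-i}$ is unnecessary: sch\"on implies that $X\backslash\sH$ is smooth, so Bertini gives smoothness (and, for $i<d$, irreducibility) of the generic section off $\sH$, which is all that Theorem \ref{thm:eulerchar} requires; the extreme case $i=d$ is just the count of $\deg X$ reduced points.
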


\begin{proof}
The first identity is a special case of
\cite[Theorem 2]{Huh1}, 
here adapted to $\PP^n$ minus $n+2$ hyperplanes, and the second identity follows from
the first by way of \cite[Theorem 1.1]{Alu}.
\end{proof}

To make sense of the statement in
Theorem \ref{proCSM2}, we need to recall the definition of {\em sch\"on}.
This term was coined by Tevelev in his study of
tropical compactifications \cite{Tevelev}. Let $U$ be an arbitrary closed 
subvariety of the algebraic torus  $(\mathbb{C}^*)^{n+1}$. In our application,
$U = X \backslash \mathcal{H}$.
We consider the closures $\overline{U}$ of $U$ in various (not necessarily complete) normal toric varieties $Y$ with dense torus
$(\mathbb{C}^*)^{n+1}$.
 The closure $\overline{U}$ is complete if and only if the support of the fan of $Y$ contains the tropicalization of $U$ \cite[Proposition 2.3]{Tevelev}. We say that $\overline{U}$ is a \emph{tropical compactification} of $U$ if it is complete and the multiplication map
\[
m\,:\,(\mathbb{C}^*)^{n+1}
 \times \overline{U} \longrightarrow Y, \quad (t,x) \longmapsto t \cdot x
\]
is flat and surjective. Tropical compactifications exist, and they are obtained from toric varieties $Y$ defined by sufficiently fine fan structures on the tropicalization of $U$ \cite[\S 2]{Tevelev}. 
The very affine variety $U$ is called
 \emph{sch\"on} if the multiplication is smooth for some tropical compactification of $U$.
Equivalently, $U$ is sch\"on if the multiplication is smooth for every tropical compactification of $U$,
by \cite[Theorem 1.4]{Tevelev}.

Two classes of sch\"on very affine varieties  are of particular interest. The first is the class of complements of essential hyperplane arrangements.
The second is the class of nondegenerate hypersurfaces.
What we need from the sch\"on hypothesis is the existence of a simple normal crossings compactification which admits sufficiently many differential one-forms which have logarithmic singularities along the boundary. For complements of hyperplane arrangements, such a compactification is provided by the wonderful compactification of De Concini and Procesi \cite{DP}. For nondegenerate hypersurfaces, and more generally for nondegenerate complete intersections, the needed compactification has been constructed by Khovanskii \cite{Hovanskii}.

We illustrate this in the setting of likelihood geometry by a $d$-dimensional linear subspace of $X \subset \mathbb{P}^n$.
The intersection of $X$ with distinguished hyperplanes $\sH$ of $\mathbb{P}^n$ is an arrangement of $n+2$ hyperplanes in $X \simeq \mathbb{P}^d$, defining a matroid $M$ of rank $d+1$ on $n+2$ elements.

\begin{proposition}\label{characteristic}
If $X$ is a linear space of dimension $d$ then
the CSM class of $X \backslash \sH$ in $\mathbb{P}^n$~is 
\[
c_{SM}(\mathbf{1}_{X \backslash \sH})\,\,=\,\,\sum_{i=0}^{d} (-1)^i h_i  u^{d-i} p^{n-d+i}.
\]
where the $h_i$ are the signed coefficients of the shifted characteristic polynomial in
(\ref{eq:chih}).
\end{proposition}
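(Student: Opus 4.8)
The quickest route is to combine the two results just proved. Since $X$ is a linear subspace, the very affine variety $X \backslash \sH$ is the complement of an arrangement of $n+2$ hyperplanes inside $X \simeq \PP^d$; passing to the essentialization (projecting away the common intersection of these hyperplanes, which changes neither the associated matroid $M$ nor the complement up to isomorphism) we may assume the arrangement is essential, and then $X \backslash \sH$ is sch\"on --- this is one of the two standard families of sch\"on very affine varieties recalled above, with the De Concini--Procesi wonderful compactification supplying the required simple normal crossings compactification with enough logarithmic one-forms. Hence Theorem \ref{proCSM2} applies and yields $c_{SM}(\mathbf{1}_{X\backslash\sH}) = (-1)^{n-d}\,B_X(-p,u)$. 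By Theorem \ref{LinearCase} and (\ref{eq:linearBidegree}) we have $B_X(p,u) = \big(\sum_{i=0}^d h_i\, p^i u^{d-i}\big)\,p^{n-d}$, so
$$
(-1)^{n-d}B_X(-p,u) \;=\; (-1)^{n-d}\Big(\sum_{i=0}^d h_i (-p)^{n-d+i} u^{d-i}\Big) \;=\; \sum_{i=0}^d (-1)^i h_i\, u^{d-i} p^{n-d+i},
$$
which is the assertion. Modulo the verification of sch\"onness, the statement is then pure bookkeeping.

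\textbf{A self-contained variant.} It is worth recording how to prove Proposition \ref{characteristic} directly, without invoking Theorem \ref{LinearCase}; combined with Theorem \ref{proCSM2} this then \emph{re-proves} the bidegree formula of Theorem \ref{LinearCase}. By naturality of $c_{SM}$ under the closed embedding $X \hookrightarrow \PP^n$, it suffices to compute $c_{SM}(\mathbf{1}_{X\backslash\sH})$ as a class in $H_*(X;\mathbb{Z}) = \mathbb{Z}[h]/\langle h^{d+1}\rangle$, the pushforward to $\PP^n$ being then multiplication by $p^{n-d}$. Write $M$ for the rank-$(d{+}1)$ matroid on the $n+2$ elements defined by $X \cap \sH$, let $L_F \subseteq X$ be the linear subspace of dimension $d - \mathrm{rk}(F)$ cut out by a flat $F$, and apply M\"obius inversion over the lattice of flats, $\mathbf{1}_{X\backslash\sH} = \sum_{F}\mu(\hat 0,F)\,\mathbf{1}_{L_F}$. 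Applying the additive map $c_{SM}$ and using $c_{SM}(\mathbf{1}_{\PP^k}) = (1+h)^{k+1}\cap[\PP^k]$ for a linear $\PP^k \subseteq X$ gives
$$
c_{SM}(\mathbf{1}_{X\backslash\sH}) \;=\; \sum_{r=0}^{d+1}\Big(\sum_{\mathrm{rk}(F)=r}\mu(\hat 0,F)\Big)h^{r}(1+h)^{d+1-r} \;=\; h^{d}\,\overline{\chi}_M\!\big(\tfrac{1+h}{h}\big)\ \in\ \mathbb{Z}[h]/\langle h^{d+1}\rangle,
$$
where $\overline{\chi}_M(q) = \chi_M(q)/(q-1)$ is the reduced characteristic polynomial; the middle equality packages the inner sums as Whitney numbers of the first kind, and the factor $(q-1)$ is cancelled because $1$ is a root of $\chi_M$. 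Substituting $q = 1/h$ in $\overline{\chi}_M(q+1)$ and reading off the coefficients via (\ref{eq:chih}) turns the right-hand side into $\sum_{i=0}^d (-1)^i h_i\, h^i$, which is the claimed binary form. Additivity of $c_{SM}$ here is formula (\ref{eq:inclusionexclusion}).

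\textbf{Main obstacle.} In the first route the only nontrivial point is the sch\"onness check: one must produce a logarithmic compactification of the arrangement complement (the wonderful compactification, or the Khovanskii compactification for the broader classes) so that Theorem \ref{proCSM2} is licensed, and remember to essentialize when some of the $n+2$ hyperplanes coincide. In the self-contained route the delicate step is the last one: matching the generating-function identity $\sum_r w_r\, h^r(1+h)^{d+1-r} = h^d\,\overline{\chi}_M((1+h)/h)$ with the precise normalization and signs of the $h_i$ fixed by (\ref{eq:chih}), while keeping track of the truncation modulo $h^{d+1}$. I expect this to be the part that needs the most care, although it is entirely formal.
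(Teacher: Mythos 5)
Your first route is circular relative to this paper's logical structure: here Theorem \ref{LinearCase} is not proved independently and then available for use --- on the contrary, its proof in Section \ref{Proofs} consists precisely of combining Theorem \ref{proCSM2} with Proposition \ref{characteristic}, so quoting the bidegree formula (\ref{eq:linearBidegree}) to deduce the Proposition inverts the intended deduction. (The bidegree formula does have an independent proof in the literature, e.g.\ Denham--Garrousian--Schulze, but within this paper you cannot lean on Theorem \ref{LinearCase} here; also note that sch\"onness is not needed for the Proposition itself, which is a purely combinatorial CSM computation --- it enters only when one passes from the CSM class to the ML bidegree.)

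Your self-contained variant, however, is a correct and non-circular proof, and it plays exactly the role the Proposition plays in the paper: compute $c_{SM}(\mathbf{1}_{X\backslash\sH})$ combinatorially, then feed it into Theorem \ref{proCSM2} to get Theorem \ref{LinearCase}. The difference is in how the combinatorics is organized. The paper argues by induction on the number of hyperplanes: additivity of $c_{SM}$ gives $c_{SM}(\mathbf{1}_{U_1})=c_{SM}(\mathbf{1}_{U})-c_{SM}(\mathbf{1}_{U_0})$ for a deletion--restriction triple, which matches the recursion $\chi_{M_1}(q+1)=\chi_M(q+1)-\chi_{M_0}(q+1)$, so the formula propagates from the base case. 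You instead use M\"obius inversion over the lattice of flats, $\mathbf{1}_{X\backslash\sH}=\sum_F \mu(\hat 0,F)\mathbf{1}_{L_F}$, together with $c_{SM}(\mathbf{1}_{L_F})=h^{\operatorname{rk}F}(1+h)^{d+1-\operatorname{rk}F}$ in $\mathbb{Z}[h]/\langle h^{d+1}\rangle$, obtaining the closed form $h^{d}\,\overline{\chi}_M\bigl(\tfrac{1+h}{h}\bigr)$ in one stroke rather than by recursion; the top-degree truncation is harmless since $\sum_r w_r=\chi_M(1)=0$. Your final bookkeeping is also right, provided (\ref{eq:chih}) is read as $\overline{\chi}_M(q+1)=h_0q^{d}-h_1q^{d-1}+\cdots+(-1)^{d}h_d$ (the trailing coefficients as displayed in (\ref{eq:chih}) contain a typo; the example of the uniform matroid with $h=(1,3,6)$ confirms this reading), whereupon $q=1/h$ gives $\sum_{i=0}^d(-1)^ih_ih^i$ and pushforward along $X\simeq\PP^d\hookrightarrow\PP^n$ multiplies by $p^{n-d}$, as you say. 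So: drop the first route, keep the second.
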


\begin{proof} 
This holds because the recursive formula for a triple of arrangement complements
\[
c_{SM}(\mathbf{1}_{U_1})\,=\,
c_{SM}(\mathbf{1}_{U}-\mathbf{1}_{U_0})\,=\,c_{SM}(\mathbf{1}_{U})-c_{SM}(\mathbf{1}_{U_0}),
\]
agrees with the usual deletion-restriction formula \cite[Theorem 2.56]{OTBook}:
\[
\chi_{M_1}(q+1)\,\,= \,\,\chi_M(q+1)-\chi_{M_0}(q+1).
\]
Here our notation is as in \cite[\S 3]{Huh1}.
We now use induction on the number of hyperplanes.
\end{proof}

\begin{proof}[Proof of Theorem \ref{LinearCase}]
The very affine variety  $X \backslash \sH$ is sch\"on when $X$ is linear.
Hence the asserted formula 
for the ML bidegree of $X$ follows from
Theorem \ref{proCSM2} and Proposition \ref{characteristic}.
\end{proof}

Rank constraints on matrices
are important both in statistics and
in algebraic geometry, and they provide a rich source
of test cases for the theory developed here.
We close our discussion with the enumerative invariants
of three hypersurfaces defined by $3 \times 3$-determinants.
It would be very interesting to 
compute these formulas for  larger
determinantal varieties.

\begin{example} \label{ex:manyclasses} \rm
We record the {\em ML bidegree}, the
{\em CSM class},
the {\em sectional ML degree}, and the {\em sectional Euler characteristic}
for three singular hypersurfaces seen earlier in this paper.
These examples were studied already in \cite{HKS}. The classes we present
are elements of $H^*(\mathbb{P}^n_p \times \mathbb{P}^n_u)$ and of $H^*(\mathbb{P}^n_p;\mathbb{Z})$ respectively, and they are written as binary forms in $(p,u)$ as before.
\begin{itemize}
\item The $3 \times 3$ determinantal hypersurface in $\mathbb{P}^8$ (Example \ref{ex:33determinant}) has
\begin{eqnarray*}
B_X(p,u)&=& \phantom{-}10p^8+24p^7u+33p^6u^2+38p^5u^3+39p^4u^4+33p^3u^5+12p^2u^6+3pu^7,\\
c_{SM}(\mathbf{1}_{X \backslash \sH})&=&-11p^8+26p^7u-37p^6u^2+44p^5u^3-45p^4u^4+33p^3u^5-12p^2u^6+3pu^7, \\
S_X(p,u)&=&\!\! \phantom{-}11p^8+ 182 p^7u+ 436 p^6u^2{+} 518 p^5u^3{+} 351 p^4u^4{+} 138 p^3u^5
{+} 30 p^2u^6{+}3pu^7,\\
\chi_{\rm sec}(\mathbf{1}_{X \backslash\sH})&=& \!\!
-11p^8+ 200 p^7u- 470 p^6u^2{+} 542 p^5u^3{-} 357 p^4u^4{+} 138 p^3u^5{-} 30 p^2u^6{+}3pu^7.
\end{eqnarray*}
\item The $3 \times 3$ symmetric determinantal hypersurface in $\mathbb{P}^5$ (Example \ref{ex:galois33}) has
\begin{eqnarray*}
B_X(p,u) &=&   6 p^5+12 p^4 u+15 p^3 u^2+12 p^2 u^3+3 p u^4,\\
c_{SM}(\mathbf{1}_{X \backslash \sH}) &=& 7 p^5-14 p^4 u+19 p^3 u^2-12 p^2 u^3+3 p u^4, \\
S_X(p,u) &=&   6 p^5+ 42 p^4 u+48 p^3 u^2+ 21 p^2 u^3+3 p u^4,\\
\chi_{\rm sec}(\mathbf{1}_{X \backslash \sH}) &=& 
7 p^5- 48 p^4 u+52  p^3 u^2-21 p^2 u^3+3 p u^4.
\end{eqnarray*}
\item The secant variety of the rational normal curve in $\mathbb{P}^4$ (Example \ref{ex:secant}) has
\begin{eqnarray*}
B_X(p,u) &=& 
\phantom{-}12 p^4 + 15 p^3 u + 12 p^2 u^2 + 3 p u^3,\\
c_{SM}(\mathbf{1}_{X \backslash \sH}) & = &
-13 p^4 + 19 p^3 u - 12 p^2 u^2 + 3 p u^3, \\
S_X(p,u) &=& \phantom{-} 12 p^4 + 30 p^3 u + 18 p^2 u^2 + 3 p u^3,\\
\chi_{\rm sec}(\mathbf{1}_{X \backslash \sH}) & = &
- 13 p^4 + 34 p^3 u - 18 p^2 u^2 + 3 p u^3 .
\end{eqnarray*}
\end{itemize}
In all known examples, the coefficients of $B_X(p,u)$ are less than or equal to  the absolute value of the corresponding coefficients of $c_{SM}(\mathbf{1}_{X \backslash \sH})$,
and similary for  $S_X(p,u)$ and $\chi_{\rm sec}(\mathbf{1}_{X \backslash \sH})$.
 That this inequality holds for the first coefficient is Conjecture \ref{conj:eulerpositive} 
 which relates the  ML degree of a singular $X$ to the
  signed Euler characteristic of the very affine variety $X \backslash \mathcal{H}$.
 \hfill $\diamondsuit$
\end{example}

\medskip

\noindent {\bf Acknowledgments}:
We thank Paolo Aluffi and Sam Payne for helpful communications,
and the Mathematics Department at KAIST, Daejeon,
for hosting both authors in May 2013.
Bernd Sturmfels was supported by 
NSF (DMS-0968882) and DARPA (HR0011-12-1-0011).

\medskip

\bigskip
\bigskip
\bigskip

\noindent
Authors' addresses:

\medskip

\noindent
June Huh,
Department of Mathematics, University of Michigan, Ann Arbor,
MI 48109, USA, {\tt {junehuh@umich.edu}}

\medskip

\noindent
Bernd Sturmfels,
Department of Mathematics, University of California, Berkeley,  CA 94720, USA,
 {\tt {bernd@berkeley.edu}}


\begin{thebibliography}{10}

\bibitem{ARSZ}
E.~Allmann, J.~Rhodes, B.~Sturmfels and P.~Zwiernik:
{\em Tensors of nonnegative rank two},
Linear Algebra and its Applications, Special Issue on Statistics,
to appear.


			
\bibitem{AluJSC}
P.~Aluffi: {\em  Computing characteristic classes of projective schemes},
Journal of Symbolic Computation {\bf 35} (2003) 3--19. 

\bibitem{AluLectures} P.~Aluffi:
			\emph{Characteristic classes of singular varieties}, 
			Topics in Cohomological Studies of Algebraic Varieties, 1--32, 
			Trends in Mathematics, Birkh\"auser, Basel, 2005. 

			 			
\bibitem{Alu}
P.~Aluffi: {\em Euler characteristics of general linear sections and
polynomial Chern classes},  Rend. Circ. Mat. Palermo. {\bf 62} (2013)  3--26.

\bibitem{AN} S.~Amari and H.~Nagaoka: 
{\em Methods of Information Geometry},
Translations of Mathematical Monographs {\bf 191}, American Math.~Society, 2000.

\bibitem{Bertini}
D.J.~Bates, J.D.~Hauenstein, A.J.~Sommese, and C.W. Wampler:
{\em  Bertini: Software for Numerical Algebraic Geometry},
{\tt www.nd.edu/$\sim$sommese/bertini}, 2006.

\bibitem{BHSW}
D.J.~Bates, J.D.~Hauenstein, A.J.~Sommese, and C.W.~Wampler:
{\em Numerically Solving Polynomial Systems with the Software Package Bertini},
to be published by SIAM, 2013.

\bibitem{BFH}
Y.~Bishop, S.~Fienberg, and P.~Holland:
{\em Discrete Multivariate Analysis: Theory and Practice},
 Springer, New York, 1975.

\bibitem{BoydVan} S.~Boyd and L.~Vandenberghe:
{\em Convex Optimization},
Cambridge University Press,  2004.

\bibitem{CHKS}
F.~Catanese, S.~Ho\c sten, A. Khetan and B.~Sturmfels: {\em The maximum likelihood degree}, American Journal of Mathematics {\bf 128} (2006)  671--697.

\bibitem{CDFV} D.~Cohen, G.~Denham, M.~Falk, and A.~Varchenko:
			\emph{Critical points and resonance of hyperplane arrangements},
			Canadian Journal of Mathematics {\bf 63} (2011) 1038--1057.

\bibitem{DP} C.~De Concini and C.~Procesi:
			\emph{Wonderful models of subspace arrangements},
			Selecta Mathematica. New Series {\bf 1} (1995) 459--494.
						
\bibitem{Denham-Garrousian-Schulze} G.~Denham, M.~Garrousian, and M.~Schulze:
			\emph{A geometric deletion-restriction formula},
			Advances in Mathematics {\bf 230} (2012) 1979--1994.
			
\bibitem{DR}
J.~Draisma and J.~Rodriguez:
{\em Maximum likelihood duality for determinantal varieties},
to appear in International Mathematics Research Notices,
 {\tt arXiv:1211.3196}.

 \bibitem{LiAS}
M.~Drton, B.~Sturmfels and S.~Sullivant: {\em Lectures on Algebraic Statistics},
Oberwolfach Seminars, Vol 39, Birkh\"auser, Basel, 2009.

\bibitem{Franecki-Kapranov} J.~Franecki and M.~Kapranov,
			\emph{The Gauss map and a noncompact Riemann-Roch formula for constructible sheaves on semiabelian varieties},
			Duke Math. J. {\bf 104} (2000) 171--180. 
			
\bibitem{Fulton} W.~Fulton:
			\emph{Intersection Theory}, 
			Second edition. Ergebnisse der Mathematik und ihrer Grenzgebiete. 
			A Series of Modern Surveys in Mathematics {\bf 2}, Springer, Berlin, 1998.

\bibitem{Gabber-Loeser} O.~Gabber and F.~Loeser,
			\emph{Faisceaux pervers l-adiques sur un tore},
			Duke Math. J. {\bf 83} (1996) 501--606. 
						
\bibitem{GKZ} I.M.~Gel'fand, M.~Kapranov and A.~Zelevinsky:
{\em Discriminants, Resultants, and Multidimensional Determinants},
Birkh\"auser, Boston, 1994.

\bibitem{GR} E.~Gross and J.~Rodriguez:
\emph{Maximum likelihood geometry in the presence of structural and sampling zeros},
in preparation.

\bibitem{Hacking} P.~Hacking:
			\emph{Homology of tropical varieties}, 
			Collectanea Mathem.~{\bf 59} (2008) 263--273.
			
\bibitem{HRS} J.~Hauenstein, J.~Rodriguez and B.~Sturmfels:
{\em Maximum likelihood for matrices with rank constraints},
 {\tt arXiv:1210.0198}.
 

\bibitem{HKS}  S. Ho\c sten, A. Khetan and B.~Sturmfels:
{\em Solving the likelihood equations}, Foundations of Computational Mathematics
{\bf 5} (2005) 389--407.

\bibitem{Hovanskii} A.~Hovanski{\u \i}:
			\emph{Newton polyhedra and toroidal varieties},
			Akademija Nauk SSSR. Funkcional'nyi Analiz i ego Prilo\v zenija {\bf 11} (1977) 56--64. 
			
\bibitem{Huh1} J.~Huh: {\em The maximum likelihood degree of a very affine variety},
Compositio Math. {\bf 149} (2013) 1245--1266.

\bibitem{Huh0} J.~Huh:
			\emph{$h$-vectors of matroids and logarithmic concavity}, 
			preprint, \texttt{arXiv:1201.2915}.
			
\bibitem{Huh2} J.~Huh: {\em Discriminants, Horn uniformization, and varieties with 
maximum likelihood degree one}, {\tt arXiv:1301.2732}.


\bibitem{Kapranov} M.~Kapranov:
\emph{A characterization of A-discriminantal hypersurfaces in terms of the logarithmic Gauss map}, 
Math. Annalen {\bf 290} (1991) 277--285. 

\bibitem{KRS}
K.~Kubjas, E.~Robeva and B.~Sturmfels:
Nonnegative matrix rank and the EM algorithm,
in preparation.
			
\bibitem{Land}  J.M.~Landsberg:
{\em Tensors: Geometry and Applications}, Graduate Studies in Mathematics, 128.
 American Mathematical Society, Providence, RI, 2012.

\bibitem{LM}  J.M.~Landsberg and L.~Manivel: {\em On ideals of secant varieties of                                                                        
Segre varieties}, Found.~Comput.~Math. {\bf 4} (2004) 397--422.

\bibitem{Lau} S.~Lauritzen: {\em Graphical Models}, Oxford University Press, 1996.


\bibitem{MS} E.~Miller and B.~Sturmfels:
{\em Combinatorial Commutative Algebra},
Graduate Texts in Mathematics, {\bf 227},
Springer, New York, 2004.

\bibitem{MSS}
D.~Mond, J.~Smith, and D.~van Straten:
{\em Stochastic factorizations, sandwiched simplices and the topology of the space of explanations},
R.~Soc.~Lond.~Proc. Ser.~A Math.~Phys.~Eng.~Sci.~{\bf 459} (2003) 2821--2845.



\bibitem{OTBook} P.~Orlik and H.~Terao:
			\emph{Arrangements of Hyperplanes}, 
			Grundlehren der Mathematischen Wissenschaften {\bf 300}, Springer-Verlag, Berlin, 1992.
			
\bibitem{Orlik-Terao} P.~Orlik and H.~Terao:
			\emph{The number of critical points of a product of powers of linear functions}, 
			Inventiones Mathematicae {\bf 120} (1995) 1--14.
			
\bibitem{PS}
L.~Pachter and B.~Sturmfels:
{\em Algebraic Statistics for Computational Biology},
Cambridge University Press, 2005.

\bibitem{Rai}
C.~Raicu: {\em  Secant varieties of Segre--Veronese varieties},
Algebra and Number Theory
{\bf 6} (2012) 1817--1868.

\bibitem{Rapallo}
 F.~Rapallo: {\em Markov bases and structural zeros},
 J. Symbolic Comput. {\bf 41} (2006) 164--172.

\bibitem{SSV} R.~Sanyal, B.~Sturmfels
and C.~Vinzant: {\em The entropic discriminant},
Advances in Mathematics {\bf 244} (2013) 678--707. 

\bibitem{Terao} H.~Terao, {\em Generalized exponents of a free arrangement of hyperplanes and the Shepherd-Todd-Brieskorn formula}, 
Invent. Math. {\bf 63} (1981) 159--179.

\bibitem{Tevelev} J.~Tevelev:
			\emph{Compactifications of subvarieties of tori}, 
			American Journal of Mathematics {\bf 129} (2007) 1087--1104.
			
\bibitem{Uhl} C.~Uhler: 
{\em  Geometry of maximum likelihood estimation in Gaussian graphical models}, 
Annals of Statistics {\bf 40} (2012) 238--261.

\bibitem{Varchenko} A.~Varchenko:
\emph{Critical points of the product of powers of linear functions and families of bases of singular vectors}, Compositio Math. {\bf 97} (1995) 385--401.

\bibitem{Wat} S.~Watanabe: {\em Algebraic Geometry and Statistical Learning Theory},
Monographs on Applied and Computational Mathematics {\bf 25}, Cambridge University
Press, 2009.

\end{thebibliography}
\end{document}